\newtheorem{thm}{Theorem}
\newtheorem{prop}[thm]{Proposition}
\newtheorem{lem}[thm]{Lemma}
\newtheorem{sublem}[thm]{Sublemma}
\newtheorem{cor}[thm]{Corollary}
\newtheorem{con}[thm]{Conjecture}
\theoremstyle{remark}
\newtheorem{rem}[thm]{Remark}
\theoremstyle{definition}
\newcommand{\C}{\mathbb C}
\newcommand{\R}{\mathbb R}
\newcommand{\Z}{\mathbb Z}
\newcommand{\End}{\mathrm{End}}
\newcommand{\E}{\mathrm{e}}
\newcommand{\I}{\mathrm{i}}
\newcommand{\QDL}{{\mathrm{QDL}}}
\newcommand{\li}{\operatorname{\mathrm{li}_2}}
\newcommand{\lih}{\operatorname{\mathrm{li}_2^\hbar}}
\newcommand{\Lih}{\operatorname{\mathrm{Li}_2^\hbar}}
\newcommand{\lin}{\operatorname{\mathrm{li}_2^{\frac2n}}}
\newcommand{\Lin}{\operatorname{\mathrm{Li}_2^{\frac2n}}}
\newcommand{\Tr}{\operatorname{\mathrm{Trace}}}
\newcommand{\vol}{\operatorname{\mathrm{vol}}}
\newcommand{\SL}{\mathrm{SL}_2(\C)}
\newcommand{\PSL}{\mathrm{PSL}_2(\C)}
\newcommand{\SSS}{\mathcal{K}^q}
\newcommand{\XX}{\mathcal X_{\SL}}
\newcommand{\XP}{\mathcal X_{\PSL}}
\newcommand{\dbar}{/\negthinspace\negthinspace/}
\newcommand{\hyp}{\mathrm{hyp}}
\newcommand{\vbar}{\,|\,}
\renewcommand{\leq}{\leqslant}
\renewcommand{\geq}{\geqslant}
\renewcommand{\phi}{\varphi}
\renewcommand{\epsilon}{\varepsilon}
\renewcommand{\Re}{\operatorname{\mathfrak{Re}}}
\renewcommand{\Im}{\operatorname{\mathfrak{Im}}}
\title
[Asymptotics of quantum invariants of surface diffeomorphisms II]
{Asymptotics of quantum invariants\\ of surface diffeomorphisms II:
\\
The figure-eight knot complement}
\author{Francis Bonahon}
\address {Francis Bonahon, Department
of Mathematics,  University of
Southern California, Los Angeles
CA~90089-2532, U.S.A.}
\email{fbonahon@usc.edu}
\urladdr{https://dornsife.usc.edu/francis-bonahon/}
\address {Francis Bonahon, Department of Mathematics,  Michigan State University, 
East Lansing MI 48824, U.S.A.}
\email{bonahonf@msu.edu}
\author{Helen Wong}
\address {Helen Wong, Department of Mathematics,  Claremont McKenna College, 
Claremont CA 91711, U.S.A.}
\email{hwong@cmc.edu}
\urladdr{https://sites.google.com/view/helenwong/}
\author{Tian Yang}
\address {Tian Yang, Department of Mathematics,  Texas A\&M University,
College Station TX 77843, U.S.A.}
\email{tianyang@math.tamu.edu}
\urladdr{https://www.math.tamu.edu/~tianyang/}
\date{\today}
\subjclass[2000]{57K31, 57K32}
\thanks{This work was partially supported by the grants DMS-1711297, DMS-2005656 (PI: Francis Bonahon), DMS-1841221, DMS-1906323 (PI: Helen Wong) and DMS-1812008 (PI: Tian Yang) from the US National Science Foundation, as well as a Birman Fellowship from the American Mathematical Society and a Simons Fellowship from the Simons Foundation (PI: Helen Wong).}
\begin{document}
\maketitle

\begin{abstract}
In earlier work, the authors introduced a conjecture which, for an orientation-preserving diffeomorphism $\phi \colon S \to S$ of a surface, connects a certain quantum invariant of $\phi$ with the hyperbolic volume of its mapping torus $M_\phi$. This article provides a proof of this conjecture in the simplest case where it applies, namely when the surface $S$ is the one-puncture torus and  the mapping torus $M_\phi$ is the complement of the figure-eight knot. 
\end{abstract}

This article is the second installment in a series of three, starting with \cite{BWY1} and ending with \cite{BWY3}. In \cite{BWY1}, we introduced a conjecture that connects two apparently unrelated quantities associated to an orientation-preserving diffeomorphism $\phi \colon S \to S$ of a surface $S$. One is a quantum invariant based on the representation theory of the Kauffman bracket skein algebra $\SSS(S)$, and consists of a linear isomorphism $\Lambda_{\phi, r}^q \colon V \to V$ of a large vector space $V$. This isomorphism $\Lambda_{\phi, r}^q$ is defined up to conjugation and up to multiplication by a scalar of modulus 1, and depends on the diffeomorphism $\phi$, on a character $[r] \in \XX(S)$  represented by a group homomorphism $r \colon \pi_1(S) \to \SL$ and  invariant under the action of $\phi$, on a root of unity $q$ and on certain complex puncture weights strongly constrained by the earlier data.  The other invariant comes from hyperbolic geometry, and is the volume $ \vol_\hyp(M_\phi)$ of the complete hyperbolic metric of the mapping torus $M_\phi$, obtained from the product $S\times [0,1]$ by gluing $S\times \{0\}$ to $S\times \{1\}$ through $\phi$. 

The conjecture of \cite{BWY1} is essentially that, if we choose the root of unity $q_n= \E^{\frac{2\pi\I}n}$, the modulus of the trace of $\Lambda_{\phi, r}^{q_n}$ exponentially grows as $\E^{\frac n{4\pi} \vol_\hyp(M_\phi)}$ as $n$ tends to $\infty$. See \S \ref{sect:VolConf} for a precise statement of this conjecture. 

The current article proves the conjecture for what is essentially the simplest nontrivial example, where the surface $S$ is the one-puncture torus $S_{1,1}$ and where, using the classical identification of the mapping class group $\pi_0\, \mathrm{Diff}^+(S_{1,1}) $ with $ \mathrm{SL}_2(\Z)$, $\phi$ corresponds to the matrix $LR= \left( 
\begin{smallmatrix}
 2&1\\1&1
\end{smallmatrix}
  \right)$. This example  is also famous, because the mapping torus $M_\phi$ is then diffeomorphic to the complement of the figure-eight knot in $S^3= \R^3 \cup \{\infty\}$. It has the advantage that both its hyperbolic geometry and its quantum topology are relatively simple. On the hyperbolic side, the hyperbolic metric of $M_\phi$ is isometric to the union of two regular hyperbolic ideal tetrahedra, and its volume is
$$
 \vol_\hyp(M_{LR}) = 6\Lambda\left(\textstyle  \frac\pi3 \right) \approx 2.02988... .
$$
where $\Lambda(\theta)$ denotes the Lobachevsky function. On the quantum topology side, the trace of $\Lambda_{\phi, r}^{q_n}$ can be factored as a product of two simpler terms and, more critically, a cancellation of phases (see Remark~\ref{rem:NoPhase}) makes the asymptotic analysis accessible by elementary methods. 

The techniques used in this article are completely elementary, although sometimes challenging. These are intended to serve as a ``sanity check'' for the more sophisticated methods developed in \cite{BWY3}, which combine harmonic analysis and complex geometry  to address more general diffeomorphisms of the one-puncture torus. In particular, we will already encounter here, in a very concrete way, some of the cancellations that occur in the more general framework. 

A large portion of this article, namely \S \ref{sect:AsymptoticsDq}, is also used in a critical way in \cite{BWY3}. Indeed, the  limit of a certain normalization factor $\left| D^q(u) \right|^{\frac1n}$ is determined in that section. The mere existence of such a finite limit is already quite surprising. 

\section{The Volume Conjecture for surface diffeomorphisms}
\label{sect:VolConf}

We briefly describe the conjecture that motivates this article, referring to \cite{BWY1} for details. 

Let $\phi \colon S \to S$ be an orientation-preserving diffeomorphism of an oriented surface $S$ of finite topological type. We assume that $\phi$ is pseudo-Anosov so that its \emph{mapping torus} $M_\phi$, obtained from the product $S\times [0,1]$ by gluing $S\times \{0\}$ to $S\times \{1\}$ through $\phi$, admits a complete hyperbolic metric. 

The diffeomorphism $\phi$ acts on any object that is naturally associated to the surface $S$. In particular, it acts on the \emph{$\SL$--character variety}
$$
\XX(S) = \{ \text {group homomorphisms } r \colon \pi_1(S) \to \SL \} \dbar \SL
$$ 
where $\SL$ acts on group homomorphisms $r \colon \pi_1(S) \to \SL$ by conjugation and where the quotient is taken in the sense of geometric invariant theory. Let $\phi^* \colon \XX(S) \to \XX(S)$ denote the corresponding algebraic  isomorphism induced by $\phi$. 

The action of $\phi$ on $\XX(S)$ has many fixed points. Classical ones come from the monodromy $\bar r_\hyp \colon \pi_1(M_\phi) \to \PSL$ of the hyperbolic metric of the mapping torus $M_\phi$. However, there are many more fixed points when $S$ has at least one puncture. In fact, the dimension of the fixed point set of $\phi^* \colon \XX(S) \to \XX(S)$ has complex dimension $c$ near these $\phi$--invariant hyperbolic characters, where $c$ is the cardinality of the orbit space of the action of $\phi$ on the set of punctures of $S$. (See for instance \cite[\S 3.2]{BWY1}.)

Now, suppose that we are given:
\begin{enumerate}
 \item the diffeomorphism $\phi \colon S \to S$;
 \item a character $[r]$ that is in the smooth part of the character variety $\XX(S)$, and is fixed by the action of $\phi$;
 \item for each puncture $v$ of the surface $S$, a number $\theta_v\in \C$ such that 
\begin{enumerate}
 \item if $\alpha_v \in \pi_1(S)$ is represented by a loop going once around the puncture $v$, its image $r(\alpha_v) \in \SL$ has eigenvalues $-\E^{\pm\theta_v}$;
 \item the puncture weights $\theta_v$ are invariant under the action of $\phi$, in the sense that $\theta_{\phi(v)} = \theta_v$ for every puncture $v$; 
\end{enumerate}
\item a primitive $n$--root of unity $q$ with $n$ odd. 
\end{enumerate}

Deep results of \cite{BonWon3, FroKanLe1, GanJorSaf} associate to this data (where the puncture weights $\theta_v$ are replaced by $p_v= \E^{\frac1n \theta_v} + \E^{-\frac1n \theta_v}$) an irreducible representation $\rho \colon \SSS(S) \to \End(V)$ of the Kauffman bracket skein algebra $\SSS(S)$ which, up to isomorphism, is fixed under the action of $\phi$. This means that the representations $\rho$ and $\rho \circ \phi^* \colon \SSS(S) \to \End(V)$ are isomorphic, by a linear  isomorphism $\Lambda_{\phi, r}^q \colon V \to V$. We normalize $\Lambda_{\phi, r}^q $ so that its determinant has modulus 1. Then, the modulus $ \left| \Tr \Lambda_{\phi, r}^{q} \right| $ of its trace is uniquely determined; see \cite[Prop.~4]{BWY1}. 

\begin{con}
\label{con:Conjecture}
 Let the pseudo-Anosov surface diffeomorphism $\phi \colon S \to S$, the $\phi$--invariant smooth character $[r] \in \XX(S)$ and the $\phi$--invariants puncture weights $\theta_v$ as above be given.  For every odd $n$, consider the primitive root of unity $q_n = \E^{\frac{2\pi\I}n}$. Then
 $$
 \lim_{n \,\text{odd} \,\to \infty} \frac1n \log \left| \Tr \Lambda_{\phi, r}^{q_n} \right| = \frac1{4\pi} \vol_\hyp (M_\phi), 
 $$
 where $ \vol_\hyp (M_\phi)$ is the volume of the complete hyperbolic metric of the mapping torus $M_\phi$. 
\end{con}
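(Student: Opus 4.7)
The plan is to extend the saddle-point and quantum dilogarithm strategy of the present paper to a general pseudo-Anosov $\phi$ by passing from the skein algebra $\SSS(S)$ to the quantum Teichm\"uller picture. Using the quantum trace map of Bonahon--Wong, together with the fact that the representations $\rho$ in play arise as ``shadow projections'' of Chekhov--Fock representations of the quantum Teichm\"uller space of $S$, one fixes an ideal triangulation $\lambda$ of $S$ and realizes $V$ concretely as an $L^2$-space over a torus of shear coordinates. The pseudo-Anosov $\phi$ is then decomposed as a finite sequence of diagonal flips $\lambda = \lambda_0 \leadsto \lambda_1 \leadsto \dots \leadsto \lambda_N$ followed by a relabeling identifying $\lambda_N$ with $\phi(\lambda)$. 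Each flip acts on the representation as an integral kernel built from the quantum dilogarithm $\Lih$, and composing these kernels (with the normalization fixing $\Lambda_{\phi,r}^{q_n}$ up to a phase of modulus one) expresses the trace as an explicit multi-dimensional oscillatory integral.

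With that model in place, the trace takes the schematic form
\begin{equation*}
\Tr \Lambda_{\phi,r}^{q_n} = \int_{T^N} D^q_\phi(\mathbf{u}) \, \E^{\frac{n}{2\pi\I} S_\phi(\mathbf{u})} \, d\mathbf{u},
\end{equation*}
where $S_\phi$ is a Neumann--Zagier-type potential assembled from one classical dilogarithm $\li$ per flip, and $D^q_\phi$ is a prefactor generalizing the normalization $D^q$ whose asymptotics are the subject of Section~\ref{sect:AsymptoticsDq}. The critical-point equations $\partial S_\phi/\partial u_i=0$ are precisely the edge-and-cusp gluing equations for the layered ideal triangulation of $M_\phi$ determined by the flip sequence. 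The $\phi$-invariant smooth character $[r]$, together with the puncture weights $\theta_v$, then selects a particular solution; the hyperbolic monodromy $\bar r_\hyp$ of $M_\phi$ corresponds to the \emph{geometric} critical point, at which $\Im S_\phi = 2\pi \cdot \tfrac{1}{4\pi}\vol_\hyp(M_\phi)$ by the standard identification of the imaginary part of the Neumann--Zagier potential with the hyperbolic volume of the mapping torus.

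The asymptotic analysis then proceeds by rigorous steepest descent. After applying the determinantal modulus-one normalization that defines $|\Tr \Lambda_{\phi,r}^{q_n}|$, one deforms the real shear torus onto a contour threading the geometric critical point along a descent direction of $\Im S_\phi$. The exponential growth rate is read off from $\Im S_\phi$ at that point, giving the desired limit $\tfrac1{4\pi}\vol_\hyp(M_\phi)$; the subexponential corrections come from Gaussian fluctuations and from the prefactor, controlled by the normalized log-modulus limit of $D^q$ established in Section~\ref{sect:AsymptoticsDq}.

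The hardest step is the global contour deformation, and three specific obstacles stand out. First, one must show that the geometric critical point strictly dominates every other solution of the gluing system, including non-geometric or partially-flipped branches; this is a genuine convexity statement along the Teichm\"uller slice, and is precisely where the complex-geometric input of \cite{BWY3} enters. Second, the cancellation of phases that trivializes the one-puncture-torus case (Remark~\ref{rem:NoPhase}) is not available in general, so the trace is genuinely oscillatory and higher-dimensional stationary phase with singular kernels is unavoidable, forcing the harmonic-analytic methods of \cite{BWY3}. Third, the contour must be moved past the singular locus of $\Lih$, and the residue contributions so produced must be shown to be subexponential; this last point is expected to be the main technical hurdle separating the elementary methods used here for the figure-eight complement from the general conjecture.
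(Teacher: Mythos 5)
The statement you are attempting is Conjecture~\ref{con:Conjecture}, and the paper does not actually prove it: what the paper establishes is only the case $S = S_{1,1}$, $\phi = LR$ (Theorem~\ref{thm:MainTheorem} and Corollary~\ref{cor:VolConfForLR}), and it explicitly defers more general punctured-torus diffeomorphisms to the companion work \cite{BWY3}. Your writeup is likewise a research program rather than a proof: you yourself list three unclosed obstacles (domination by the geometric critical point, genuine oscillation in the higher-rank stationary phase, and contour deformation past the poles of $\Lih$) and acknowledge that each requires machinery you do not supply. So neither your proposal nor this paper proves the statement as quoted; the honest claim is only the figure-eight case.

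Even for that one case, your outline does not match the paper's method. You propose to realize $\Tr\Lambda_{\phi,r}^{q_n}$ as a multi-dimensional oscillatory integral from quantum Teichm\"uller theory and push the contour onto a steepest-descent path through the geometric critical point of a Neumann--Zagier potential. The paper instead starts from the purely discrete double sum of Proposition~\ref{prop:FormulaTrace}, which for $\phi = LR$ factors into a product of two one-dimensional sums, Equation~(\ref{eqn:FormulaTrace}). The decisive observation, Remark~\ref{rem:NoPhase} and entirely specific to $LR$, is that the $\frac{1}{4\pi}t^2\I$ phase from the quantum-dilogarithm estimate of Lemma~\ref{lem:EstimateQdlFarFromPi/2} cancels exactly against the Gauss-sum phase $q^{2i^2}$, leaving a \emph{real}-exponential sum. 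Lemma~\ref{lem:SumRealExponentials} then handles such sums by an elementary Laplace/Riemann-sum argument, with no contour deformation, no steepest descent, and no residue control. The prefactor $\left|D^q(u)\right|^{1/n}$ is treated separately by explicit algebraic rearrangement and term-by-term limits in \S\ref{sect:AsymptoticsDq}, not by Gaussian fluctuations. Your sketch is closer in spirit to what \cite{BWY3} is said to do for general $\phi$; as written it would not even recover the $LR$ case without first discovering the phase cancellation, which is precisely what makes the paper's elementary approach possible.
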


The article is devoted to a proof of this conjecture for the simplest example where it applies, namely for a specific diffeomorphism of the one-puncture torus $S_{1,1}$; see Corollary~\ref{cor:VolConfForLR}. For that case, we will actually  prove in Theorem~\ref{thm:MainTheorem} a result that is stronger than the conjecture, by identifying constants $K_1$ and $K_3>0$ such that 
\begin{align*}
 \lim_{\substack{n\to \infty \\ n=1\, \mathrm{mod}\, 4}}  \left| \Tr \Lambda_{\phi, r}^{q_n} \right| \E^{-\frac n{4\pi} \vol_\hyp(M_\phi)} &= K_1
 \\
 \text{and }
  \lim_{\substack{n\to \infty \\ n=3\, \mathrm{mod}\, 4}}  \left| \Tr \Lambda_{\phi, r}^{q_n} \right| \E^{-\frac n{4\pi} \vol_\hyp(M_\phi)} &= K_3.
\end{align*}

This result is consistent with the dual convergence mode that was observed in the numerical experiments  of \cite[\S 2]{BWY1}. 

\section{The trace of the intertwiner for $\phi = LR$} 
\label{sect:ComputeTrace}

For the one-puncture torus $S_{1,1}$, the mapping class group $\pi_0 \,\mathrm{Diff}^+(S_{1,1})$ is determined by its action on the homology group $H_1(S_{1,1}) \cong \Z^2$, which provides an identification $\pi_0 \mathrm{Diff}^+(S_{1,1}) \cong \mathrm{SL}_2(\Z)$. A well-known property of $ \mathrm{SL}_2(\Z)$ is that every conjugacy class is represented by an element $\phi=\pm\phi_1 \phi_2 \dots \phi_k$ where each $\phi_i$ is equal to $L=\left(
\begin{smallmatrix}
 1&1\\0&1
\end{smallmatrix}
 \right)$ 
 or
 $R=\left(
\begin{smallmatrix}
 1&0\\1&1
\end{smallmatrix}
 \right)$. It turns out that the intertwiner $\Lambda_{\phi, r}^q$ and the volume $ \vol_\hyp (M_\phi)$  are both independent of the sign $\pm$. Also, there needs to be at least one $\phi_i$ equal to $L$ and one $\phi_j$ equal to $R$ if we want $\phi$ to be pseudo-Anosov. The simplest example where Conjecture~\ref{con:Conjecture} applies is therefore that of $\phi = LR$.

We apply the algorithm of \cite[\S 4]{BWY1} to determine the trace of the intertwiner $\Lambda_{LR, r}^q$ occurring in Conjecture~\ref{con:Conjecture}. 

We start with the ideal triangulation sweep associated to $\phi=LR$ and with a periodic edge weight system for this sweep, as in \cite[\S3.2]{BWY1}. In practice, this edge weight system is  a finite sequence $(a_0, b_0, c_0)$, $(a_1, b_1, c_1)$, $(a_2, b_2, c_2) \in \big( \C^* \big)^3$ such that
\begin{align*}
 a_1 &= b_0^{-1}
 &
 b_1 &= (1+ b_0)^2 a_0
 &
 c_1&= (1 + b_0)^{-2} b_0^2 c_0
 \\
 a_2 &= c_1^{-1}
 &
 b_2 &= (1+c_1)^2 b_1
 &
 c_2 &= (1+c_1)^{-2} c_1^2 a_1
 \\
 a_0 &= a_2
 & 
 b_0 &= b_2
 &
 c_0 &= c_2.
\end{align*}

This case is simple enough that these equations can be explicitly solved through the quadratic formula. Their solutions are of the form
\begin{align*}
 a_0 &= \frac{ -1 -\frac32 b_0 - b_2^{2} \pm \sqrt{-b_0^{3} - \frac74 b_0^{2} - b_0^{}}} {(1+b_0)^2}
 &
 c_0&= \frac{ -1 -\frac32 b_0 - b_2^2 \mp \sqrt{-b_0^3 - \frac74 b_0^{2} - b_0}}{b_0^2}
 \\
 a_1 &= b_0^{-1}
 &
 b_1 &= \textstyle  -1 -\frac32 b_0 - b_2^2 \mp \sqrt{-b_0^3 - \frac74 b_0^{2} - b_0}
 \\
 c_1 &=  \frac{ -1 -\frac32 b_0 - b_2^{2} \mp \sqrt{-b_0^{3} - \frac74 b_0^{2} - b_0^{}}} {(1+b_0)^2}
\end{align*}
as $b_0$ ranges over $\C-\{0,-1\}$ and $\pm \in \{+, -\}$ (with $\mp = - \pm$ and an arbitrary choice of complex square roots). 

As in  \cite[\S3.2]{BWY1}, such a periodic edge weight system defines a $\phi$--invariant character $[\bar r ] \in  \XP(S_{1,1})$. Conversely, there is a Zariski-open dense subset $U \subset \XP(S_{1,1})$ such that every  $\phi$--invariant character $[\bar r ] \in  U$ is obtained in this way; see for instance \cite[Lem. 10]{BWY1}. 

In particular, the periodic edge weight system  where $(a_0, b_0, c_0) = \left( \E^{-\frac{2\pi\I}3},  \E^{\frac{2\pi\I}3}, 0 \right)$ induces the character $[\bar r_\hyp] \in \XP(S_{1,1})$ coming from the monodromy of the complete hyperbolic metric of the mapping torus $M_\phi$; see for instance \cite[Chap. 8]{BonBook}. 

In the framework of Conjecture~\ref{con:Conjecture}, we are also given a puncture weight $\theta_v \in \C$ such that $\E^{\theta_v} = a_0b_0c_0$. 
Then, as in \cite[\S 4.7]{BWY1}, we choose ``logarithms'' $A_k$, $B_k$, $C_k$, $V_k \in \C^*$ such that
\begin{align*}
 \E^{A_k} &= a_k
&
 \E^{B_k} &= b_k
&
 \E^{C_k} &= c_k
&
 \E^{V_k} &= 1+a_k^{-1} 
\end{align*}
for every $k=0$, $1$, $2$. These logarithms are constrained by the properties that $$A_0 + B_0 + C_0 = \theta_v$$ and
\begin{align*}
A_1 &=  -B_0
&
A_2 &=  -C_1
\\
B_1&=  2 V_1 + A_0
&
B_2 &=  2 V_2 + B_1
\\
C_1 &= -2 V_1  +2B_0 + C_0
&
C_2 &= -2 V_2 +2 C_1 +A_1.
\end{align*}

Once these numbers are chosen, there exist integers $\widehat l_0$, $\widehat m_0$, $\widehat n_0 \in \Z$ such that
\begin{align*}
 A_0 &= A_2 + 2 \pi \I \widehat l_0
 &
 B_0 &= B_2 + 2 \pi \I \widehat m_0
 &
 C_0 &= C_2 + 2 \pi \I \widehat n_0.
\end{align*}
In addition, $\widehat l_0 + \widehat m_0 + \widehat n_0 =0$.

The following is the immediate application of  \cite[Prop. 32]{BWY1} to the case at hand.

\begin{prop}
\label{prop:FormulaTrace}
Consider the diffeomorphism $\phi=LR=\left(
\begin{smallmatrix}
 2&1\\1&1
\end{smallmatrix}
 \right)$ of the one-puncture torus $S_{1,1}$. 
Let $[r]\in \XX(S_{1,1})$ be the $\phi$--invariant character associated to the periodic edge weight system $(a_0, b_0, c_0)$, $(a_1, b_1, c_1)$, $(a_2, b_2, c_2)=(a_0, b_0, c_0)$, and let  $\theta_v\in \C$ be such that $\E^{\theta_v}= a_0 b_0 c_0$. For an odd integer $n$,  let $\Lambda_{\phi, r}^q$ be the intertwiner to the data of $\phi$, $[r]$, $q=\E^{\frac{2\pi\I}n}$ and $p_v =  \E^{\frac1n \theta_v}  +\E^{-\frac1n \theta_v} $, as in Conjecture~{\upshape \ref{con:Conjecture}}. 

Then, up to multiplication by a scalar with modulus $1$, the trace of $\Lambda_{\phi, r}^q$ is equal to
\begin{align*}
\Tr \Lambda_{\phi, r}^q
&=\frac1
{n  \left| D^q \big( q \E^{-\frac1n {A_1}} \big) \right|^{\frac1n}\left| D^q \big( q \E^{-\frac1n{A_2}} \big) \right|^{\frac1n} }
\\
&\qquad\qquad
\sum_{i_1,\, i_2 =1}^n
  \QDL^q \big( q \E^{-\frac1n{A_1}},  \E^{\frac1n V_1} \vbar  2i_1 \big)   \QDL^q \big( q \E^{-\frac1n{A_2}},  \E^{\frac1n V_2}  \vbar  2i_2 \big) \\
&\qquad\qquad \qquad\qquad \qquad\qquad
q^{ 2 i_1^2 + 2 i_2^2 - \widehat l_0 i_1  + \frac{ \widehat l_0 -\widehat m_0 +\widehat n_0}2 i_2 }
\end{align*}
where the quantities $A_1$, $A_2$, $V_1$, $V_2 \in \C$ and the integers $\widehat l_0$, $\widehat m_0$, $\widehat n_0\in \Z$ are defined as above and where, for $u$, $v\in \C$ with $v^n = 1+u^n$, 
$$
 \QDL^q(u,v \vbar  i) = v^{-i} \prod_{j=1}^i (1+ u q^{-2j})
$$
and \pushQED{\qed}
\begin{equation*}
D^q(u)  =  \prod_{i=1}^n \QDL^q(u, v \vbar  i)  = (1+u^n)^{- \frac{n+1}2} \prod_{j=1}^{n} (1+ u q^{-2j})^{n-j+1}.
\qedhere
\end{equation*} 
\end{prop}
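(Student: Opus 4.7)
The statement is explicitly advertised as an application of the general trace formula \cite[Prop.~32]{BWY1}, so the plan is to invoke that result and carefully specialize it to the two-step sweep associated to $\phi=LR$. Proposition~32 of \cite{BWY1} expresses $\Tr \Lambda^q_{\phi,r}$ for a general pseudo-Anosov as a sum, indexed by one variable $i_k\in\{1,\dots,n\}$ per diagonal exchange in an ideal triangulation sweep for $\phi$, of a product of $\QDL^q$-factors (one per exchange) weighted by a $q$-power encoding the ``shear'' integers, and divided by a normalization built from the $D^q$-functions at each exchange. The first step is to set up this dictionary between the combinatorics of the sweep for $LR$ and the symbols on the right-hand side.

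Next, I would unpack the sweep. The word $LR$ corresponds to exactly two diagonal exchanges, giving the two summation indices $i_1, i_2$ and the two $\QDL^q$-factors
\[
\QDL^q\!\big(q\E^{-A_1/n}, \E^{V_1/n} \vbar 2i_1\big), \qquad
\QDL^q\!\big(q\E^{-A_2/n}, \E^{V_2/n} \vbar 2i_2\big),
\]
where the arguments are read off from the recursion between the edge weights $(a_0,b_0,c_0)$, $(a_1,b_1,c_1)$, $(a_2,b_2,c_2)$ and from the chosen logarithms $A_k$, $V_k$ (which enter via $\E^{A_k}=a_k$ and $\E^{V_k}=1+a_k^{-1}$). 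The appearance of $2i_k$ rather than $i_k$ reflects that $n$ is odd, so the substitution $q^2\mapsto q$ is built into the conventions of \cite{BWY1}. The $\frac1n$ comes from the trace normalization, and the denominators $|D^q(q\E^{-A_k/n})|^{1/n}$ come from the condition that each elementary intertwiner be normalized to have determinant of modulus $1$; the formula $D^q(u)=\prod_i \QDL^q(u,v\vbar i) = (1+u^n)^{-(n+1)/2}\prod_j(1+uq^{-2j})^{n-j+1}$ follows from a direct regrouping of the product $\prod_{i=1}^n \prod_{j=1}^i (1+uq^{-2j})$ and the identity $v^n=1+u^n$.

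The main bookkeeping step is matching the phase factor $q^{2i_1^2+2i_2^2-\widehat l_0 i_1+\frac{\widehat l_0-\widehat m_0+\widehat n_0}{2}i_2}$. Following the derivation in \cite[\S 4]{BWY1}, each exchange contributes a quadratic term $2i_k^2$ in the index, and the linear terms in $i_1$ and $i_2$ come from the periodicity discrepancies $\widehat l_0, \widehat m_0, \widehat n_0 \in \Z$ between the logarithms at level $k=0$ and those at level $k=2$. The three relations $A_0=A_2+2\pi\I\widehat l_0$, $B_0=B_2+2\pi\I\widehat m_0$, $C_0=C_2+2\pi\I\widehat n_0$ together with $A_0+B_0+C_0=\theta_v$ (so $\widehat l_0+\widehat m_0+\widehat n_0=0$) let one rewrite the closing-up phase in Proposition~32 in terms of the two coefficients $-\widehat l_0$ and $\frac{\widehat l_0-\widehat m_0+\widehat n_0}{2}$ that appear above; the halving is permitted because $n$ is odd, so $2$ is invertible in $\Z/n\Z$ and a canonical half-integer exponent of $q$ is well defined.

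The main obstacle I anticipate is purely bookkeeping: the general formula in \cite[Prop.~32]{BWY1} is written for an arbitrary sweep with arbitrary edge labeling, so one has to be meticulous in identifying which of the three edges $a_k$, $b_k$, $c_k$ of $S_{1,1}$ plays the role of the ``flipped'' edge at each exchange, and then propagate the resulting permutation of labels through the closing-up identifications to obtain the precise integer coefficients of $i_1$ and $i_2$ in the phase. Once this matching is verified, the proposition follows directly.
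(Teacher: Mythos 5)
Your proposal matches the paper's treatment: the paper proves Proposition~\ref{prop:FormulaTrace} simply by declaring it the immediate application of \cite[Prop.~32]{BWY1} to the two-exchange sweep for $\phi=LR$, which is exactly the specialization-and-bookkeeping you outline (and your verification of the product identity for $D^q(u)$ is correct). One small simplification you could note: since $\widehat l_0+\widehat m_0+\widehat n_0=0$, the exponent $\frac{\widehat l_0-\widehat m_0+\widehat n_0}2=-\widehat m_0$ is already an integer, so no appeal to the invertibility of $2$ modulo $n$ is needed for that coefficient.
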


A simplification that is very specific to this case is that the above double sum can be factored as the product of two single sums. More precisely, using the fact that $\widehat l_0 + \widehat m_0 + \widehat n_0 =0$ for a further simplification, 
\begin{align}
\notag
\Tr \Lambda_{\phi, r}^q
&=\frac1
{n  \left| D^q \big( q \E^{-\frac1n {A_1}} \big) \right|^{\frac1n}\left| D^q \big( q \E^{-\frac1n{A_2}} \big) \right|^{\frac1n} }
\notag
\\
\label{eqn:FormulaTrace}
&\qquad\qquad
\left( \sum_{i_1=1}^n
  \QDL^q \big( q \E^{-\frac1n{A_1}},  \E^{\frac1n V_1} \vbar  2i_1 \big)  q^{ 2 i_1^2 - \widehat l_0 i_1  }  \right)\\
&\qquad\qquad \qquad\qquad \qquad\qquad
\left( \sum_{i_2=1}^n
  \QDL^q \big( q \E^{-\frac1n{A_2}},  \E^{\frac1n V_2} \vbar  2i_2 \big)  q^{  2 i_2^2 - \widehat m_0 i_2 } \right).
  \notag
\end{align}

This reduces the question of finding the asymptotics of $\Tr \Lambda_{\phi, r}^q$ to the asymptotic analysis of sums
$$
\Sigma_n = \sum_{i=1}^n \QDL^q (u,v \vbar 2i) q^{2i^2 - \widehat k i}
$$
and  $n$--roots 
$$
\left| D^q (u) \right|^{\frac1n}
$$
with $q= \E^{\frac{2\pi\I}n}$, $u=\E^{\frac1n U}$, $v=\E^{\frac1n V}$ for fixed numbers $U$, $V\in \C$ such that $\E^V= 1+ \E^U$, and $\widehat  k\in \Z$ fixed. Note that the parameters $u_k= q \E^{-\frac1n A_k}$ occurring in  (\ref{eqn:FormulaTrace})  are clearly of the above type, since $u_k = \E^{\frac1n U_k}$ for  $U_k=2\pi\I - A_k$. 

\section{Asymptotics of the sum $\Sigma_n$} 
\label{sect:AsymptoticsSn}

This section is devoted to the asymptotic analysis, as $n$ odd tends to infinity, of the sum
\begin{equation}
\label{eqn:TheSumSn}
\Sigma_n = \sum_{i=1}^n \QDL^q (u,v \vbar 2i) q^{2i^2 - \widehat k i}
\end{equation}
where $q= \E^{\frac{2\pi\I}n}$, $u=\E^{\frac1n U}$ and $v=\E^{\frac1n V}$ for fixed numbers $U$, $V\in \C$ such that $\E^V= 1+ \E^U$, and $\widehat  k\in \Z$. 

The arbitrarily chosen example of Figure~\ref{fig:SingleSum}, where $U \approx - 2.58581 + 6.05389\, \I$, $\widehat k=5$ and   $n=4001$, may be a good illustration of the phenomena that occur in the proof. Each dot represents, in the complex plane $\C$, one term of the sum $\Sigma_n$. Most of the terms are concentrated near the origin (and hardly visible on the picture), and do not contribute very much to the sum. The  dots corresponding to the largest terms are arranged in three large ``petals'', whose size will be shown to exponentially increase with $n$. Two of these petals are opposite each other, and we will see that their contributions to the sum relatively cancel out. More precisely, the sum of the corresponding terms is small compared to the leading contribution, which comes from the dots in the third petal.

\begin{figure}[htbp]
\includegraphics[width=.5\textwidth]{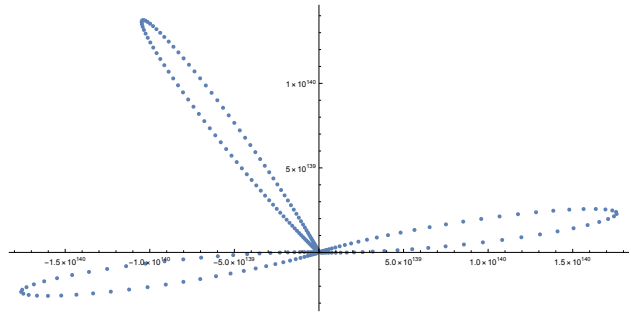}
\caption{The terms of the sum $\Sigma_n$, for a specific example.}
\label{fig:SingleSum}
\end{figure}

\subsection{The discrete and continuous quantum dilogarithms}
\label{sect:QuantumDilogEstimates}

The goal of this section is to obtain estimates on the size, as well as the phase, of the terms $ \QDL^q (u,v \vbar 2i) q^{2i^2 - \widehat k i}$ occurring in the sum $\Sigma_n$ of (\ref{eqn:TheSumSn}). 

For a primitive $n$--root of unity $q$ with $n$ odd, and for $u$, $v\in \C$ with $v^n = 1+u^n$, the term 
$$
 \QDL^q(u,v \vbar  i) = v^{-i} \prod_{j=1}^i (1+ u q^{-2j}).
$$
is the Faddeev-Kashaev  \emph{discrete quantum dilogarithm function}. This is an $n$--periodic function of the integer $i$, which enables one to define it for negative $i\in \Z$ as well.

A well-known interpolation of this discrete function is provided by the  continuous quantum dilogarithm function of Faddeev \cite{Fadd, FadKash}. Because many different normalizations are used in the literature, we first summarize our notation and conventions, as well as the basic properties that we will need. We refer to \cite[\S 4.1]{CheFoc1} or  \cite[\S 2.3]{WongYang1}, for instance,  for proofs of these properties.

We will actually distinguish two continuous quantum dilogarithm functions, a ``small'' one and a ``big'' one. 

For $\hbar>0$, the \emph{small continuous quantum dilogarithm} is the function
\begin{equation}
\label{eqn:DefQuantumDilogIntegral}
\lih(z) ={2\pi \I \hbar} \int_\Omega  \frac{\E^{(2z-\pi)t}}{4t\sinh(\pi t) \sinh(\pi \hbar t)} \,dt ,
\end{equation}
defined for $-\frac{\pi\hbar}2< \Re z < \pi+ \frac{\pi\hbar}2$, 
where the integration domain $\Omega$ is obtained from $\R$ (oriented from $-\infty$ to $+\infty$) by replacing a small interval around 0 with a semi-circle lying above the real line, in order to bypass the pole of the integrand at 0. 

The following property shows that this function can be seen as a deformation of the \emph{classical dilogarithm}
$$
\li(u) = - \int_0^u \frac{\log(1-t)}t\, dt
$$
defined over the open simply-connected subset $ \C- \left[1, +\infty\right[$. 

\begin{prop}
\label{prop:QuantumNonquantumDilog}
For every $z$ with $0< \Re z < \pi$ (so that $\E^{2\I z} \in \C- \left[1, +\infty\right[$),
$$ \lih (z) = \li(\E^{2\I z}) + O(\hbar^2)$$
as $\hbar\to0$, and this uniformly on compact subsets of the strip $\{z \in \C; 0 < \Re z < \pi \}$. 
\end{prop}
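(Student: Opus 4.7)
The plan is a two-step argument: first, use the Taylor expansion of $\sinh(\pi \hbar t)$ to reduce the $\hbar$-dependent integral defining $\lih(z)$ to an $\hbar$-independent integral with error $O(\hbar^2)$; second, identify that $\hbar$-independent integral with $\li(\E^{2\I z})$ by a residue calculation.

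For the first step, I start from $\sinh(\pi \hbar t) = \pi \hbar t + (\pi \hbar t)^3/6 + O((\hbar t)^5)$, which gives
$$\frac{2\pi \I \hbar}{\sinh(\pi \hbar t)} = \frac{2\I}{t} - \frac{\I \pi^2 \hbar^2 t}{3} + O(\hbar^4 t^3).$$
Substituting this into the defining integral \eqref{eqn:DefQuantumDilogIntegral} yields
$$\lih(z) = \I \int_\Omega \frac{\E^{(2z-\pi)t}}{2 t^2 \sinh(\pi t)} \, dt + O(\hbar^2).$$
To justify the uniform $O(\hbar^2)$ bound on a compact subset $K$ of the strip $\{0 < \Re z < \pi\}$, I observe that for $z \in K$ the factor $\E^{(2z-\pi)t}/\sinh(\pi t)$ decays exponentially as $|t| \to \infty$ (uniformly in $z$), so both the main integrand and the next-order correction are dominated by integrable majorants independent of $\hbar$ small enough.

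The heart of the argument is then the classical identity
$$\li(\E^{2\I z}) = \I \int_\Omega \frac{\E^{(2z-\pi)t}}{2 t^2 \sinh(\pi t)}\, dt.$$
Both sides are analytic in $z$ on the strip, so it suffices to establish it on the upper half $\Im z > 0$ and extend. Differentiating in $z$, the left side gives $-2\I \log(1 - \E^{2\I z})$, while the right side becomes $\I \int_\Omega \frac{\E^{(2z-\pi)t}}{t \sinh(\pi t)}\, dt$. For $\Im z > 0$, the integrand decays exponentially on a large semicircle in the upper half plane, so I close the contour upward. The enclosed poles are the simple poles at $t = \I k$ for $k \geq 1$; the double pole at $t = 0$ is excluded because $\Omega$ passes above $0$. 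A direct computation gives the residue at $\I k$ as $\E^{2\I k z}/(\pi \I k)$, and the residue theorem yields $2 \sum_{k \geq 1} \E^{2\I k z}/k = -2 \log(1 - \E^{2\I z})$, which after multiplication by the outer $\I$ matches the derivative of the left side. The constant of integration is pinned down by letting $\Im z \to \infty$, where both sides vanish by dominated convergence.

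The main obstacle is the residue computation in the second step, specifically showing that the semicircle contribution vanishes in the limit and that the infinite sum of residues converges to the integral over $\Omega$. A structural point worth emphasizing is that $\Omega$ must pass \emph{above} the pole at $t = 0$: were it to pass below, the extra double-pole residue at $0$ would destroy the match with $\li(\E^{2\I z})$. Combining the two steps, and noting that both estimates are uniform on compact subsets of the strip, yields the proposition.
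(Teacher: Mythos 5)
Your argument is sound, and it is worth noting that the paper itself does not prove this proposition at all: it simply cites \cite{CheFoc1} and \cite{WongYang1}. What you propose is essentially the standard argument behind those references, made self-contained: expand $2\pi\I\hbar/\sinh(\pi\hbar t)=\frac{2\I}{t}+O(\hbar^2 t)$ to peel off an $\hbar$-independent integral, then identify that integral with $\li(\E^{2\I z})$ by differentiating in $z$ and summing residues at $t=\I k$, $k\geq 1$ (your residue $\E^{2\I k z}/(\pi\I k)$ is correct, and the remark that $\Omega$ must pass above $t=0$ is exactly the right structural point; one can sanity-check the identity at $z=\frac\pi2$, where the half-residue of the $-\frac{\pi}{12t}$ term of the Laurent expansion reproduces $\li(-1)=-\frac{\pi^2}{12}$).

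Three places deserve tightening. First, the expansion with remainder $O(\hbar^4t^3)$ is only valid for $\hbar|t|$ bounded, and an integrable majorant by itself yields convergence, not the rate; to get a uniform $O(\hbar^2)$ you should check the global pointwise bound
\begin{equation*}
\left|\frac{2\pi\hbar}{\sinh(\pi\hbar t)}-\frac{2}{t}\right|\;\leq\; C\,\hbar^2\,|t| \qquad\text{for all } t\in\Omega,
\end{equation*}
which follows by treating $|\hbar t|\leq 1$ (Taylor) and $|\hbar t|\geq 1$ (bound each term separately) and then integrating against the exponentially decaying factor $\E^{(2z-\pi)t}/(4t\sinh\pi t)$, uniformly for $z$ in a compact subset of the strip. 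Second, when closing the contour upward you must avoid the poles on the imaginary axis: take semicircles of radius $R_k=k+\frac12$, on which $|\sinh(\pi t)|$ is bounded below near the imaginary axis, and use the factor $\E^{-2\Im z\,R\sin\phi}$ (this is where $\Im z>0$ enters) together with $\E^{-2(\pi-\Re z)R|\cos\phi|}$ or $\E^{-2\Re z\,R|\cos\phi|}$ on the two quarter-circles; the sum of residues is then the convergent series you wrote. Third, the constant of integration: on the real portions of $\Omega$ the modulus of the integrand is independent of $\Im z$, so dominated convergence does not apply directly; either invoke the Riemann--Lebesgue lemma for the rapidly oscillating factor $\E^{2\I t\,\Im z}$, or (cleaner) shift $\Omega$ to the horizontal line $\Im t=\frac12$, which crosses no poles and gives $|F(z)|=O(\E^{-\Im z})\to 0$. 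With these repairs your proof is complete and, unlike the paper, does not rely on external references.
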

\begin{proof}
 See for instance \cite[\S 4.1]{CheFoc1} or \cite[\S 2.3]{WongYang1}. 
\end{proof}

The \emph{big continuous quantum dilogarithm}  is the function
$$
\Lih(z) = \exp \left(  {\textstyle\frac1{ 2\pi\I\hbar} } \lih(z) \right)
$$
also defined for $-\frac{\pi\hbar}2< \Re z < \pi+ \frac{\pi\hbar}2$. 

\begin{prop}
\label{prop:BigQuantumDilog}
The function 
$
\Lih(z)
$
uniquely extends to a meromorphic function on the whole complex plane, whose poles are the points  $\pi + \frac{\pi\hbar}2 + a\pi + b\pi\hbar$ for all integers $a$, $b\geq 0$, and whose zeros are the points $-\frac{\pi\hbar}2 - a\pi - b\pi\hbar$ for all integers $a$, $b\geq 0$.

This extension satisfies the functional equations
\begin{align}
\label{eqn:BigQuantumDilogPiHbarPeriodic}
\Lih (z+ \pi \hbar) &= (1- \E^{2 \I z +\pi\I \hbar} )^{-1} \Lih (z )\\
\label{eqn:BigQuantumDilogPiPeriodic}
\Lih (z+\pi) &= (1+ \E^{\frac{2\I z}\hbar})^{-1} \Lih (z ). 
\end{align}
\end{prop}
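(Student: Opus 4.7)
The plan is to derive both functional equations directly from the integral representation (\ref{eqn:DefQuantumDilogIntegral}), and then to extend $\Lih$ meromorphically by iterating them. First I would observe that the contour integral defining $\lih(z)$ is absolutely convergent and holomorphic in $z$ on the open strip $-\pi\hbar/2 < \Re z < \pi + \pi\hbar/2$, since along $\R$ the integrand decays like $\E^{(2\Re z - \pi - \pi\hbar)t}$ as $t \to +\infty$ and like $\E^{(2\Re z - \pi + \pi\hbar)|t|}$ as $t \to -\infty$, and the semicircle around $0$ takes care of the only pole of the integrand near the real line. Since $\Lih = \exp\bigl(\lih/(2\pi\I\hbar)\bigr)$, it follows that $\Lih$ is holomorphic and nowhere vanishing on the same strip.

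For the functional equation (\ref{eqn:BigQuantumDilogPiPeriodic}), I would compute
\[
\lih(z+\pi) - \lih(z) = 2\pi\I\hbar \int_\Omega \frac{\E^{(2z-\pi)t}(\E^{2\pi t}-1)}{4t \sinh(\pi t)\sinh(\pi\hbar t)}\, dt,
\]
valid for those $z$ with both $z$ and $z+\pi$ in the strip (i.e.\ $|\Re z| < \pi\hbar/2$). The identity $\E^{2\pi t}-1 = 2\E^{\pi t}\sinh(\pi t)$ cancels one hyperbolic sine, so the integral simplifies to $\pi\I\hbar \int_\Omega \E^{2zt}/[t \sinh(\pi\hbar t)]\, dt$. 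For $\Im z > 0$ the integrand decays on a large semicircle in the upper half-plane, so the integral equals $2\pi\I$ times the sum of residues at the simple poles $t = \I k/\hbar$ with $k\geq 1$; summing the resulting series and using $\sum_{k\geq1}(-1)^k x^k/k = -\log(1+x)$ yields $\lih(z+\pi)-\lih(z) = -2\pi\I\hbar\log(1+\E^{2\I z/\hbar})$, and dividing by $2\pi\I\hbar$ and exponentiating produces (\ref{eqn:BigQuantumDilogPiPeriodic}), which then extends by analytic continuation. The companion equation (\ref{eqn:BigQuantumDilogPiHbarPeriodic}) follows from the symmetric computation of $\lih(z+\pi\hbar)-\lih(z)$: the factor $\E^{2\pi\hbar t}-1 = 2\E^{\pi\hbar t}\sinh(\pi\hbar t)$ now cancels the other hyperbolic sine, closing in the upper half-plane collects residues at $t = \I k$ ($k\geq 1$), and the same summation produces $-2\pi\I\hbar \log(1-\E^{2\I z + \pi\I\hbar})$.

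To extend $\Lih$ meromorphically to all of $\C$ and identify its poles and zeros, I would rewrite (\ref{eqn:BigQuantumDilogPiPeriodic}) as $\Lih(z) = (1+\E^{2\I z/\hbar})\Lih(z+\pi)$ and iterate. Because the original strip has real width $\pi + \pi\hbar > \pi$, consecutive $\pi$-translates of it overlap, so this definition is single-valued on each larger domain and produces a meromorphic extension to all of $\C$. The factor $1+\E^{2\I z/\hbar}$ has simple zeros exactly at $z = \pi\hbar/2 + k\pi\hbar$ for $k\in\Z$ and no poles, so applied inductively it contributes zeros of $\Lih$ at the points $-\pi\hbar/2 - b\pi\hbar$ for $b\geq 0$ within the strip $S-\pi$, then at $-\pi - \pi\hbar/2 - b\pi\hbar$ within $S-2\pi$, and so on, giving the claimed zero set $\{-\pi\hbar/2 - a\pi - b\pi\hbar : a,b\geq 0\}$; the analogous iteration to the right (using the reciprocal form) gives the pole set $\{\pi + \pi\hbar/2 + a\pi + b\pi\hbar : a,b\geq 0\}$. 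The parallel iteration of (\ref{eqn:BigQuantumDilogPiHbarPeriodic}), whose factor $1-\E^{2\I z + \pi\I\hbar}$ has zeros at $z = -\pi\hbar/2 + k\pi$, recovers exactly the same sets with the roles of $a$ and $b$ exchanged and so confirms consistency. The main obstacle I anticipate is bookkeeping: at each inductive step one must check that the new zeros (resp.\ poles) contributed by the right-hand factor of the functional equation do not accidentally cancel a pre-existing pole (resp.\ zero) of $\Lih$ from the previous strip; this requires using both functional equations in tandem and carefully tracking the multiplicities on the lattice $\pi\Z_{\geq 0} + \pi\hbar\Z_{\geq 0}$.
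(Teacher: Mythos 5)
Your proposal is correct in outline, and it is worth noting that the paper itself gives no argument for this proposition: the ``proof'' is a pointer to \cite[\S 4.1]{CheFoc1} and \cite[\S 2.3]{WongYang1}, and what you describe is essentially the standard derivation found there. Your two residue computations are right: after the cancellation of one hyperbolic sine, closing the contour upward (for $\Im z>0$, $|\Re z|<\frac{\pi\hbar}2$) picks up the poles at $t=\I k/\hbar$, resp.\ $t=\I k$, and summing the series gives $\lih(z+\pi)-\lih(z)=-2\pi\I\hbar\log\bigl(1+\E^{2\I z/\hbar}\bigr)$ and $\lih(z+\pi\hbar)-\lih(z)=-2\pi\I\hbar\log\bigl(1-\E^{2\I z+\pi\I\hbar}\bigr)$, which exponentiate to (\ref{eqn:BigQuantumDilogPiPeriodic}) and (\ref{eqn:BigQuantumDilogPiHbarPeriodic}); the identities then propagate by analytic continuation, and since the strip has width $\pi+\pi\hbar>\pi$, iterating one of them across overlapping translates produces the meromorphic extension. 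You should, however, also justify that the closing arcs contribute nothing (the horizontal decay comes from $\sinh(\pi\hbar t)$, which is exactly where the restriction $|\Re z|<\frac{\pi\hbar}2$ enters, and the vertical decay from $\Im z>0$), and note that the pole of the integrand at $t=0$ is not enclosed because $\Omega$ passes above it --- your summation over $k\geq1$ implicitly uses this.

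Two smaller points. First, the decay rates you quote for the defining integral (\ref{eqn:DefQuantumDilogIntegral}) are off: as $t\to+\infty$ the integrand decays like $\E^{(2\Re z-2\pi-\pi\hbar)t}$ (you dropped one factor $\E^{-\pi t}$ from $\sinh(\pi t)$), and as $t\to-\infty$ like $\E^{-(2\Re z+\pi\hbar)|t|}$; with the exponents as you wrote them one would not recover the strip $-\frac{\pi\hbar}2<\Re z<\pi+\frac{\pi\hbar}2$. Second, the bookkeeping ``obstacle'' you anticipate is in fact vacuous: on the original strip $\Lih$ is holomorphic and nowhere zero, iterating (\ref{eqn:BigQuantumDilogPiPeriodic}) to the left only ever multiplies by the entire factors $1+\E^{2\I(z+j\pi)/\hbar}$ (whose zeros with $\Re z\geq\frac{\pi\hbar}2$ never lie in the new strips), so only zeros accumulate on the left, while iterating to the right only ever divides by such factors, so only poles accumulate on the right; hence no new zero can ever meet a pre-existing pole or vice versa, and one gets exactly the stated zero and pole sets (with multiplicities adding when lattice points coincide, which happens only for rational $\hbar$). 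Moreover a single functional equation suffices to build the extension; the other then holds automatically by analytic continuation, so no ``tandem'' argument is needed.
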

\begin{proof}
 Again, see for instance \cite[\S 4.1]{CheFoc1} or \cite[\S 2.3]{WongYang1}. 
\end{proof}

Setting $\hbar = \frac 2n$, Equation~(\ref{eqn:BigQuantumDilogPiHbarPeriodic}) easily provides the following connection between the discrete and continuous quantum dilogarithms. 

\begin{cor}
\label{cor:DiscreteContinuousQuantumDilogPeriodic}
\pushQED{\qed}
Let $U$ and $V\in \C$ such that $\E^V =1+\E^U$ be fixed. For every odd $n$, set $q = \E^{\frac{2\pi\I}n}$, $u = \E^{\frac1n U}$ and $v=\E^{\frac1n V}$. Then, for every $j\in \Z$,
\begin{equation*}
\QDL^q (u, v \vbar j)=
\E^{-\frac jn V}\ 
\frac
{\Lin \left( \frac\pi2 - \frac\pi n +  \frac U{2n\I}   - {\frac{2\pi j}{n}} \right) }
{\Lin \left(   \frac\pi2 -  \frac\pi n +  \frac U{2n\I}  \right) }.
\qedhere
\end{equation*}
\end{cor}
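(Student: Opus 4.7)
The plan is to iterate the functional equation (\ref{eqn:BigQuantumDilogPiHbarPeriodic}) of Proposition~\ref{prop:BigQuantumDilog} with $\hbar = \frac{2}{n}$ and match the resulting factors to those of the discrete quantum dilogarithm $\QDL^q(u,v\,|\,j) = v^{-j} \prod_{k=1}^{j} (1 + uq^{-2k})$.

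First, I would rewrite (\ref{eqn:BigQuantumDilogPiHbarPeriodic}) with $\pi\hbar = \frac{2\pi}{n}$ and then solve for $\Lin(w - \frac{2\pi}{n})$ in terms of $\Lin(w)$, giving
\begin{equation*}
\Lin \left( w - {\textstyle\frac{2\pi}{n}} \right) = \bigl(1 - \E^{2\I w - \frac{2\pi\I}{n}}\bigr)\, \Lin(w).
\end{equation*}
Iterating this $j$ times with the substitution $w = z_0 - \frac{2\pi(k-1)}{n}$ for $k = 1, 2, \dots, j$ telescopes to
\begin{equation*}
\frac{\Lin \bigl( z_0 - \frac{2\pi j}{n} \bigr)}{\Lin(z_0)} = \prod_{k=1}^{j} \left( 1 - \E^{2\I z_0 - \frac{2\pi\I(2k-1)}{n}} \right).
\end{equation*}

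Next, I would specialize to $z_0 = \frac{\pi}{2} - \frac{\pi}{n} + \frac{U}{2n\I}$ and compute
\begin{equation*}
\E^{2\I z_0} = \E^{\I\pi} \E^{-\frac{2\pi\I}{n}} \E^{\frac{U}{n}} = -q^{-1} u,
\end{equation*}
so that each factor becomes $1 - \bigl(-q^{-1} u\bigr) q^{-(2k-1)} = 1 + uq^{-2k}$. This yields exactly the product $\prod_{k=1}^{j}(1+uq^{-2k})$ appearing in $\QDL^q(u,v\,|\,j)$. Finally, multiplying by $v^{-j} = \E^{-\frac{j}{n} V}$ delivers the stated formula, and the identity extends to all $j \in \Z$ by meromorphic continuation from Proposition~\ref{prop:BigQuantumDilog} together with the $n$-periodicity of $\QDL^q$ in $j$.

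There is no significant obstacle here; the corollary is a direct unwinding of the functional equation. The only subtlety is the sign/shift bookkeeping in the iteration, and in particular the observation that the apparently awkward shift $\frac{\pi}{2} - \frac{\pi}{n} + \frac{U}{2n\I}$ is tailored precisely so that $\E^{2\I z_0}$ produces the factor $-q^{-1}u$, which in turn lines up the continuous quantum dilogarithm's factor $1 - \E^{2\I z - \pi\I\hbar \cdots}$ with the discrete product's factor $1 + u q^{-2k}$.
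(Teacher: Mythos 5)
Your proposal is correct and is exactly the argument the paper intends: the corollary is stated as an immediate consequence of the functional equation (\ref{eqn:BigQuantumDilogPiHbarPeriodic}) with $\hbar=\frac2n$, and your telescoping with $z_0=\frac\pi2-\frac\pi n+\frac U{2n\I}$, using $\E^{2\I z_0}=-q^{-1}u$ so each factor becomes $1+uq^{-2k}$, is the intended unwinding. The only detail to make explicit for the extension to all $j\in\Z$ is that the right-hand side is itself $n$--periodic, which follows since the full-period product is $\prod_{k=1}^n(1+uq^{-2k})=1+u^n=v^n$ (using $n$ odd), matching the factor $\E^{-V}$ gained by $j\mapsto j+n$.
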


This connection with the continuous quantum dilogarithm will enable us to obtain estimates for the discrete quantum logarithmic function $\QDL^q (u, v \vbar j)$. These involve the \emph{Lobachevsky function}, which is the function
$$
\Lambda( \theta) = - \int_0^\theta \log \left| 2 \sin t \right| \, dt
$$
and is classically related to the dilogarithm function $\li(z)$ by the property that
\begin{equation}
 \label{eqn:LobaAndDilog}
 \li\left( \E^{2\I \theta} \right) = {\textstyle\frac{\pi^2}6} -\theta(\pi-\theta) + 2\I \Lambda(\theta)
\end{equation}
for every $\theta \in [0,\pi]$. 

This analysis of $\QDL^q (u, v \vbar j)$ splits into two cases, according to whether the numbers $\frac{2\pi j}n$ corresponding to the indices $j$ are close to $\frac\pi2$ modulo $\pi$ or not. 
We first consider the latter case, where $\frac{2\pi j}n$ stays away from the points $\frac\pi2$ modulo $\pi$. By $n$--periodicity of the function $\QDL^q (u, v \vbar j)$, we can restrict attention to the case where $-\frac\pi2 \leq \frac{2\pi j}n \leq \frac{3\pi}2$. 

\begin{lem}
\label{lem:EstimateQdlFarFromPi/2}
 Fix numbers $U$, $V\in \C$ with $\E^V = 1+ \E^U$, and $\delta>0$. For every odd $n$, let $q=\E^{\frac{2\pi \I}n}$, $u = \E^{\frac1n U}$ and $v = \E^{\frac1n V}$. The following estimates hold for $n$ sufficiently large and for every integer $j \in \Z$.
\begin{enumerate}
 \item If $-\frac\pi2 + \delta \leq \frac{2\pi j}n \leq \frac\pi2 - \delta$, 
 $$
\QDL^q (u, v \vbar j) =  \E^{n \widetilde f \left( \frac{2\pi j}n \right)}\  \widetilde g_n \left( \textstyle\frac{2\pi j}n \right)
 $$
 where $\widetilde f$, $\widetilde g_n \colon \left[ -\frac\pi2+ \delta, \frac\pi2 - \delta \right] \to \C$ are continuous functions such that
\begin{align*}
\widetilde f(t) &= \textstyle \frac 1{2\pi}   \Lambda \left( \textstyle \frac\pi2 - t \right) - \frac1{4\pi} t^2 \I
\\
\widetilde g_n(t) &=    2^{\frac{U-2\pi\I}{4\pi\I}}\ 
 \left(1+ \E^{- 2t \I} \right)^{\frac{2\pi\I - U}{4\pi\I}} 
 \E^{ - \frac {t V}{2\pi}} 
\E^{ O \left( \frac1{n} \right)}.
\end{align*}
 \item If $\frac\pi2 + \delta \leq \frac{2\pi j}n \leq \frac{3\pi}2 - \delta$, 
 $$
\QDL^q (u, v \vbar j) = (-1)^j  \E^{n \widehat f \left( \frac{2\pi j}n \right)}\  \widehat g_n \left( \textstyle\frac{2\pi j}n \right)
 $$
 where $\widehat f$, $\widehat g_n \colon \left[ -\frac\pi2+ \delta, \frac\pi2 - \delta \right] \to \C$ are continuous functions such that
\begin{align*}
\widehat f(t) &=   \textstyle \frac 1{2\pi}    \Lambda \left(  \frac{3\pi}2 - t \right)- \frac1{4\pi} t^2 \I
\\
\widehat g_n(t) &=2^{\frac{U-2\pi\I}{4\pi\I}}\ 
 \E^{-\frac{n\pi\I}4}\  
\left( 1 - \I^n \E^{ \frac 12 U }  \right)
 \left(1+ \E^{- 2t \I} \right)^{\frac{2\pi\I - U}{4\pi\I}} 
 \E^{ - \frac {t V}{2\pi}}
\E^{ O \left( \frac1{n} \right)} .
\end{align*}
\end{enumerate}
In both cases, the constants hidden in the condition ``for $n$ sufficiently large'' and in the Landau symbol $O(\ )$ can be chosen to depend only on $U$, $V$ and $\delta$.

\end{lem}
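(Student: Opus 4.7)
The plan is to translate both parts of the lemma from the discrete into the continuous quantum dilogarithm via Corollary~\ref{cor:DiscreteContinuousQuantumDilogPeriodic}, and then asymptotically expand $\Lin$ using Proposition~\ref{prop:QuantumNonquantumDilog}. Setting $t = \frac{2\pi j}{n}$ and $\epsilon = -\frac{\pi}{n} + \frac{U}{2n\I}$, Corollary~\ref{cor:DiscreteContinuousQuantumDilogPeriodic} writes $\QDL^q(u,v \vbar j) = \E^{-\frac{tV}{2\pi}} \Lin\bigl(\frac\pi2 - t + \epsilon\bigr) / \Lin\bigl(\frac\pi2 + \epsilon\bigr)$, and via $\Lin(z) = \exp\bigl(\frac{n}{4\pi\I}\lin(z)\bigr)$ the problem reduces to expanding $\lin$ to first order in $\epsilon = O(1/n)$.

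For Part~(1), the real part $\frac\pi2 - t$ lies in the open strip $(0, \pi)$, so Proposition~\ref{prop:QuantumNonquantumDilog} replaces $\lin$ by $\li$ at the cost of $O(1/n)$ inside the overall exponent of size $n$. A Taylor expansion of $\li(-\E^{-2\I t}\E^{2\I\epsilon})$ around $-\E^{-2\I t}$, using $\li'(u) = -\log(1-u)/u$ and the identity $-\frac{n\epsilon}{2\pi} = \frac{2\pi\I - U}{4\pi\I}$, contributes the factor $(1+\E^{-2\I t})^{(2\pi\I - U)/(4\pi\I)}$. Then (\ref{eqn:LobaAndDilog}) with $\theta = \frac\pi2 - t$ extracts $\Lambda(\frac\pi2 - t)$ from $\li(-\E^{-2\I t}) = -\frac{\pi^2}{12} + t^2 + 2\I\Lambda(\frac\pi2 - t)$; the quadratic-in-$t$ piece produces the $-\frac{t^2\I}{4\pi}$ term of $\widetilde f(t)$, while the constant $\frac{n\pi\I}{48}$ coming from the $-\frac{\pi^2}{12}$ part cancels against the identical constant produced by the denominator $\Lin(\frac\pi2 + \epsilon)$. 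Specializing the same Taylor expansion to $t = 0$ also yields the $2^{(U - 2\pi\I)/(4\pi\I)}$ factor in $\widetilde g_n$.

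For Part~(2), the argument now has real part outside $(0,\pi)$, so I first apply the functional equation (\ref{eqn:BigQuantumDilogPiPeriodic}) in the form $\Lin(z) = (1+\E^{n\I z})\Lin(z+\pi)$ to shift by $\pi$. A direct calculation using $n\I\epsilon = -\pi\I + \tfrac U2$, $\E^{-2\pi\I j} = 1$, and $\E^{n\pi\I/2} = \I^n$ for odd $n$ gives $\E^{n\I z_j} = -\I^n\E^{U/2}$, hence the factor $(1 - \I^n\E^{U/2})$. The shifted argument has real part $\frac{3\pi}{2} - t \in [\delta, \pi - \delta]$, so I can repeat the expansion from Part~(1) with $\theta = \frac{3\pi}2 - t$ in (\ref{eqn:LobaAndDilog}). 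The key new feature is that this $\theta$ produces a linear-in-$t$ piece $\frac{nt\I}{2} = \pi\I j$, whose exponential is exactly the $(-1)^j$ factor, together with a bulk constant $-\frac{11 n \pi\I}{48}$ which, after subtracting the $\frac{n\pi\I}{48}$ from the denominator, combines into the phase $\E^{-n\pi\I/4}$ appearing in $\widehat g_n$.

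The main obstacle will be the bookkeeping of the many $O(n)$ phase contributions: the $\frac{\pi^2}{12}$-type constants produced by (\ref{eqn:LobaAndDilog}) in both numerator and denominator, the linear-in-$t$ phase in Part~(2) that must reorganize via $\frac{2\pi j}{n} \mapsto \frac{n t\I}{2} = \pi\I j$ into the clean factor $(-1)^j$, and the $\I^n$ factor entering through the functional equation. The cleanness of the claimed formulas depends on several such terms combining correctly, so the Taylor expansion must be carried out to sufficient precision and each phase tracked explicitly. Uniformity of the $O(1/n)$ error terms on the compact intervals $[-\frac\pi2 + \delta, \frac\pi2 - \delta]$ and $[\frac\pi2 + \delta, \frac{3\pi}2 - \delta]$ is routine, following from the uniform statement of Proposition~\ref{prop:QuantumNonquantumDilog} and from boundedness of $\li'$ on compact subsets of $\C - [1, +\infty[$.
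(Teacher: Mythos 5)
Your proposal is correct and follows essentially the same route as the paper's proof: Corollary~\ref{cor:DiscreteContinuousQuantumDilogPeriodic} to pass to $\Lin$, Proposition~\ref{prop:QuantumNonquantumDilog} with a first-order Taylor expansion in the $O(1/n)$ shift to produce the $(1+\E^{-2t\I})^{\frac{2\pi\I-U}{4\pi\I}}$ factor, the identity (\ref{eqn:LobaAndDilog}) to extract $\Lambda$ and the quadratic phase, and the functional equation (\ref{eqn:BigQuantumDilogPiPeriodic}) in Part~(2) yielding the factor $1-\I^n\E^{U/2}$, the $(-1)^j$ from the linear-in-$t$ phase, and the combined constant $\E^{-n\pi\I/4}$. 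Your phase bookkeeping (cancellation of the $\frac{n\pi\I}{48}$ constants, $-\frac{11n\pi\I}{48}-\frac{n\pi\I}{48}=-\frac{n\pi\I}{4}$) matches the paper's computation exactly.
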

\begin{proof}
We will rely on Corollary~\ref{cor:DiscreteContinuousQuantumDilogPeriodic} and Proposition~\ref{prop:QuantumNonquantumDilog}. 

 First consider the case where $-\frac\pi2 +\delta \leq \frac {2\pi j} n \leq \frac\pi2 - \delta $. If $t \in  \left[ -\frac\pi2 +\delta , \frac\pi2 - \delta  \right] $, then 
$$
\textstyle
{\textstyle\frac12} \delta \leq \Re \left( \frac \pi2 -\frac\pi n + \frac U{2n\I} -  t \right)  \leq \pi - {\textstyle\frac12}\delta 
$$
for $n$ large enough. 
Because this quantity is in a compact subset of $\left]0,\pi\right[$, we can combine Proposition~\ref{prop:QuantumNonquantumDilog} with linear approximation to obtain
\begin{align*}
\lin\left( \textstyle \frac \pi2 -\frac\pi n + \frac U{2n\I} -t \right)
&= \li \big(- \E^{\frac{U-2\pi\I}n  - 2t \I } \big) + O\left(\textstyle \frac1{n^2}\right)
\\
&= \textstyle 
\li \left(- \E^{-2t \I } \right) +  \frac{U-2\pi\I}n  \frac{d}{ds}\li \left(- \E^{s- 2t \I } \right)_{|s=0}  + O( \frac1{n^2})\\
&= \textstyle 
\li \left(- \E^{-2t \I } \right)  -  \frac{U-2\pi\I}n  \log \left( 1+ \E^{-  2t\I } \right)  + O \left( \frac1{n^2} \right). 
\end{align*}
It follows that
\begin{align*}
\Lin\left( \textstyle \frac \pi2 -\frac\pi n + \frac U{2n\I} - t  \right)
&=
\exp \big( {\textstyle \frac{n}{4\pi\I}}
\lin \big( \textstyle \frac \pi2 -\frac\pi n + \frac U{2n\I} - t\big) \big)
\\
&=\exp \left( {\textstyle \frac{n}{4\pi\I}}  \li\left(- \E^{- 2t \I } \right) + {\textstyle\frac{2\pi\I -U}{4\pi \I}}  \log \left(1+ \E^{- 2t \I }  \right) + O \left( \textstyle\frac1{n} \right)
\right)
\\
&=
 \left(1+ \E^{- 2t \I } \right)^{\frac{2\pi\I -U}{4\pi\I}}
 \exp \left( {\textstyle \frac{n}{4\pi\I}}  \li\left(-\E^{- 2t \I } \right)
  + O \left( \textstyle\frac1{n} \right)
\right).
\end{align*}

We now use the  relation (\ref{eqn:LobaAndDilog})  connecting the dilogarithm and Lobachevsky functions. This property enables us to conclude that
\begin{equation}
\label{eqn:EstimateQDL1}
\begin{aligned}
\Lin\left( \textstyle \frac \pi2 -\frac\pi n + \frac U{2n\I} - t  \right)
&=
 \left(1+ \E^{- 2t \I} \right)^{\frac{2\pi\I - U}{4\pi\I}}
 \exp \left( \textstyle \frac{n}{2\pi}  \Lambda \left( \textstyle\frac\pi2- t\right)
 + \frac{n\pi}{48}\I -  \frac{n}{4\pi} t^2 \I
  + O \left( \textstyle\frac1{n} \right)
\right).
\end{aligned}
\end{equation}

In particular, the case $t=0$ gives
\begin{align}
\label{eqn:EstimateQDL2}
\Lin\left( \textstyle  \frac \pi2 -\frac\pi n + \frac U{2n\I}  \right)
&=
2^{\frac{2\pi\I - U}{4\pi\I}}
 \exp \left( \textstyle \frac{n\pi}{48} \I
  + O \left( \textstyle\frac1{n} \right)
\right)
\end{align}
since $\Lambda(\frac\pi2)=0$.

Combining Corollary~\ref{cor:DiscreteContinuousQuantumDilogPeriodic} with  (\ref{eqn:EstimateQDL1}) and  (\ref{eqn:EstimateQDL2}), we now obtain that for $t=\frac{2\pi j}n$
 \begin{align*}
\QDL^q (u, v \vbar j)
 &=   \E^{{\frac n{2\pi }}   \Lambda \left(  \frac\pi2 - t \right) - \frac n{4\pi}t^2 \I} \ 
 2^{\frac{U-2\pi\I}{4\pi\I}}\ 
 \left(1+ \E^{- 2t \I} \right)^{\frac{2\pi\I - U}{4\pi\I}} 
 \E^{ - \frac {t V}{2\pi}}\
\E^{ O \left( \frac1{n} \right)}
\end{align*}
for every $j\in \Z$ with $-\frac\pi2 +\delta \leq \frac {2\pi j} n \leq \frac\pi2 - \delta $. This is what we wanted. More precisely,
 $$
\QDL^q (u, v \vbar j) =  \E^{n  \widetilde f \left( \frac {2\pi j} n \right)}\  \widetilde g_n \left(\textstyle  \frac {2\pi j} n \right) 
 $$
 with  $ \widetilde f(t) =  \frac 1{2\pi}    \Lambda \left(  \frac\pi2 - t \right) - \frac1{4\pi} t^2 \I$ and
 $$
 \widetilde g_n(t)  =  2^{\frac{U-2\pi\I}{4\pi\I}}\ 
 \left(1+ \E^{- 2t \I} \right)^{\frac{2\pi\I - U}{4\pi\I}} 
 \E^{ - \frac {t V}{2\pi}} \
\E^{ O \left( \frac1{n} \right)}
 $$
 for every $t \in  \left[ -\frac\pi2 +\delta , \frac\pi2 - \delta  \right] $.
 In addition, if we backtrack through the argument that led us here, we see that the precise definition of $\widetilde g_n(t)$ is
 $$
 \widetilde g_n(t) =  
 \E^{ - \frac {t V}{2\pi}} \,
\frac
{\Lin\left(  \frac \pi2 -\frac\pi n + \frac U{2n\I} -  t \right)}
{\Lin\left( \frac \pi2 -\frac\pi n + \frac U{2n\I} \right)} 
 \E^{ -{\frac n{2\pi }}   \Lambda \left(  \frac\pi2 - t \right) + \frac n{4\pi}t^2 \I} ,
 $$
 which shows that this function is continuous, since $\Lin$ is continuous on the strip $\{z\in \C; 0<\Re z <\pi\}$.  

This concludes the case where $-\frac\pi2 +\delta \leq \frac {2\pi j} n \leq \frac\pi2 - \delta $.

\medskip
For the second case, where $\frac\pi2 +\delta \leq \frac {2\pi j} n \leq \frac{3\pi}2 - \delta $, the argument is very similar with a few small modifications. Indeed, if $t \in \left[ \frac\pi2 +\delta,  \frac{3\pi}2- \delta \right]$,
$$
\textstyle
- \pi+ {\textstyle\frac12} \delta \leq \Re \left(  \textstyle \frac \pi2 -\frac\pi n + \frac U{2n\I} -t \right) \leq - {\textstyle\frac12}\delta 
$$
for $n$ large enough, but  this quantity is not in the interval $\left]0,\pi\right[$ any more. 

We first use the relation (\ref{eqn:BigQuantumDilogPiPeriodic}) of Proposition~\ref{prop:BigQuantumDilog} to express
\begin{equation}
\label{eqn:EstimateQDL3}
\begin{aligned}
\Lin\left(  \textstyle \frac \pi2 -\frac\pi n + \frac U{2n\I} - \frac {2\pi j}n  \right)
&=
\left( 1 + \E^{n\I \left(   \frac \pi2 -\frac\pi n + \frac U{2n\I} - \frac {2\pi j}n  \right)}
\right)
\Lin\left(  \textstyle \frac {3\pi} 2 -\frac\pi n + \frac U{2n\I} - \frac {2\pi j}n  \right)
\\
&=
\big( 1 - \I^n \E^{ \frac 12 U }  \big)
\Lin\left( \textstyle \frac {3\pi} 2 -\frac\pi n + \frac U{2n\I} - \frac {2\pi j}n  \right)
\end{aligned}
\end{equation}
in terms of $ \frac {3\pi} 2 -\frac\pi n + \frac U{2n\I} - \frac {2\pi j}n  $, whose real part is now in a compact subset  of $\left]0,\pi\right[$ for $n$ sufficiently large. 

Then, if we replace $t $  by $t -\pi$ in the argument used to prove (\ref{eqn:EstimateQDL1}),
\begin{equation}
\label{eqn:EstimateQDL4}
\begin{aligned}
\Lin\left( \textstyle \frac {3\pi}2 -\frac\pi n + \frac U{2n\I} - t  \right)
&=
 \left(1+ \E^{- 2t \I} \right)^{\frac{2\pi\I - U}{4\pi\I}}
 \\
 &\qquad\qquad
 \exp \left( \textstyle \frac{n}{2\pi}  \Lambda \left( \textstyle\frac{3\pi}2- t \right) 
 - \frac{11n\pi}{48}\I - \frac{n}{4\pi}t^2 \I  + \frac n2  t \I 
  + O \left( \textstyle\frac1{n} \right)
\right)
\\
&=
 \E^{ \frac n2 t \I }  \left(1+ \E^{- 2t \I} \right)^{\frac{2\pi\I - U}{4\pi\I}}
 \\
 &\qquad\qquad
 \exp \left( \textstyle \frac{n}{2\pi}  \Lambda \left( \textstyle\frac{3\pi}2- t \right) 
 - \frac{11n\pi}{48}\I - \frac{n}{4\pi} t^2 \I 
  + O \left( \textstyle\frac1{n} \right)
\right)
\end{aligned}
\end{equation}
for every $t \in \left[ \frac\pi2 +\delta,  \frac{3\pi}2- \delta \right]$. 

The reason for singling out the term $ \E^{ \frac n2 t \I } $ in the second part of (\ref{eqn:EstimateQDL4}) is that it is equal to $(-1)^j$ when $t= \frac {2\pi j}n $ with $j\in \Z$. Then, combining Corollary~\ref{cor:DiscreteContinuousQuantumDilogPeriodic} with  (\ref{eqn:EstimateQDL2}), (\ref{eqn:EstimateQDL3}) and (\ref{eqn:EstimateQDL4})  gives, for every $j\in \Z$ with $\frac\pi2 +\delta< \frac {2\pi j}n <\frac{3\pi}2- \delta$, 
 \begin{align*}
\QDL^q (u, v \vbar j)
 &= (-1)^j  \ \E^{ {\frac n{2\pi }} \Lambda \left(  \frac{3\pi}2 - t \right) - \frac n{4\pi}t^2 \I} 
\\
&\qquad\qquad
2^{\frac{U-2\pi\I}{4\pi\I}}\ 
 \E^{-\frac{n\pi\I}4}\  
\big( 1 - \I^n \E^{ \frac 12 U }  \big)
 \left(1+ \E^{- 2t \I} \right)^{\frac{2\pi\I - U}{4\pi\I}} 
 \E^{ - \frac {t V}{2\pi}}
\E^{ O \left( \frac1{n} \right)}
\end{align*}
if $t= \frac{2\pi j}n$. 

This  expression is clearly of the form we wanted, namely
 $$
\QDL^q (u, v \vbar j)  = (-1)^j \    \E^{n  \widehat f \left( \frac {2\pi j} n \right)} \ \widehat g_n \left(\textstyle  \frac {2\pi j} n \right)
 $$
 with $\widehat  f(t) = \textstyle \frac 1{2\pi}    \Lambda \left(  \frac{3\pi}2 - t \right)- \frac1{4\pi} t^2 \I$ and 
 $$
 \widehat g_n(t) =
 2^{\frac{U-2\pi\I}{4\pi\I}}\ 
 \E^{-\frac{n\pi\I}4}\  
\big( 1 - \I^n \E^{ \frac 12 U }  \big)
 \left(1+ \E^{- 2t \I} \right)^{\frac{2\pi\I - U}{4\pi\I}} 
 \E^{ - \frac {t V}{2\pi}}\ 
\E^{ O \left( \frac1{n} \right)}
 $$
for every $t \in \left[ \frac\pi2 +\delta , \frac{3\pi}2 - \delta  \right]$.

We  need to be a little more careful about the continuity of the function $\widehat g_n$, because of our use of the two simplifications  that  $\E^{n\I\left( \frac\pi2 - \frac\pi n + \frac U{2n\I}-t\right)} = -\I^n \E^{\frac12 U}$ and $ \E^{ \frac n2 t \I } =(-1)^j$ when $t = \frac{2\pi j}n$. Tracking down the precise definition of $\widehat g_n(t)$ through the argument, we see that
$$
\widehat g_n(t) = \E^{-\frac n2 t\I}
 \E^{ - \frac {t V}{2\pi}}
\frac
{\big( 1 - \I^n \E^{ \frac 12 U }  \big) \Lin\left( \textstyle \frac {3\pi}2 -\frac\pi n + \frac U{2n\I} - t  \right)}
{\Lin\left( \textstyle \frac {\pi}2 -\frac\pi n + \frac U{2n\I}   \right)} \ 
 \E^{ -{\frac n{2\pi }}   \Lambda \left(  \frac{3\pi}2 - t \right) + \frac n{4\pi}t^2 \I} 
$$
for every $t \in \left[ \frac\pi2 +\delta , \frac{3\pi}2 - \delta  \right]$. It is now clear that  $\widehat g_n$ is continuous for $n$ large enough, since  $ \frac {3\pi}2 -\frac\pi n + \frac U{2n\I} - t  $ then belongs to the strip $\{z\in \C; 0<\Re z <\pi\}$ where $\Lin$ is continuous. 
\end{proof}

We are also going to need an estimate of $\QDL^q (u, v \vbar j)$ for the values of $j$ that are not addressed in Lemma~\ref{lem:EstimateQdlFarFromPi/2}. By the $n$--periodicity of $\QDL^q (u, v \vbar j)$, these remaining values of $j$ modulo $n$  correspond to $-\frac\pi2 -\delta \leq \frac{2\pi j}n \leq -\frac\pi2+ \delta$ or $\frac\pi2 -\delta \leq \frac{2\pi j}n \leq \frac\pi2+ \delta$. The upshot of our analysis will be that the corresponding values for $\QDL^q (u, v \vbar j)$ are relatively small.

\begin{lem}
\label{lem:EstimateQdlNearPi/2}
Given $U$, $V\in \C$ with $\E^V=1+ \E^U$, choose $q=\E^{\frac{2\pi \I}n}$, $u= \E^{\frac1n U}$ and $v= \E^{\frac1n V}$ for every $n$. If  $\delta>0$ is sufficiently small and if $n$ is sufficiently large,  then 
$$
\left\lvert  \QDL^q (u, v \vbar j) \right\rvert
=O\left(  \E^{\frac{n}{2\pi} \Lambda \left(2 \delta \right ) }  \right)
$$
whenever  $-\frac\pi2 -\delta \leq \frac{2\pi j}n \leq -\frac\pi2+ \delta$ or $\frac\pi2 -\delta \leq \frac{2\pi j}n \leq \frac\pi2+ \delta$. 

In addition, the constants hidden in the condition ``\,$n$ sufficiently large'' and in the Landau symbol $O(\ )$ can be chosen to depend only on $U$, $V$ and $\delta$. 
\end{lem}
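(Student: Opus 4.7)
The plan is to bootstrap from Lemma~\ref{lem:EstimateQdlFarFromPi/2} by using the product formula $\QDL^q(u,v\vbar j) = v^{-j}\prod_{k=1}^j(1+uq^{-2k})$ directly on the remaining values of $j$. The key input is the $n$--periodicity of $\QDL^q(u,v\vbar j)$ in $j$, a consequence of the identity $\prod_{k=1}^n(1+uq^{-2k}) = 1+u^n = v^n$ (which holds because $\{q^{-2k}\}_{k=1}^n$ runs over all $n$th roots of unity when $n$ is odd). By this periodicity, the interval $\frac{2\pi j}{n} \in [-\frac\pi2 - \delta, -\frac\pi2 + \delta]$ reduces, after shifting $j$ by $n$, to $\frac{2\pi j}{n} \in [\frac{3\pi}2 - \delta, \frac{3\pi}2 + \delta]$. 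The two remaining cases $[\frac\pi2 - \delta, \frac\pi2 + \delta]$ and $[\frac{3\pi}2 - \delta, \frac{3\pi}2 + \delta]$ are then handled identically, using cases~(1) and~(2) of Lemma~\ref{lem:EstimateQdlFarFromPi/2} respectively; I describe the first one.

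Pick $j_0 \in \Z$ to be the largest integer with $\frac{2\pi j_0}{n} \leq \frac\pi2 - \delta$. Since $\widetilde f$ and $\widetilde g_n$ are continuous (and $\widetilde g_n$ is uniformly bounded) on $[-\frac\pi2 + \delta, \frac\pi2 - \delta]$ and since $\Re \widetilde f(\frac\pi2-\delta) = \frac{1}{2\pi}\Lambda(\delta)$, Lemma~\ref{lem:EstimateQdlFarFromPi/2}(1) gives
$$
|\QDL^q(u,v\vbar j_0)| \leq C\, \E^{\frac n{2\pi}\Lambda(\delta)}
$$
for some $C$ depending only on $U$, $V$, $\delta$. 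Then for $j$ in our target range with $j > j_0$, use the telescoping identity
$$
\QDL^q(u,v\vbar j) = v^{-(j-j_0)}\,\QDL^q(u,v\vbar j_0)\prod_{k=j_0+1}^{j}(1+uq^{-2k})
$$
and bound each factor crudely by $|1+uq^{-2k}| \leq 1+|u| = 2 + O(1/n)$. Since $j - j_0 \leq n\delta/\pi + O(1)$, the product contributes at most $\E^{n\delta\log 2/\pi + O(1)}$, while $|v^{-(j-j_0)}|$ stays bounded by a constant depending only on $V$ and $\delta$. Combining these,
$$
|\QDL^q(u,v\vbar j)| \leq C'\,\E^{\frac{n}{2\pi}\left(\Lambda(\delta) + 2\delta\log 2\right)}.
$$

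To close the argument, the elementary inequality $\Lambda(\delta) + 2\delta\log 2 \leq \Lambda(2\delta)$ is needed for all sufficiently small $\delta > 0$. This is equivalent to $\Lambda(2\delta) - \Lambda(\delta) = -\int_\delta^{2\delta}\log|2\sin t|\,dt \geq 2\delta\log 2$. Using $\sin t \leq t$ on $[0,\pi/2]$, the integral is bounded below by $-\int_\delta^{2\delta}\log(2t)\,dt = \delta(1 - \log\delta - 3\log 2)$, which exceeds $2\delta\log 2$ whenever $-\log\delta \geq 5\log 2 - 1$, i.e.\ for $\delta \leq \E/32$.

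The main (minor) subtlety is that the crude estimate $|1+uq^{-2k}| \leq 2 + O(1/n)$ is very wasteful --- these factors are in fact close to $0$ near $k = n/4$ --- but any attempt to sharpen it would require delicate Riemann-sum analysis around the singularity of $\log|2\cos s|$ at $s = \pi/2$, which the freedom to shrink $\delta$ completely circumvents.
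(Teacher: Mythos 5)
Your proof is correct, and it takes a somewhat different route from the paper's at the decisive step. Both arguments reduce the bad range of $j$ to the boundary of the good range: you and the paper both invoke the $n$--periodicity of $\QDL^q(u,v\vbar j)$ and both apply Lemma~\ref{lem:EstimateQdlFarFromPi/2} at (essentially) the index $j_0$ with $\frac{2\pi j_0}n$ just below $\frac\pi2-\delta$ (resp.\ $\frac{3\pi}2-\delta$), obtaining a bound of size $\E^{\frac n{2\pi}\Lambda(\delta)+O(1)}$ there. The difference is in how one propagates from $j_0$ into the bad window. The paper observes that the ratio of consecutive moduli, $\bigl\lvert \QDL^q(u,v\vbar j+1)\bigr\rvert / \bigl\lvert \QDL^q(u,v\vbar j)\bigr\rvert = \lvert 1+uq^{-2j-2}\rvert/\lvert v\rvert$, is $<1$ throughout the window (the numerator is close to $0$ there, exactly the phenomenon you flag as making your crude bound wasteful), so $\lvert\QDL^q(u,v\vbar j)\rvert$ is decreasing on the window and is simply dominated by its value at $j_0$; this gives $O\bigl(\E^{\frac n{2\pi}\Lambda(\delta+\frac{2\pi}n)}\bigr)=O\bigl(\E^{\frac n{2\pi}\Lambda(2\delta)}\bigr)$ with no exponential loss. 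You instead bound each of the at most $\frac{n\delta}\pi+O(1)$ extra factors by $1+\lvert u\rvert=2+O(\frac1n)$, pay an additional $2\delta\log 2$ in the normalized exponent, and recover the stated bound from the elementary inequality $\Lambda(\delta)+2\delta\log 2\le\Lambda(2\delta)$, valid for $\delta\le \E/32$ --- which is admissible since the lemma only claims the estimate for $\delta$ sufficiently small. What the paper's monotonicity trick buys is a sharper intermediate bound and no need for that numerical inequality; what yours buys is robustness (no sign/size analysis of the factors is needed, only their trivial bound). One cosmetic point: you quote $\Re\widetilde f$ at $\frac\pi2-\delta$ rather than at $\frac{2\pi j_0}n$; these arguments differ by at most $\frac{2\pi}n$, so after multiplying by $n$ this only changes the constant $C$ (using that $\widetilde f$ is Lipschitz near $\frac\pi2-\delta$), and the same remark applies to the boundedness of $\widetilde g_n$, resp.\ $\widehat g_n$, which indeed depends only on $U$, $V$, $\delta$. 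With that understood, your constants depend only on $U$, $V$, $\delta$, as required.
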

The term $\Lambda(2\delta)$ in the estimate could be replaced by $\Lambda(\delta')$ for any $\delta' > \delta$. 
\begin{proof}
By definition of $\QDL^q (u, v \vbar j)$, 
\begin{align*}
 \frac{ \left\lvert  \QDL^q (u, v \vbar j+1)   \right\rvert }{\left\lvert  \QDL^q (u, v \vbar j)   \right\rvert }
 &=
 \frac{\left\lvert  1+ u q^{-2j-2}\right\rvert}{\left\lvert v \right\rvert}
 =  \frac{\big\lvert  1  + \E^{ \frac 1n U - \frac{4\pi j}n \I - \frac{4\pi \I}n} \big\rvert}{\big\lvert \E^{\frac1n V}  \big\rvert}.
\end{align*}

The denominator $ \big\lvert \E^{\frac1n V}  \big\rvert$ is bounded away from 0 by a positive constant depending only on $V$. 
Because $-\frac\pi2 -\delta \leq \frac{2\pi j}n \leq -\frac\pi2+ \delta$ or $\frac\pi2 -\delta \leq \frac{2\pi j}n \leq \frac\pi2+ \delta$, the numerator $\big\lvert  1  + \E^{ \frac 1n U - \frac{4\pi j}n \I - \frac{4\pi \I}n} \big\rvert$ is close to $0$ for $\delta$ small and $n$ large. It follows that the above ratio is less than $1$, and therefore that $\left\lvert  \QDL^q (u, v \vbar j)   \right\rvert$ is a decreasing function of $j$ if $\delta$ is small enough and if $n$ is sufficiently large. 

As a consequence if $\frac\pi2 -\delta \leq \frac{2\pi j}n \leq \frac\pi2+ \delta$ or, equivalently, $\frac n4 - \frac{n\delta}{2\pi} \leq j \leq \frac n4 + \frac{n\delta}{2\pi}$, then
\begin{align*}
 \left\lvert  \QDL^q (u, v \vbar j)   \right\rvert
 &\leq \left\lvert  \QDL^q \left(u, v \vbar j \left\lfloor\textstyle  \frac n4 - \frac{n\delta}{2\pi} \right\rfloor \right )   \right\rvert 
\end{align*}
where the floor function $\lfloor a \rfloor $ denotes the largest integer less than or equal to $a$. Since $\frac\pi2 -\delta -\frac{2\pi}n \leq \frac{2\pi}n \left\lfloor  \frac n4 - \frac{n\delta}{2\pi} \right\rfloor   \leq \frac\pi2 -\delta $, we can apply Lemma~\ref{lem:EstimateQdlFarFromPi/2} to estimate $ \left\lvert  \QDL^q \left(u, v \vbar  \left\lfloor\textstyle  \frac n4 - \frac{n\delta}{2\pi} \right\rfloor \right )   \right\rvert $ and conclude that
\begin{align*}
 \left\lvert  \QDL^q (u, v \vbar j)  \right\rvert
 & = O\left(  \E^{\frac{n}{2\pi} \Lambda \left( \frac\pi2-  \frac{2\pi}n \left\lfloor  \frac n4 - \frac{n\delta}{2\pi} \right\rfloor \right ) }  \right)
 \\
 & = O\left(  \E^{\frac{n}{2\pi} \Lambda \left( \delta + \frac{2\pi}n \right ) }  \right) 
  = O\left(  \E^{\frac{n}{2\pi} \Lambda \left(2 \delta \right ) }  \right)
\end{align*}
for $\delta$ sufficiently small and $n$ sufficiently large, using for the second and third equalities the property that the Lobachevsky function $\Lambda$ is increasing on a neighborhood of 0. This proves the required estimate when $\frac\pi2 -\delta \leq \frac{2\pi j}n \leq \frac\pi2+ \delta$. 

Similarly, when $ -\frac \pi2 -\delta \leq \frac{2\pi j}n \leq -\frac\pi2+ \delta$, 
\begin{align*}
 \left\lvert  \QDL^q (u, v \vbar j)   \right\rvert
 &\leq \left\lvert  \QDL^q \left(u, v \vbar  \left\lfloor\textstyle - \frac n4 - \frac{n\delta}{2\pi} \right\rfloor \right )   \right\rvert 
=  \left\lvert  \QDL^q \left(u, v \vbar  \left\lfloor\textstyle  \frac {3n}4 - \frac{n\delta}{2\pi} \right\rfloor \right )   \right\rvert ,
\end{align*}
using the periodicity of $\QDL^q (u, v \vbar j )$ modulo $n$. Then, $\frac{3\pi}2 -\delta -\frac{2\pi}n \leq \frac{2\pi}n \left\lfloor  \frac {3n}4 - \frac{n\delta}{2\pi} \right\rfloor   \leq \frac{3\pi}2 -\delta $ and applying Lemma~\ref{lem:EstimateQdlFarFromPi/2}  again gives
$$
\left\lvert  \QDL^q (u, v \vbar j)   \right\rvert
 = O\left(  \E^{\frac{n}{2\pi} \Lambda \left(-\pi + \delta + \frac{2\pi}n \right ) }  \right) 
  = O\left(  \E^{\frac{n}{2\pi} \Lambda \left(2 \delta \right ) }  \right).
$$
This concludes the proof in this case as well.
\end{proof}

\subsection{A general lemma}
\label{sect:AsymptoticsGeneralRealSum}

After the estimates of \S \ref{sect:QuantumDilogEstimates}, our asymptotic analysis of the sum $\Sigma_n$ will be based on  the following elementary (and relatively classical) lemma.

\begin{lem}
\label{lem:SumRealExponentials}
 Let the continuous function $f \colon [a,b] \to \R$ attain its maximum at a unique point $x_0 \in \left]a,b\right[$, and let $g_n \colon [a,b] \to \C$ be a sequence of continuous functions that uniformly converges to $g_\infty \colon [a,b] \to \C$ with $g_\infty(x_0)\neq 0$. Suppose that $f$ is twice differentiable on a neighborhood of $x_0$ and that $f''(x_0)\neq 0$. Then, as  $n$ tends to $\infty$, 
\begin{align}
\label{eqn:SumRealExponentialsPlain}
& \sum_{j \in \Z, \ a \leq \frac {2\pi j}n \leq b} g_n\left( \textstyle \frac {2\pi j}n \right) \E^{n f \left(\frac {2\pi j}n \right)} 
 \asymp {\textstyle \frac{g_\infty(x_0) }{\sqrt{- 2\pi f''(x_0)}}} \sqrt n \, \E^{n f(x_0)}
 \\
 \label{eqn:SumRealExponentialsAlternating}
 \text{and }
&  \sum_{j \in \Z,\  a \leq \frac {2\pi j}n \leq b} (-1)^j g_n\left( \textstyle \frac {2\pi j}n \right) \E^{n f \left(\frac {2\pi j}n \right)} 
 \prec \sqrt n\,  \E^{n f(x_0)}. 
\end{align}
\end{lem}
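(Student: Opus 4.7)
The first asymptotic is the classical Laplace (or saddle-point) evaluation of a Riemann sum. My plan is first to fix $\epsilon>0$ small enough that $f$ is $C^2$ on $[x_0-\epsilon,x_0+\epsilon]\subset(a,b)$ with $f''(x)\leq f''(x_0)/2<0$ there, and then split the sum (with $x_j=2\pi j/n$) into the inner range $|x_j-x_0|<\epsilon$ and its complement. Because $x_0$ is the unique maximum of $f$, there exists $\eta>0$ with $f(x)\leq f(x_0)-\eta$ outside this neighborhood, so the contribution of the outer range is bounded by $O\bigl(n\,e^{n(f(x_0)-\eta)}\bigr)$, negligible compared to $\sqrt n\,e^{nf(x_0)}$.

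On the inner range, I would Taylor expand $f(x)=f(x_0)+\tfrac12 f''(x_0)(x-x_0)^2+O((x-x_0)^3)$ and use the uniform convergence $g_n\to g_\infty$ together with the continuity of $g_\infty$ at $x_0$ to replace $g_n(x_j)$ by $g_\infty(x_0)(1+o(1))$. With the change of variable $y_j=\sqrt n\,(x_j-x_0)$, the inner sum becomes a Riemann sum of spacing $2\pi/\sqrt n$ for the Gaussian integral
\[
\int_{\R} e^{\frac12 f''(x_0) y^2}\,dy = \sqrt{-2\pi/f''(x_0)};
\]
multiplying by $\sqrt n/(2\pi)$ and by the common prefactor $g_\infty(x_0)\,e^{nf(x_0)}$ yields the claimed equivalent.

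For the alternating sum my plan is to pair consecutive indices $j=2p$ and $j=2p+1$. Each pair contributes
\[
\bigl[g_n(x_{2p})-g_n(x_{2p+1})\bigr]e^{nf(x_{2p})}+g_n(x_{2p+1})\bigl(e^{n[f(x_{2p})-f(x_{2p+1})]}-1\bigr)e^{nf(x_{2p+1})}.
\]
The first piece is bounded by $\omega_n(2\pi/n)\,e^{nf(x_{2p})}$, where $\omega_n$ is the modulus of continuity of $g_n$; since $g_n\to g_\infty$ uniformly and $g_\infty$ is uniformly continuous, one has $\omega_n(2\pi/n)\to 0$, so summing and comparing with the first asymptotic gives $o(\sqrt n\,e^{nf(x_0)})$. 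For the second piece, the mean value theorem yields $n[f(x_{2p})-f(x_{2p+1})]=-2\pi f'(\xi_p)$ for some $\xi_p\in(x_{2p},x_{2p+1})$; in the inner range this is $O(|\xi_p-x_0|)$ thanks to $f'(x_0)=0$ and $C^2$ regularity, so $|e^{-2\pi f'(\xi_p)}-1|=O(|\xi_p-x_0|)$. The problem thus reduces to the Laplace estimate of $\sum|x_j-x_0|\,e^{nf(x_j)}$; the extra vanishing factor costs a $1/\sqrt n$ and yields $O(e^{nf(x_0)})$, still negligible. Outside the inner range, exponential decay of $e^{nf}$ absorbs the alternating behavior as before.

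The main obstacle I expect is the bookkeeping that ensures the three sources of error (Taylor remainder for $f$, uniform convergence $g_n\to g_\infty$, Riemann-sum versus integral discrepancy) remain uniformly of lower order than the main growth $\sqrt n\,e^{nf(x_0)}$. The crucial conceptual point is the cancellation in pairs: it converts the $\sqrt n$ amplification of the direct Laplace method into a vanishing factor $|x_j-x_0|$, whose Laplace integral is $O(1/\sqrt n)$ smaller, and this is precisely what produces the required $\prec\sqrt n\,e^{nf(x_0)}$.
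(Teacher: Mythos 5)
Your proposal is correct in substance and follows essentially the same route as the paper: the first estimate is the same Laplace/Riemann-sum evaluation (the paper performs it by dominated convergence of a step function after the rescaling $y=\sqrt n\,(x-x_0)$, you by a Riemann-sum approximation of the Gaussian integral), and for the alternating sum your pairing of consecutive indices, with each pair split into a $g$-difference term and an $f$-difference term, is exactly the paper's decomposition $S_0=S_1+S_2$; your bounds via the modulus of continuity of $g_n$ and the mean value theorem (with $f'(x_0)=0$ supplying the extra factor $|\xi_p-x_0|$, whose Laplace sum loses the factor $\sqrt n$) play precisely the role of the paper's observation that the limiting amplitudes $1-\E^{2\pi f'(x)}$ and $g_n(x-\frac{2\pi}n)-g_n(x)$ vanish at $x_0$.

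One point to tighten: the lemma only assumes that $f$ is twice differentiable on a neighborhood of $x_0$ with $f''(x_0)\neq 0$, so you are not entitled to an interval on which $f$ is $C^2$ with $f''\leq \frac12 f''(x_0)$, nor to a Taylor remainder $O\big((x-x_0)^3\big)$; only the Peano form $f(x)=f(x_0)+\frac12 f''(x_0)(x-x_0)^2+o\big((x-x_0)^2\big)$ is available. This is easily repaired as in the paper: write $f(x)=f(x_0)+(x-x_0)^2h(x)$ with $h$ continuous and $h(x_0)=\frac12 f''(x_0)<0$, and obtain a uniform bound $h\leq-\epsilon$ on $[a,b]$ from the uniqueness of the maximum; this also furnishes the integrable Gaussian dominating function. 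Relatedly, on a window of \emph{fixed} width $\epsilon$ the replacement $g_n(x_j)=g_\infty(x_0)(1+o(1))$ is not literally valid, since $g_\infty$ need not be close to $g_\infty(x_0)$ across the whole window; either let $\epsilon\to0$ after $n\to\infty$, or argue by dominated convergence in the rescaled variable as the paper does. Your mean value bound $f'(\xi_p)=O(|\xi_p-x_0|)$ only uses differentiability of $f'$ at $x_0$, so that part stands as written.
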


Here the symbols $A\asymp B$ and $A' \prec B'$ respectively mean that, as $n$ tends to $\infty$, the ratio $A/B$ converges to 1 while $A'/B'$ converges to 0. 

In the above estimates, it is crucial that the function $f$ be real valued. 

\begin{proof} By a change of variable we can assume that $x_0=0$, which will alleviate the notation. 
We can then  write
$$
f(x) = f(0) + x^2 h(x)
$$
for a continuous function $h \colon [a,b] \to \R$ with $h(0)=\frac12 f''(0)<0$. 

Also, since $f'(0)=0$ and $f''(0)<0$, there exists $a'<0$ and $b'>0$ such that $f$ is increasing on $[a', 0]$ and decreasing on $[0, b']$. We first consider the terms in $[a', b']$. Then,
\begin{align*}
 {\textstyle \frac {2\pi}{\sqrt n}} \E^{-n f \left(0\right)} 
  \sum_{a' \leq \frac {2\pi j}n \leq b'} g_n\left( \textstyle \frac {2\pi j}n \right) \E^{n f \left(\frac {2\pi j}n \right)} 
 & =
  {\textstyle \frac {2\pi }{\sqrt n}}  \sum_{a' \leq \frac {2\pi j}n \leq b'} g_n \left( \textstyle \frac {2\pi j}n \right) \E^{ \frac{4\pi^2 j^2}n h\left( \frac {2\pi j}n \right)  } 
  \\
&=
\int_{-\infty}^{+ \infty} F_n(x)\, dx
\end{align*}
where $F_n \colon \R \to \C$ is the step function defined by
$$
F_n(x) = 
\begin{cases}
  g_n \left( \textstyle \frac {2\pi j}n \right) \E^{ \frac{4\pi^2 j^2}n  h \left(  \frac {2\pi j}n \right)    } 
  &\text{if } \frac{2\pi j}{\sqrt n} \leq x < \frac{2\pi(j+1)}{\sqrt n} \text{ for some } j\in \Z 
  \\
  & \qquad\qquad\qquad\qquad\qquad \text{ with } {a' \leq \frac {2\pi j}{ n} \leq b' },\\
  0 & \text{otherwise}.
\end{cases}
$$

For a fixed $x$, if  $ \frac{2\pi j}{\sqrt n} \leq x < \frac{2\pi(j+1)}{\sqrt n}$, then $ \frac x{\sqrt n} - \frac{2\pi}{ n}<\frac{2\pi j}{ n} \leq \frac x{\sqrt n} $ and therefore $\frac{2\pi j}n$  tends to $0$ as $n$ tends to $\infty$. It follows that  the function $F_n$  simply converges to the function
$$
F_\infty(x) = g_\infty\left(0\right) \E^{x^2 h(0)}
$$
as $n$ tends to $\infty$. 

Because  the functions $g_n \colon [a,b] \to \C$ uniformly converge to the continuous function $g_\infty$, their moduli $\left\lvert g_n \right\rvert$ are uniformly bounded by a constant $M$. Also, $h(x)<0$ for every $x\neq 0$ since  $f(x)<f(0)$,  while $h(0) = \frac12 f''(0) <0$ is also negative by hypothesis. It follows that there exists $\epsilon>0$ such that $h(x) \leq -\epsilon$ for every $x\in [a,b]$. 

If $x\geq \frac{4\pi}{\sqrt n}$, it follows from the definition of $F_n(x)$ and from the fact that the function $y \mapsto y^2 h(y) = f(y)-f(0)$ is decreasing on $[0,b']$ that
$$
\left\lvert F_n(x) \right\rvert 
\leq M \E^{n\left(\frac x{\sqrt n} - \frac{2\pi}n \right)^2  h\left( \frac x{\sqrt n} - \frac{2\pi}n \right)}
\leq M \E^{ - \left( x - \frac{2\pi}{\sqrt n} \right)^2 \!\epsilon}
\leq M \E^{ -\frac\epsilon4 x^2 },
$$
using the fact that $  \frac{2\pi j}{ n} > \frac x{\sqrt n} - \frac{2\pi}n$ when $ \frac{2\pi j}{\sqrt n} \leq x < \frac{2\pi(j+1)}{\sqrt n}$, and  the na\"\i ve property that $x-\frac{2\pi}{\sqrt n} \geq \frac12 x$ when $x\geq \frac{4\pi}{\sqrt n}$ for the last inequality.

If $x\leq 0$, the fact that the function $y \mapsto y^2 h(y)$ is increasing on $[a',0]$ similarly gives
$$
\left\lvert F_n(x) \right\rvert 
\leq M \E^{n \left( \frac x{\sqrt n} \right)^2 h\left( \frac x{ \sqrt n}  \right)}
\leq M \E^{ -\epsilon x^2}.
$$

Finally, if $0 \leq x < \frac{4\pi}{\sqrt n}$, by definition $F_n(x)$ is equal to $g_n(0)$ or  $g_n\left( \textstyle\frac{2\pi}n \right) \E^{\frac{4\pi^2}n  h\left( \frac {2\pi}n \right)} $. Since $h\left( \frac {2\pi}n \right)<0$, it follows that its absolute value $\left\lvert F_n(x) \right\rvert $ is bounded by $M$. 

Therefore, in all cases, $\left\lvert F_n(x) \right\rvert $ is bounded by the integrable function
$$
F(x) =
\begin{cases}
M \E^{ -\epsilon x^2} &\text{if } x<0 \\
M &\text{if } 0\leq x<\frac{4\pi}{\sqrt n} \\
M \E^{ -\frac\epsilon4 x^2 } &\text{if } x\geq \frac{4\pi}{\sqrt n}.
\end{cases}
$$

By Lebesgue's Dominated Convergence Theorem, it follows that 
\begin{align*}
\lim_{n\to \infty}
 {\textstyle \frac {2\pi}{\sqrt n}} \E^{-n f \left(0\right)} 
  \sum_{a' \leq \frac {2\pi j}n \leq b'} g_n\left( \textstyle \frac {2\pi j}n \right) \E^{n f \left(\frac {2\pi j}n \right)} 
  &=
  \lim_{n\to \infty}
\int_{-\infty}^{+ \infty} F_n(x)\, dx
\\
&= \int_{-\infty}^{+ \infty} F_\infty(x)\, dx = g_\infty(0) \int_{-\infty}^{+ \infty}\E^{x^2 h(0)} dx
\\
&=  \textstyle g_\infty(0) \frac{\sqrt \pi}{\sqrt{-h(0)}} = \frac{\sqrt{2\pi} g_\infty(0)}{\sqrt{-f''(0)}}. 
\end{align*}

 This proves that 
 $$
   \sum_{ a' \leq \frac {2\pi j}n \leq b'} g_n\left( \textstyle \frac {2\pi j}n \right) \E^{n f \left(\frac {2\pi j}n \right)} 
 \asymp {\textstyle \frac{g_\infty(0) }{\sqrt{- 2\pi f''(0)}}} \sqrt n \, \E^{n f(0)} .
 $$
 
 We still have to take care of the terms that are in the intervals $\left [a,a'\right[$ and $\left]b', b\right]$. Since $f$ achieves its maximum only at $0$, there exists $\epsilon>0$ such that $f(x) \leq f(0)- \epsilon$ for every $x$ in $\left [a,a'\right[$ or $\left]b', b\right]$. Also, remember that the moduli $|g_n(x)|$ are uniformly bounded by a constant $M$. Then
$$
  \left|  \sum_{ a \leq \frac {2\pi j}n < a'} g_n\left( \textstyle \frac {2\pi j}n \right) \E^{n f \left(\frac {2\pi j}n \right)} \right|
\leq \left(\textstyle\frac{a'-a}{2\pi} n +1 \right) M \E^{n(f(0) - \epsilon)} \prec \E^{n f(0)} .
 $$ 
 A similar property holds for the interval $\left]b', b\right]$. 
 
 Combining these estimates for the contributions of the intervals $[a', b']$, $\left [a,a'\right[$ and $\left]b', b\right]$, we obtain that
  $$
   \sum_{ a \leq \frac {2\pi j}n \leq b} g_n\left( \textstyle \frac {2\pi j}n \right) \E^{n f \left(\frac {2\pi j}n \right)} 
 \asymp {\textstyle \frac{g_\infty(0) }{\sqrt{- 2\pi f''(0)}}} \sqrt n \, \E^{n f(0)} .
 $$
 which is our required  estimate (\ref{eqn:SumRealExponentialsPlain}) (when $x_0=0$).

To prove (\ref{eqn:SumRealExponentialsAlternating}), we again restrict attention to the contribution of the interval $[a', b']$. We then group consecutive terms in the alternating sum, and consider
$$
S_0 = \sum_{a' \leq \frac {4\pi k}n \leq b'-\frac {2\pi}n} \left( g_n \left( \textstyle \frac {4\pi k}n \right) \E^{n f \left(\frac {4\pi k}n \right)} - g_n\left( \textstyle \frac {4\pi k}n + \frac{2\pi}n \right) \E^{n f \left(  \frac {4\pi k}n + \frac{2\pi}n  \right)}  \right).
$$

We begin with an algebraic manipulation, and split this sum as $S_0 = S_1 + S_2$ with
\begin{align*}
S_1 &=  \sum_{a' \leq \frac {4\pi k}n \leq b'-\frac {2\pi}n} g_n \left(\textstyle \frac{4\pi k}n \right) \check f_n \left(\textstyle \frac{4\pi k}n \right) \E^{n f\left( \frac{4\pi k}n \right)}
\\
S_2 &=  \sum_{a' \leq \frac {4\pi k}n \leq b'-\frac {2\pi}n} \check g_n \left(\textstyle  \frac {4\pi k}n + \frac{2\pi}n  \right)  \E^{n f\left(  \frac {4\pi k}n + \frac{2\pi}n  \right)}
\end{align*}
for the functions
\begin{align*}
 \check f_n (x) &= 1- \E^{n \left( f \left(x + \frac{2\pi}n \right) - f\left(x  \right) \right) }
 \\
 \check g_n (x) &= g_n\left( \textstyle x-\frac{2\pi}n \right) - g_n(x).
\end{align*}

As $n$ tends to infinity, the function $\check  f_n (x)$ uniformly converges to $1-\E^{2\pi f'(x)}$. The argument that we used  to prove (\ref{eqn:SumRealExponentialsPlain}) then shows that
$$
\lim_{n\to \infty} \textstyle \frac{4\pi}{\sqrt n} \E^{-nf(0)} S_1 =\frac{\sqrt{2\pi}}{\sqrt{-f''(0)}} \, g_\infty(0) \left( 1-\E^{2\pi f'(0)} \right) =0
$$
since $f'(0)=0$. 

Similarly, $\check  g_n(x)$ uniformly converges to 0 as $n$ tends to $\infty$. Again, this implies that
$$
\lim_{n\to \infty} \textstyle \frac{4\pi}{\sqrt n} \E^{-nf(0)} S_2  =0.
$$

The combination of these two properties shows that $\frac1{\sqrt n} \E^{-nf(0)} S_0$ tends to 0 as $n$ tends to $\infty$. In other words, $S_0 \prec \sqrt n\, \E^{nf(0)}$. 

For the original sum of (\ref{eqn:SumRealExponentialsAlternating}), the difference
$$
  \sum_{a \leq \frac {2\pi j}n \leq b} (-1)^j g\left( \textstyle \frac {2\pi j}n \right) \E^{n f \left(\frac {2\pi j}n \right)} -S_0
$$
consists of approximately $\frac{a'-a+b-b'}{2\pi}n$ terms whose modulus is bounded by $M \E^{n(f(0)-\epsilon)}$. As in the proof of  (\ref{eqn:SumRealExponentialsPlain}), the contribution of these remaining terms is $\prec \E^{nf(0)}$. With our estimate for $S_0$, this proves (\ref{eqn:SumRealExponentialsAlternating}).

This completes the proof of (\ref{eqn:SumRealExponentialsPlain}) and (\ref{eqn:SumRealExponentialsAlternating}) when $x_0=0$. As indicated at the beginning, the general case follows by a change of variable. 
\end{proof}

\subsection{Asymptotics of the sum $\Sigma_n$}
\label{subsect:AsymptoticsSn}

We now have all the tools we need to find the asymptotics, as $n$ tends to infinity,  of the sum 
$$\Sigma_n = \sum_{i=1}^n \QDL^q (u,v \vbar 2i) q^{2i^2 - \widehat k i}$$
of (\ref{eqn:TheSumSn}). 

\begin{prop}
\label{prop:AsymptoticsSn}
Let $U$, $V \in \C$ with $\E^V=1+\E^U$ be given, as well as an integer $\widehat k \in \Z$. For every odd $n$, set $q = \E^{\frac{2\pi\I}n}$, $u=\E^{\frac1n U}$ and $v=\E^{\frac1n V}$. Then, 
$$
\sum_{i=1}^n \QDL^q (u,v \vbar 2i) q^{2i^2 - \widehat k i}
 \asymp c_n(U,V, \widehat k)
\sqrt n \, \E^{\frac n{2\pi} \Lambda\left( \frac\pi6 \right)}
$$
as $n\to \infty$, where
$$
c_n\big(U,V, \widehat k \big) =
\begin{cases}
 3^{-\frac14}
2^{\frac{U-2\pi\I}{4\pi\I}}\ 
\E^{\frac{-\widehat k \pi\I }6  } \ 
 \E^{ - \frac {V}{6}} 
 \left( \textstyle \frac{1-\I \sqrt3}2 \right)^{\frac{2\pi\I - U}{4\pi\I}}  
 &\text{ if } \widehat k \text{ is odd,}
 \\
 3^{-\frac14}
2^{\frac{U-2\pi\I}{4\pi\I}}\ 
 \E^{-\frac{n \pi\I}4}\  
 \E^{\frac{-2\widehat k \pi\I}3 }\ 
  \E^{ - \frac {2 V}{3}}\ 
\big( 1 - \I^n \E^{ \frac 12 U }  \big)
 \left( \textstyle \frac{1-\I \sqrt3}2 \right)^{\frac{2\pi\I - U}{4\pi\I}}
  &\text{ if } \widehat k \text{ is even.}
\end{cases}
$$
\end{prop}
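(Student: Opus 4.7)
The plan is to reduce $\Sigma_n$ to a form to which Lemma~\ref{lem:SumRealExponentials} applies directly, by reindexing via $k = 2i \bmod n$. Since $n$ is odd, this is a bijection on $\Z/n\Z$; writing $2i = k + ln$ with $i \in \{1,\dots,n\}$, a straightforward computation gives
\[
q^{2i^2 - \widehat k i} = (-1)^{l(\widehat k + 1)} \, e^{\pi\I k(k-\widehat k)/n}.
\]
Since $2i-k = ln$ has the parity of $k$ (as $2i$ is even) and also of $l$ (as $n$ is odd), one has $l \equiv k \pmod 2$, so this factor simplifies to $e^{\pi\I k(k-\widehat k)/n}$ when $\widehat k$ is odd and to $(-1)^k e^{\pi\I k(k-\widehat k)/n}$ when $\widehat k$ is even. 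The parity dichotomy of the conclusion already surfaces at this stage.

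Next I split the reindexed sum over $k \in \Z/n\Z$ according to where $t = 2\pi k/n$ lies (choosing representatives so that $t$ sweeps out $[-\pi/2,3\pi/2)$): the Case~1 range $[-\pi/2+\delta,\pi/2-\delta]$, the Case~2 range $[\pi/2+\delta,3\pi/2-\delta]$, and the two $\delta$-boundary strips around $\pm\pi/2$. Lemma~\ref{lem:EstimateQdlNearPi/2} bounds the boundary contribution by $O\bigl(n\, e^{\frac n{2\pi}\Lambda(2\delta)}\bigr)$, which is negligible compared to the target $\sqrt n\, e^{\frac n{2\pi}\Lambda(\pi/6)}$ once $\delta$ is small enough that $\Lambda(2\delta)<\Lambda(\pi/6)$. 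On the two main regions I substitute the formulas of Lemma~\ref{lem:EstimateQdlFarFromPi/2}, at which point the crucial \emph{cancellation of phases} takes place: the quadratic factor $e^{\pi\I k^2/n} = e^{n\I t^2/(4\pi)}$ arising from $q^{2i^2}$ exactly kills the imaginary piece $-\frac{t^2}{4\pi}\I$ of $\widetilde f(t)$ and $\widehat f(t)$, producing the purely real exponents $f_1(t) = \frac{1}{2\pi}\Lambda(\frac\pi2-t)$ on Case~1 and $f_2(t) = \frac{1}{2\pi}\Lambda(\frac{3\pi}2-t)$ on Case~2.

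Using $f_j'(t) = \frac{1}{2\pi}\log|2\cos t|$ and $f_j''(t) = -\frac{\tan t}{2\pi}$, one verifies that $f_1$ attains its maximum on its interval uniquely at $t_*^{(1)} = \pi/3$ and $f_2$ uniquely at $t_*^{(2)} = 4\pi/3$, both with common maximum value $\frac{1}{2\pi}\Lambda(\pi/6)$ and second derivative $-\frac{\sqrt 3}{2\pi}$, so that $\sqrt{-2\pi f''(t_*)} = 3^{1/4}$. Combining the $(-1)^k$ from the Case~2 formula of Lemma~\ref{lem:EstimateQdlFarFromPi/2} with the reindexing sign, one checks that the Case~1 subsum is non-alternating exactly when $\widehat k$ is odd, and the Case~2 subsum is non-alternating exactly when $\widehat k$ is even. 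Lemma~\ref{lem:SumRealExponentials} then extracts the leading $\sqrt n\, e^{\frac n{2\pi}\Lambda(\pi/6)}$ contribution from the non-alternating subsum and bounds the alternating one as $\prec \sqrt n\, e^{\frac n{2\pi}\Lambda(\pi/6)}$. Evaluating the relevant amplitude $e^{-\I\widehat k t/2}\widetilde g_n(t)$ or $e^{-\I\widehat k t/2}\widehat g_n(t)$ at $t_*$, using $1 + e^{-2 t_*\I} = \frac{1-\I\sqrt 3}{2}$ at both critical points, then assembles into the claimed $c_n(U,V,\widehat k)$.

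The main obstacle I anticipate is bookkeeping: tracking signs through the reindexing and the Case~2 formula of Lemma~\ref{lem:EstimateQdlFarFromPi/2} so that the non-alternating/alternating dichotomy matches the parity of $\widehat k$ correctly. A secondary subtlety is that in Case~2 the amplitude $\widehat g_n(t)$ carries the $n$-dependent but $t$-independent prefactor $e^{-n\pi\I/4}(1-\I^n e^{U/2})$, which fails the uniform-convergence hypothesis of Lemma~\ref{lem:SumRealExponentials}; but since it is independent of $t$ it can simply be factored out of the sum before invoking the lemma, and this is precisely what produces the $n$-dependent factors in the even-$\widehat k$ branch of $c_n$.
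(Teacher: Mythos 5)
Your proposal is correct and follows essentially the same route as the paper's proof: reindex by $j=2i\bmod n$ (the paper does this via $\omega = q^{1/2} = -\E^{\pi\I/n}$ and the $n$-periodicity of $j\mapsto\QDL^q(u,v\vbar j)\,\omega^{j^2-\widehat k j}$, which yields the identical sign $(-1)^{j(\widehat k+1)}$ and phase $\E^{\pi\I j(j-\widehat k)/n}$ you compute), split by the position of $t=2\pi j/n$, kill the quadratic imaginary phase in $\widetilde f,\widehat f$ against the $\E^{\I n t^2/(4\pi)}$ factor, and apply Lemma~\ref{lem:SumRealExponentials} with the critical points $\pi/3$ and $4\pi/3$ and the parity dichotomy deciding which of the two petals is non-alternating. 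The only cosmetic deviation is that you factor the $t$-independent prefactor $\E^{-n\pi\I/4}(1-\I^n\E^{U/2})$ out of $\widehat g_n$ before invoking the lemma, whereas the paper instead passes to subsequences $n\equiv n_0\bmod 8$; both handle the same obstruction and give the same $c_n$.
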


Note that $c_n\big(U,V, \widehat k \big)$ is independent of $n$ when $\widehat k$ is odd, and depends only on the congruence class of $n$ modulo 8 when $\widehat k$ is even. In addition,  the modulus $\left| c_n \big( U,V, \widehat k \big )\right|$ depends only on $n$ modulo 4 in this second case. 

\begin{proof}  Let us focus attention on the case where $\widehat k$ is even. The argument will be essentially identical when $\widehat k$ is odd.

We first rewrite the sum $\Sigma_n$ in a slightly simpler form, by an algebraic manipulation. For this, let $\omega$ be the (unique) square root of $q$ such that $\omega^n=1$, namely $\omega = - \E^{\frac{\pi\I}n}$. Then the function $j \mapsto \QDL^q (u,v \vbar j) \omega^{j^2 - \widehat k j}$ is $n$--periodic. As $i$ goes from 1 to $n$, $j=2i$ also ranges over all numbers from $1$ to $n$ modulo $n$, since $n$ is odd. This enable us to rewrite the sum $\Sigma_n$ as
\begin{equation}
\Sigma_n = \sum_{i=1}^n \QDL^q (u,v \vbar 2i) \omega^{4i^2 - 2 \widehat k i} = \sum_{j=1}^n \QDL^q (u,v \vbar j) \omega^{j^2 - \widehat k j}.
\end{equation}

Choose a small $\delta>0$ as in Lemma~\ref{lem:EstimateQdlNearPi/2}. Then, using the periodicity of its terms, we can write the sum 
$$
\Sigma_n =  \sum_{-\frac\pi2-\delta \leq \frac{2\pi j}n < \frac{3\pi}2 - \delta} \QDL^q (u,v \vbar j) \omega^{j^2 - \widehat k j}
$$
as $\Sigma_n = S_1 + S_2 + S_3 + S_4$ with
\begin{align*}
S_1 &= 
 \kern-10pt   \sum_{-\frac\pi2-\delta \leq \frac{2\pi j}n < -\frac{\pi}2 + \delta}  \kern-10pt 
 \QDL^q (u,v \vbar j) \omega^{j^2 - \widehat k j}
 &
 S_2 &= 
 \kern-10pt  \sum_{-\frac\pi2+\delta \leq \frac{2\pi j}n < \frac\pi2- \delta} \kern-10pt 
  \QDL^q (u,v \vbar j) \omega^{j^2 - \widehat k j}
 \\
 S_3 &= 
 \kern-10pt  \sum_{\frac\pi2-\delta \leq \frac{2\pi j}n < \frac{\pi}2 + \delta} \kern-10pt 
  \QDL^q (u,v \vbar j) \omega^{j^2 - \widehat k j}
 &
 S_4 &= 
 \kern-10pt  \sum_{\frac\pi2+\delta \leq \frac{2\pi j}n <\frac{3\pi}2 - \delta } \kern-10pt 
 \QDL^q (u,v \vbar j) \omega^{j^2 - \widehat k j} .
\end{align*}

By Lemma~\ref{lem:EstimateQdlNearPi/2}, each term of the sum $S_1$ is an $O\left(  \E^{\frac{n}{2\pi} \Lambda \left(2\delta \right ) }  \right)$. Since $S_1$ has $\left\lfloor \frac\delta\pi n \right\rfloor$ terms and $\Lambda(2\delta) < \Lambda(3\delta)$, if follows that
\begin{equation}
\label{eqn:EstimateSumS1}
 S_1 = O\left(  \E^{\frac{n}{2\pi} \Lambda \left(3\delta \right ) }  \right).
\end{equation}

Similarly
\begin{equation}
\label{eqn:EstimateSumS3}
 S_3 = O\left(  \E^{\frac{n}{2\pi} \Lambda \left(3\delta  \right ) }  \right).
\end{equation}

We now consider the sum $S_2$. By the first statement of Lemma~\ref{lem:EstimateQdlFarFromPi/2}, 
\begin{align*}
 S_2 &= 
 \sum_{-\frac\pi2+\delta \leq \frac{2\pi j}n < \frac\pi2- \delta}
  \QDL^q (u,v \vbar j) \omega^{j^2 - \widehat k j}
 \\
 &=
  \sum_{-\frac\pi2+\delta \leq \frac{2\pi j}n < \frac{\pi}2 - \delta} 
   \E^{n \widetilde  f \left( \frac {2\pi j} n \right)} \ 
  \widetilde g_n \left(\textstyle  \frac {2\pi j} n \right) 
  (-1)^{j^2 - \widehat k j}\ 
  \E^{\frac{\pi \I}n (j^2 - \widehat k j)}
  \\
  &=  
   \sum_{-\frac\pi2+\delta \leq \frac{2\pi j}n < \frac{\pi}2 - \delta} 
     (-1)^j \E^{n   f \left( \frac {2\pi j} n \right)} \ 
   g_n \left(\textstyle  \frac {2\pi j} n \right) 
\end{align*}
for the functions $\widetilde f$ and $\widetilde g_n$ of Lemma~\ref{lem:EstimateQdlFarFromPi/2}, and if we set $f(t) = \widetilde f(t) + \frac 1{4\pi}t^2 \I$ and $g_n(t) = \widetilde g_n(t) \E^{-\frac{\widehat k}2  t\I}$. We are here using the fact that $\widehat k$ is even, so that $  (-1)^{j^2 - \widehat k j} = (-1)^j$. 

In particular, $f(t) = \widetilde f(t) + \frac 1{4\pi}t^2 \I = \frac1{2\pi} \Lambda\left( \frac\pi2 - u \right) $.

\begin{rem}
 \label{rem:NoPhase}
 We pause here to emphasize a critical outcome of this computation, which is that $f(t) = \frac1{2\pi} \Lambda\left( \frac\pi2 - u \right) $ is real. This property will enable us to apply  Lemma~\ref{lem:SumRealExponentials}, but is very specific to the special case $\phi=LR$ considered here. In the case of a general diffeomorphism $\phi$, the existence of a phase leads to cancellations requiring more sophisticated methods, such as the saddle point method used in \cite{BWY3}. 
\end{rem}

We now return to the proof, and apply Lemma~\ref{lem:SumRealExponentials}. 
Over the interval $\left [-\frac{\pi}2 + \delta,  \frac{\pi}2 - \delta \right]$, the function $f(t)= \frac1{2\pi} \Lambda\left( \frac\pi2 - t \right)$ admits a unique maximum at $t= \frac\pi3$, where $f\left( \frac \pi 3 \right) = \frac1{2\pi} \Lambda\left( \frac \pi 6 \right)$ and $f''\left( \frac \pi 3 \right) = -\frac{\sqrt3}{2\pi}<0$. The function 
$$
g_n(t) = 2^{\frac{U-2\pi\I}{4\pi\I}}\ 
 \E^{-\frac{\widehat k}2  t\I}\ 
 \left(1+ \E^{- 2t \I} \right)^{\frac{2\pi\I - U}{4\pi\I}} 
 \E^{ - \frac {t V}{2\pi}} 
\E^{ O \left( \frac1{n} \right)}
$$
 uniformly converges to 
$$
g_\infty(t) =   2^{\frac{U-2\pi\I}{4\pi\I}}\ 
 \E^{-\frac{\widehat k}2  t\I}\ 
 \left(1+ \E^{- 2t \I} \right)^{\frac{2\pi\I - U}{4\pi\I}} 
 \E^{ - \frac {t V}{2\pi}}  .
$$
We can therefore apply Lemma~\ref{lem:SumRealExponentials}, which shows that 
\begin{equation}
\label{eqn:EstimateSumS2even}
S_2 \prec 
\sqrt n \, \E^{\frac n{2\pi} \Lambda\left( \frac\pi6 \right)}.
\end{equation}

Finally, we can tackle $S_4$. By the second statement of Lemma~\ref{lem:EstimateQdlFarFromPi/2}, 
\begin{align*}
 S_4 &= 
 \sum_{\frac\pi2+\delta \leq \frac{2\pi j}n < \frac{3\pi}2 - \delta} 
  \QDL^q (u,v \vbar j) \omega^{j^2 - \widehat k j}
 \\
 &=
   \sum_{\frac\pi2+\delta \leq \frac{2\pi j}n < \frac{3\pi}2 - \delta} (-1)^j \E^{n \widehat  f \left( \frac {2\pi j} n \right)} \ 
  \widehat g_n \left(\textstyle  \frac {2\pi j} n \right) 
  (-1)^{j^2 - \widehat k j}\ 
  \E^{\frac{\pi \I}n (j^2 - \widehat k j)}
  \\
  &=  \sum_{\frac\pi2+\delta \leq \frac{2\pi j}n < \frac{3\pi}2 - \delta}  \E^{n   f \left( \frac {2\pi j} n \right)} \ 
   g_n \left(\textstyle  \frac {2\pi j} n \right) 
\end{align*}
for the functions $\widehat f$ and $\widehat g_n$ of Lemma~\ref{lem:EstimateQdlFarFromPi/2}, and where this time $f(t) = \widehat f(t) + \frac 1{4\pi}t^2 \I$ and $g_n(t) = \widehat g_n(t) \E^{-\frac{\widehat k}2  t\I}$.

Again, the function $ f(t)= \frac1{2\pi} \Lambda\left( \frac{3\pi}2 - t \right)$ admits a unique maximum over the interval $\left [\frac\pi2+\delta ,  \frac{3\pi}2 - \delta \right]$, at $t= \frac{4\pi}3$ where $ f\left( \frac {4\pi} 3 \right) = \frac1{2\pi} \Lambda\left( \frac \pi 6 \right)$ and $ f''\left( \frac {4\pi} 3 \right) = -\frac{\sqrt3}{2\pi}$. We cannot quite apply  Lemma~\ref{lem:SumRealExponentials} as stated, as  the function 
$$
g_n(t)= 
2^{\frac{U-2\pi\I}{4\pi\I}}\ 
 \E^{-\frac{n\pi\I}4}\  
 \E^{-\frac{\widehat k}2  t\I}\ 
\big( 1 - \I^n \E^{ \frac 12 U }  \big)
 \left(1+ \E^{- 2t \I} \right)^{\frac{2\pi\I - U}{4\pi\I}} 
 \E^{ - \frac {t V}{2\pi}}
\E^{ O \left( \frac1{n} \right)} 
$$
does not have one, but four limits as $n$ tends to infinity. More precisely, as the odd integer $n$ tends to infinity while $n = n_0 \ \mathrm{mod}\  8$ for a fixed $n_0=1$, $3$, $ 5$ of $7$, the function $ g_n$ uniformly converges to 
$$
 g_{\infty}^{[n_0]}(t)
= 2^{\frac{U-2\pi\I}{4\pi\I}}\ 
 \E^{-\frac{n_0 \pi\I}4}\  
 \E^{-\frac{\widehat k}2  t\I}\ 
\big( 1 - \I^{n_0} \E^{ \frac 12 U }  \big)
 \left(1+ \E^{- 2t \I} \right)^{\frac{2\pi\I - U}{4\pi\I}} 
 \E^{ - \frac {t V}{2\pi}}.
$$

In particular,
$$
 g_{\infty}^{[n_0]}\left(\textstyle \frac{4\pi}3 \right) 
= 2^{\frac{U-2\pi\I}{4\pi\I}}\ 
 \E^{-\frac{n_0 \pi\I}4}\  
 \E^{-\frac{2\widehat k \pi \I}3 }\ 
  \E^{ - \frac {2 V}{3}}\ 
\big( 1 - \I^{n_0} \E^{ \frac 12 U }  \big)
 \left( \textstyle \frac{1-\I \sqrt3}2 \right)^{\frac{2\pi\I - U}{4\pi\I}} .
$$

Then, applying Lemma~\ref{lem:SumRealExponentials} to appropriate subsequences shows that
\begin{equation}
\label{eqn:EstimateSumS4even}
S_4 \asymp 
 {\textstyle \frac{g_\infty^{[n_0]} \left( \frac{4\pi}3 \right)  }{\sqrt{- 2\pi f'' \left( \frac{4\pi}3 \right) }}} \sqrt n \, \E^{n f \left( \frac{4\pi}3 \right) }
=c_n \big(U,V, \widehat k \big)  \sqrt n \ \E^{\frac n{2\pi} \Lambda\left( \frac\pi6 \right)} 
\end{equation}
with
$$
c_n \big(U,V, \widehat k \big) = c_{n_0} \big(U,V, \widehat k \big)=
3^{-\frac14}
2^{\frac{U-2\pi\I}{4\pi\I}}\ 
 \E^{-\frac{n \pi\I}4}\  
 \E^{-\frac{2\widehat k \pi \I}3 }\ 
  \E^{ - \frac {2 V}{3}}\ 
\big( 1 - \I^{n} \E^{ \frac 12 U }  \big)
 \left( \textstyle \frac{1-\I \sqrt3}2 \right)^{\frac{2\pi\I - U}{4\pi\I}}  .
$$

Now, in the sum
$$
\Sigma_n = S_1 + S_2 + S_3 + S_4,
$$
the estimates (\ref{eqn:EstimateSumS1}), (\ref{eqn:EstimateSumS3}), (\ref{eqn:EstimateSumS2even}) and (\ref{eqn:EstimateSumS4even}) then show that $S_1$, $S_2$ and $S_3$ are negligible compared  to $S_4$. Therefore, when $\widehat k$ is even, 
$$
S \asymp S_4 \asymp c_{n} \big(U,V, \widehat k \big) \sqrt n\  \E^{\frac n{2\pi} \Lambda\left( \frac\pi6 \right)} 
$$
as $n$ tends to infinity.

The case when $\widehat k$ is odd is very similar, except that the roles of $S_2$ and $S_4$ are exchanged. The leading term is now $S_2$, which is somewhat simpler. In this case, the function $ f(t)= \frac1{2\pi} \Lambda\left( \frac{\pi}2 - t \right)$ admits its maximum over the interval $\left [-\frac\pi2+\delta ,  \frac{\pi}2 - \delta \right]$ at $t= \frac{\pi}3$, and the same arguments as above give
$$
S \asymp S_2 \asymp c_{n} \big(U,V, \widehat k \big) \sqrt n \ \E^{\frac n{2\pi} \Lambda\left( \frac\pi6 \right)} 
$$
with 
\begin{equation*}
c_{n} \big (U,V, \widehat k \big) =  
3^{-\frac14}
2^{\frac{U-2\pi\I}{4\pi\I}}\ 
\E^{-\frac{\widehat k \pi\I }6 } \ 
 \E^{ - \frac {V}{6}} 
 \left( \textstyle \frac{1-\I \sqrt3}2 \right)^{\frac{2\pi\I - U}{4\pi\I}}   
\end{equation*}
independent of $n$. 
\end{proof}

\section{The limit of $\left| D^q(u) \right|^{\frac1n}$}
\label{sect:AsymptoticsDq}

We now consider the factor
$$
D^q(u) = \prod_{i=1}^n \QDL( u,v \vbar i) = (1+u^n)^{-\frac{n+1}2} \prod_{i=1}^n (1 + u q^{-2i})^{n-i+1}
$$
defined when $u^n =v^n-1\neq -1$. We are interested in the limit of $\left| D^q(u )  \right|^{\frac1n}$ as $n$ odd tends to $\infty$, with $q = \E^{\frac{2\pi\I}n}$ and $u = \E^{\frac1n U}$ for some $U$ such that $\E^U \neq -1$. 

It will actually be more convenient to set $A=2\pi \I -U$, so that $u=q \E^{\frac1n A}$. There are two reasons for this. The first one is that this is the form under which the terms $\left| D^q(u_k )  \right|^{\frac1n} = \left| D^q \big( q \E^{-\frac1n A_k} \big)  \right|^{\frac1n}$ appear in the formula (\ref{eqn:FormulaTrace}) for $\Tr \Lambda_{\phi, r}^q$. The second reason is that the limit is more easily expressed in terms of $A$. In particular, we will see that $\left| D^q \big( q \E^{-\frac1n A} \big)  \right|^{\frac1n}$ is always equal to $1$ when $A$ is real. 

We will see that the modulus $ \left| D^q(u )  \right| $ has two limits as $n$ odd tends to $\infty$, according to the congruence of $n$ modulo $4$. We will determine these limits in three steps. The first one, in \S \ref{subsect:AlgebraicManipDq}, is a simple algebraic manipulation which enables us to rewrite   $\left| D^q(u )  \right| ^{\frac1n}$ as the product of two or three terms, according to the congruence of $n$ modulo $4$. When there are three terms, the limit of the extra term is completely straightforward. The next two steps, in \S \ref{subsect:FirstSumDq} and \S \ref{subsect:SecondSumDq},  are each devoted to the limit of one of the remaining  terms. 

\subsection{An algebraic manipulation}
\label{subsect:AlgebraicManipDq}

Some of the quantum dilogarithms $ \QDL( u,v \vbar i)$ grow exponentially with $n$ while others decrease exponentially. We first rearrange factors so that $D^q(u) $ is expressed as a product of terms reasonably close to 1, while also splitting this product as a preparation for the next steps. 

\begin{lem}
\label{lem:AlgebraicManipDq}
For every odd $n$, let $q= \E^{\frac{2\pi\I}n}$ and $u= q \E^{-\frac 1n A}$ for a fixed number $A\in \C$ with $\E^A \neq -1$. 

 If $n = 1  \ \mathrm{mod}\  4$, then for $m = \frac{n-1}4$
\begin{align*}
 \left| D^q(u )  \right| 
&=   \prod_{j=1}^m \frac
{ \left|  \E^{\frac { -A + 4\pi\I j - \pi\I} n  } -1 \right| ^{2j-1} }
{ \left|  \E^{\frac { - \bar A + 4\pi\I j - \pi\I} n  } -1 \right| ^{2j-1} }
\\
&\qquad\qquad\qquad\qquad
\prod_{j=1}^m \frac
{ \left|   \E^{\frac { -A + 4\pi\I j - \pi\I} n  }  -1 \right| ^{m-j+1}  \left|   \E^{\frac { - \bar A + 4\pi\I j -3 \pi\I} n  } -1\right| ^{m-j+1}}
{ \left|  \E^{\frac { - \bar A + 4\pi\I j - \pi\I} n  }  -1 \right| ^{m-j+1}  \left|   \E^{\frac { -A + 4\pi\I j - 3 \pi\I} n  } -1\right| ^{m-j+1}}.
\end{align*}

 If $n = 3  \ \mathrm{mod}\  4$, then for $m = \frac{n-3}4$
 \begin{align*}
 \left| D^q(u )  \right| 
&=   
\frac{\left|    \E^{\frac { -A +\pi\I} n  }  -1 \right| ^{m+1} }{ \left|  \E^{\frac { -\bar A +\pi\I} n  }  -1 \right| ^{m+1} }\
 \prod_{j=1}^m \frac
{ \left|  \E^{\frac { -A + 4\pi\I j + \pi\I} n  } -1 \right| ^{2j} }
{ \left|  \E^{\frac { - \bar A + 4\pi\I j + \pi\I} n  } -1 \right| ^{2j} }
\\
&\qquad\qquad\qquad\qquad
\prod_{j=1}^m \frac
{ \left|   \E^{\frac { -A + 4\pi\I j +\pi\I} n  }  -1 \right| ^{m-j+1}  \left|   \E^{\frac { - \bar A + 4\pi\I j - \pi\I} n  } -1\right| ^{m-j+1}}
{ \left|  \E^{\frac { - \bar A + 4\pi\I j + \pi\I} n  }  -1 \right| ^{m-j+1}  \left|   \E^{\frac { -A + 4\pi\I j -  \pi\I} n  } -1\right| ^{m-j+1}}.
\end{align*}

In particular,  in both cases  $ \left| D^q(u )  \right| =1$  when $A$ is real. 
\end{lem}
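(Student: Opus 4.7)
My plan is to prove the lemma by a sequence of three algebraic manipulations.

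\emph{Step 1: Absorb the prefactor.} Since $n$ is odd, $q^{-2}$ is a primitive $n$-th root of unity, so as $j$ runs over $\{1, \ldots, n\}$, the values $q^{-2j}$ cover the set of $n$-th roots of unity exactly once. Consequently $\prod_{j=1}^n(1+uq^{-2j}) = \prod_{\zeta^n=1}(1+u\zeta) = 1+u^n$ (the sign $(-1)^{n+1}=1$ because $n$ is odd). This identity allows one to absorb the prefactor and rewrite
\[ D^q(u) = (1+u^n)^{-(n+1)/2}\prod_{j=1}^n(1+uq^{-2j})^{n-j+1} = \prod_{j=1}^n (1+uq^{-2j})^{(n+1)/2 - j}, \]
an expression whose $n$ integer exponents $(n+1)/2 - j$ sum to $0$. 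Taking moduli yields $|D^q(u)| = \prod_{j=1}^n|1+uq^{-2j}|^{(n+1)/2 - j}$.

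\emph{Step 2: Convert to $|\E^w - 1|$-form and reindex.} With $u = q\E^{-A/n}$ and $q^n = 1$, each factor becomes $|1 + \E^{z_j}|$ with $z_j = (-A + 2\pi\I(1-2j))/n$. The elementary identity $|1 + \E^z| = |\E^{z+\pi\I} - 1|$ (from $\E^{\pi\I} = -1$) then gives $|1+uq^{-2j}| = |\E^{(-A + (n+2-4j)\pi\I)/n} - 1|$. For $n = 4m+1$, the substitution $K = m+1-j$ turns the imaginary coefficient $n+2-4j$ into $4K-1$, producing exactly the ``$-\pi\I$'' form $|\E^{(-A + 4\pi\I K - \pi\I)/n} - 1|$ that appears on the right-hand side, while the outer exponent $(n+1)/2 - j$ becomes $m+K$. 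For $n = 4m+3$, the same substitution yields $4K+1$ instead, producing the ``$+\pi\I$'' form of the RHS. As $j$ runs over $\{1,\ldots,n\}$, the index $K$ runs over a window of $n$ consecutive integers.

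\emph{Step 3: Reduce modulo $n$ and match the four types.} The factor $|\E^{(4K \mp 1)\pi\I/n} - 1|$ is $n$-periodic in $K$, so I shift each $K$ by an integer multiple of $n$ into a chosen range, obtaining an index $K'$; the outer exponent $m + K$, determined by the \emph{unshifted} $K$, is unaffected by this shift. The shifted indices partition into four subranges (with one index carrying exponent $0$ and dropping out), and within each subrange I convert the factor to the appropriate RHS form using: (i) $|\E^w - 1| = |\E^{\bar w} - 1|$ (conjugation preserves modulus) to switch between $A$-type and $\bar A$-type; and (ii) the $n$-periodicity of the exponential to convert between the ``$-\pi\I$'' and ``$-3\pi\I$'' forms of the RHS (for $n \equiv 1$), or between the ``$+\pi\I$'' and ``$-\pi\I$'' forms (for $n \equiv 3$). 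A direct calculation then checks that the net exponents produced at each $K'$ by the ratios in the RHS --- namely $(2j-1) + (m-j+1) = m+j$ in the leading subrange, and analogous expressions in the other subranges --- match the LHS exponents $m+K$. The real case $A = \bar A$ is immediate: each ratio on the RHS collapses to $1$, hence $|D^q(u)| = 1$.

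The main obstacle will be the bookkeeping in Step 3: verifying that the four subranges of $K'$ are in bijection with the four RHS types with matching exponents, while handling the slight asymmetry between the two congruence classes of $n$ modulo $4$ --- in particular, the standalone factor at $K = 0$ that appears only in the $n \equiv 3 \pmod 4$ case, corresponding to the otherwise-exceptional exponent.
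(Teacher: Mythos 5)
Your proposal is correct, but it reorganizes the computation rather than reproducing the paper's. The paper never forms your balanced product $\prod_{j=1}^n(1+uq^{-2j})^{\frac{n+1}2-j}$: it instead pairs $\QDL^q(u,v\vbar i)$ with $\QDL^q(u,v\vbar n-i)$, using the same identity $1+u^n=\prod_{k=1}^n(1+uq^{-2k})$ together with complex conjugation to write each such pair as $\prod_{j=1}^i \left|1+\E^{-A/n}q^{-2j+1}\right| \big/ \left|1+\E^{-\bar A/n}q^{-2j+1}\right|$, so that $\left|D^q(u)\right|$ becomes a product over $i=1,\dots,\frac{n-1}2$ of $A/\bar A$ ratios with positive exponents $\frac{n-1}2-i+1$; the stated formula then follows by folding this half-range about its midpoint ($i=m-j+1$ paired with $m+j$, resp.\ $m+j+1$, the middle index supplying the standalone factor when $n\equiv 3$). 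The paper's ordering makes the ratio structure, and hence the remark that $\left|D^q(u)\right|=1$ for real $A$, visible from the first step and keeps all exponents positive; your route yields a cleaner single product with zero-sum exponents but pushes all of the conjugation and the four-type case analysis into the final reduction modulo $n$, which is where the real work concentrates. I checked that this final matching, which you left as ``a direct calculation'', does go through: for $n=4m+1$ the window is $K\in\{-3m,\dots,m\}$ with exponent $m+K$, the index $K=-m$ drops out, $K\in\{1,\dots,m\}$ gives the $A$-type ``$-\pi\I$'' factors with exponent $m+j$, $K=1-j$ with $1\le j\le m$ gives (after conjugation) the $\bar A$-type ``$-3\pi\I$'' factors with exponent $m-j+1$, and the range $K\in\{-3m,\dots,-m-1\}$, shifted by $n$ and split in two, yields exactly the two denominator types with exponents $-(m+j)$ and $-(m-j+1)$; for $n=4m+3$ note that the outer exponent is $m+1+K$ rather than $m+K$, the index $K=-m-1$ drops out, the standalone numerator indeed comes from $K=0$, while its $\bar A$-denominator partner arises from $K=-2m-2$ after the shift. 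So the plan is sound as written; to make it a complete proof you only need to carry out this bookkeeping explicitly in both congruence classes, as sketched above.
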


\begin{proof}
We begin by grouping together the quantum dilogarithms $  \QDL( u,v \vbar i)$ and $ \QDL( u,v \vbar n-i)$. Using the property that
$$
v^n = 1+u^n = \prod_{k=1}^n ( 1+ u q^{-2k} ) = \prod_{k=1}^{n-i} ( 1+ u q^{-2k} )  \prod_{j=1}^i ( 1+ u q^{2j-2} ) 
$$
(setting $j=n-k+1$ for the last term), we obtain that 
\begin{align*}
\big|  \QDL( u,v \vbar i) \QDL( u,v \vbar n-i) \big|
 &= 
\left| v \right|^{-i}   \prod_{j=1}^i \left| 1+ u q^{-2j}\right|  \left| v\right|^{-n+i} \prod_{k=1}^{n-i} \left| 1+ u q^{-2k} \right|
\\
&=
 \prod_{j=1}^i \frac{\left| 1+ u q^{-2j}\right| } {\left| 1+ u q^{2j-2}\right| }
 =
  \prod_{j=1}^i \frac{\left| 1+  u q^{-2j}\right| } {\left| 1+  \bar u q^{-2j+2}\right| }
=
 \prod_{j=1}^i \frac{\left| 1+ \E^{-\frac { A} n} q^{-2j+1}\right| } {\left| 1+  \E^{-\frac {\bar A} n} q^{-2j+1}\right| }. 
\end{align*}

Since  $\QDL( u,v \vbar n)=1$, can therefore rewrite 
\begin{align*}
\left| D^q(u )  \right| 
&= \prod_{i=1}^{\frac{n-1}2} 
\prod_{j=1}^i \frac{\left| 1+ \E^{-\frac { A} n} q^{- 2j+1}\right| } {\left| 1+  \E^{-\frac {\bar A} n} q^{- 2j+1}\right| }
= \prod_{i=1}^{\frac{n-1}2} \frac{\left| 1+ \E^{-\frac { A} n} q^{-2i+1} \right| ^{\frac{n-1}2 -i+1} } {\left| 1+  \E^{-\frac {\bar A} n} q^{-2i+1} \right|^{\frac{n-1}2 -i+1} }.
\end{align*}

We now group these terms into pairs of indices that are symmetric with respect to the midpoint of $\{1, 2, \dots, \frac{n-1}2\}$. 

Namely, when $\frac{n-1}2$ is even and equal to $2m$, we pair  $i=m-j+1$ with $i'=m+j$ for all $j \in \{1,2,\dots, m\}$. Note that 
\begin{align*}
 \left| 1+ \E^{-\frac { A} n} q^{-2i+1} \right| ^{\frac{n-1}2 -i+1} 
 &=
 \left| 1- \E^{\frac { -A+ 4\pi\I j - \pi\I} n } \right| ^{m+j} 
 =
 \left|  \E^{\frac { -A+ 4\pi\I j - \pi\I} n } -1 \right| ^{m+j}
 \\
 \text{ and } \left| 1+ \E^{-\frac { A} n} q^{-2i'+1} \right| ^{\frac{n-1}2 -i'+1} 
 &= \left| 1- \E^{ \frac { -A - 4\pi\I j + 3\pi\I} n } \right| ^{m-j+1} 
 = \left| \E^{ \frac { - \bar A + 4\pi\I j - 3\pi\I} n } -1\right| ^{m-j+1},
\end{align*}
and that the same equalities hold with $A$ replaced with $\bar A$. Therefore,
\begin{align*}
 \left| D^q(u )  \right| 
&=  \prod_{j=1}^m \frac
{ \left|   \E^{\frac { -A+ 4\pi\I j - \pi\I} n } -1 \right| ^{m+j}  \left|  \E^{ \frac { - \bar A + 4\pi\I j - 3\pi\I} n }  -1\right| ^{m-j+1}}
{ \left|    \E^{\frac { -\bar A+ 4\pi\I j - \pi\I} n } -1 \right| ^{m+j}  \left|  \E^{ \frac { -  A + 4\pi\I j - 3\pi\I} n } -1\right| ^{m-j+1}}
\\
&=   \prod_{j=1}^m \frac
{ \left| \E^{\frac { -A+ 4\pi\I j - \pi\I} n }-1 \right| ^{2j-1} }
{ \left|  \E^{\frac { -\bar A+ 4\pi\I j - \pi\I} n } -1 \right| ^{2j-1} }
\\
&\qquad\qquad\prod_{j=1}^m \frac
{ \left| \E^{\frac { -A+ 4\pi\I j - \pi\I} n } -1 \right| ^{m-j+1}  \left| \E^{\frac { -\bar A+ 4\pi\I j - 3\pi\I} n } -1\right| ^{m-j+1}}
{ \left| \E^{\frac { -\bar A+ 4\pi\I j - \pi\I} n }-1 \right| ^{m-j+1}  \left|  \E^{\frac { -A+ 4\pi\I j - 3\pi\I} n } -1\right| ^{m-j+1}}
\end{align*}
when $n = 4m+1$. 

When $\frac{n-1}2$ is odd and equal to $2m+1$, we pair  $i=m-j+1$ with $i'=m+j+1$ for all $j \in \{1,2,\dots, m\}$, which leaves alone the middle term corresponding to $i=m+1$. Then, by a computation similar to the one above, 
\begin{align*}
 \left| D^q(u )  \right| 
&=  
\frac{\left|  \E^{\frac {- A + \pi\I} n } -1 \right| ^{m+1} }{ \left|  \E^{\frac {- \bar  A + \pi\I} n  } -1 \right| ^{m+1} }\
\prod_{j=1}^m \frac
{ \left|  \E^{\frac {- A + 4\pi\I j + \pi\I} n } -1 \right| ^{m+j+1}  \left|  \E^{\frac {- \bar A + 4\pi\I j - \pi\I} n } -1\right| ^{m-j+1}}
{ \left|  \E^{\frac {- \bar A + 4\pi\I j + \pi\I} n }  -1 \right| ^{m+j+1}  \left|  \E^{\frac {- A + 4\pi\I j - \pi\I} n } -1\right| ^{m-j+1}}
\\
&=   
\frac{\left|  \E^{\frac {- A + \pi\I} n } -1 \right| ^{m+1} }{ \left|  \E^{\frac {- \bar  A + \pi\I} n  } -1 \right| ^{m+1} }\
\prod_{j=1}^m \frac
{ \left|  \E^{\frac {- A + 4\pi\I j + \pi\I} n }  -1 \right| ^{2j} }
{ \left|  \E^{\frac {- \bar A + 4\pi\I j + \pi\I} n } -1 \right| ^{2j} }
\\
&\qquad\qquad\qquad\qquad
\prod_{j=1}^m \frac
{ \left|  \E^{\frac {- A + 4\pi\I j + \pi\I} n } -1 \right| ^{m-j+1}  \left|  \E^{\frac {- \bar A + 4\pi\I j - \pi\I} n } -1\right| ^{m-j+1}}
{ \left|  \E^{\frac {- \bar A + 4\pi\I j + \pi\I} n }  -1 \right| ^{m-j+1}  \left|  \E^{\frac {- A + 4\pi\I j - \pi\I} n } -1\right| ^{m-j+1}}
\end{align*}
when $n = 4m+3$. 
\end{proof}

At this point, the split of the formulas of Lemma~\ref{lem:AlgebraicManipDq} as a product of two large products (and an isolated term when $n=3 \ \mathrm{mod}\ 4$) may look artificial. It is justified by the fact that, in \S \ref{subsect:FirstSumDq} and \S \ref{subsect:SecondSumDq}, we will be able to prove that each of the two products has a limit as $n$ tends to $\infty$, by rather different methods. 

Passing to logarithms,  Lemma~\ref{lem:AlgebraicManipDq} shows that, if $n=4m+1$, 
\begin{equation}
\label{eqn:FormulaLogDq4m+1}
\begin{aligned}
 \log  \left| D^q(u )  \right| ^{\frac1n}
&=   \sum_{j=1}^m \frac{2j-1}{4m+1}
\log \frac
{ \left|  \E^{\frac { -A + 4\pi \I j - \pi\I} {4m+1}  } -1 \right| }
{ \left|   \E^{\frac { -\bar A + 4\pi \I j - \pi\I} {4m+1}  } -1 \right| }
\\
&\qquad\qquad 
+ \sum_{j=1}^m \frac{m-j+1}{4m+1}
\log\frac{ \left|  \E^{\frac { -A + 4\pi \I j - \pi\I} {4m+1}  }  -1 \right|   \left|  \E^{\frac { -\bar A + 4\pi \I j -3 \pi\I} {4m+1}  } -1\right| }
{ \left|   \E^{\frac { -\bar A + 4\pi \I j - \pi\I} {4m+1}  }  -1 \right|   \left|  \E^{\frac { -A + 4\pi \I j - 3\pi\I} {4m+1}  } -1\right|}
\end{aligned}
\end{equation}
while, if $n=4m+3$,
\begin{equation}
\label{eqn:FormulaLogDq4m+3}
  \begin{aligned}
 \log  \left| D^q(u )  \right| ^{\frac1n}
&=   
\frac{m+1} {4m+3}
\log \frac{\left|  \E^{\frac { - A + \pi\I} {4m+3} } -1 \right| }{ \left|  \E^{\frac { - \bar A + \pi\I} {4m+3} }  -1 \right|  }\
+ \sum_{j=1}^m \frac{2j}{4m+3}
\log\frac { \left|  \E^{\frac { - A + 4\pi\I j + \pi\I }{4m+3}  } -1 \right| }
{ \left|   \E^{\frac { - \bar A + 4\pi\I j + \pi\I }{4m+3}  }  -1 \right|  }
\\
&\qquad\qquad
+\sum_{j=1}^m \frac{m-j+1}{4m+3}
\log\frac{ \left|   \E^{\frac { - A + 4\pi\I j + \pi\I }{4m+3}  }   -1 \right|   \left|  \E^{\frac { - \bar A + 4\pi\I j - \pi\I }{4m+3}  }  -1\right| }
{ \left|   \E^{\frac { -\bar  A + 4\pi\I j + \pi\I }{4m+3}  }  -1 \right|  \left|  \E^{\frac { - A + 4\pi\I j - \pi\I }{4m+3}  }  -1\right| }. 
\end{aligned}
\end{equation}

We begin with the easiest part, namely the limit of the first term of (\ref{eqn:FormulaLogDq4m+3}). 

\begin{lem}
\label{lem:IsolatedTermDq4m+3}
$$
\lim_{m\to \infty}
\frac{m+1} {4m+3}
\log \frac{\left|  \E^{\frac { - A + \pi\I} {4m+3} } -1 \right| }{ \left|  \E^{\frac { - \bar A + \pi\I} {4m+3} }  -1 \right|  }
 =
 \frac14\log \frac{\left|  A - \pi\I   \right| }{\left|  \bar A - \pi\I   \right| }.
$$
\end{lem}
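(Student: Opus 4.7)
The plan is to observe that as $m \to \infty$, both exponents $\frac{-A+\pi\I}{4m+3}$ and $\frac{-\bar A+\pi\I}{4m+3}$ tend to $0$, so the quantities $\E^{\frac{-A+\pi\I}{4m+3}}-1$ and $\E^{\frac{-\bar A+\pi\I}{4m+3}}-1$ can be analyzed by linear approximation. The hypothesis $\E^A\neq -1$ is equivalent to $A\neq (2k+1)\pi\I$, which guarantees in particular that $A-\pi\I\neq 0$ and $\bar A-\pi\I\neq 0$, so the logarithms involved are well-defined for $m$ large.

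First I would write $\E^z - 1 = z\bigl(1 + O(z)\bigr)$ as $z\to 0$. Applying this with $z_m = \frac{-A+\pi\I}{4m+3}$ yields
$$
\bigl|\E^{\frac{-A+\pi\I}{4m+3}} - 1\bigr| = \frac{|A-\pi\I|}{4m+3}\,\bigl(1+O(\tfrac{1}{m})\bigr),
$$
and similarly for $\bar A$ in place of $A$. Dividing these two estimates, the common factor $\frac{1}{4m+3}$ cancels, giving
$$
\frac{\bigl|\E^{\frac{-A+\pi\I}{4m+3}} - 1\bigr|}{\bigl|\E^{\frac{-\bar A+\pi\I}{4m+3}} - 1\bigr|}
= \frac{|A-\pi\I|}{|\bar A-\pi\I|}\,\bigl(1+O(\tfrac{1}{m})\bigr).
$$
Taking logarithms and using $\log\bigl(1+O(\tfrac1m)\bigr) = O(\tfrac{1}{m})$, the log of the ratio converges to $\log\frac{|A-\pi\I|}{|\bar A-\pi\I|}$ as $m\to\infty$.

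Finally, the coefficient $\frac{m+1}{4m+3}$ converges to $\tfrac14$, and since the log of the ratio has a finite limit, the product of the two converges to $\tfrac14\log\frac{|A-\pi\I|}{|\bar A-\pi\I|}$, as claimed. There is really no obstacle here: the statement is a direct consequence of the linear approximation $\E^z-1\sim z$ at the origin combined with the elementary limit $\frac{m+1}{4m+3}\to\tfrac14$. The only point requiring a brief comment is to confirm that $A-\pi\I$ and $\bar A-\pi\I$ are nonzero, which follows from the standing assumption $\E^A\neq -1$.
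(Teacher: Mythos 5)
Your proof is correct and follows essentially the same route as the paper, which also deduces the statement from the linear approximation $\E^{\frac{-A+\pi\I}{4m+3}}-1=\frac{-A+\pi\I}{4m+3}+O\left(\frac1{m^2}\right)$ together with $\frac{m+1}{4m+3}\to\frac14$. Your additional remark that $\E^A\neq -1$ guarantees $A-\pi\I\neq 0$ and $\bar A-\pi\I\neq 0$ is a fine (and correct) point that the paper leaves implicit.
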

\begin{proof} This is a simple consequence of the fact that
\begin{equation*}
 \E^{\frac { - A + \pi\I} {4m+3}}  -1 \ = \frac { - A + \pi\I} {4m+3}    +O\left( \frac1{m^2} \right). \qedhere
\end{equation*}
\end{proof}

\subsection{The limit of the first sums in (\ref{eqn:FormulaLogDq4m+1}--\ref{eqn:FormulaLogDq4m+3})}
\label{subsect:FirstSumDq}

We now determine the limits, as $m$ tends to $\infty$, of the first sums in (\ref{eqn:FormulaLogDq4m+1}) and (\ref{eqn:FormulaLogDq4m+3}). More precisely,

\begin{lem}
\label{lem:FirstSumDq}
\begin{align*}
\lim_{m\to \infty}
 \sum_{j=1}^m \frac{2j-1}{4m+1}
\log \frac
{ \left|  \E^{\frac { -A + 4\pi \I j - \pi\I} {4m+1}  } -1 \right| }
{ \left|   \E^{\frac { -\bar A + 4\pi \I j - \pi\I} {4m+1}  } -1 \right| }
&=  -  \frac{\log 2}{4\pi}  \Im A,
\\
\lim_{m\to \infty}
\sum_{j=1}^m \frac{2j}{4m+3}
\log\frac { \left|  \E^{\frac { - A + 4\pi\I j + \pi\I }{4m+3}  } -1 \right| }
{ \left|   \E^{\frac { - \bar A + 4\pi\I j + \pi\I }{4m+3}  }  -1 \right|  }
&=  -  \frac{\log 2}{4\pi}  \Im A. 
\end{align*}
\end{lem}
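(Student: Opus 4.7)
The plan is to interpret both sums as Riemann sums for the same explicit integral. I focus on the first sum, associated to $n=4m+1$; the second sum ($n=4m+3$) is handled identically, the only differences being the placement of the sampling points (shifted by $+\pi/(4m+3)$ rather than by $-\pi/(4m+1)$) and the replacement of the prefactor $(2j-1)/(4m+1)$ by $2j/(4m+3)$, both of which have the same leading behavior as $m \to \infty$.

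First, set $s_j := (4j-1)\pi/(4m+1)$, so that as $j$ ranges over $\{1,\dots,m\}$ the points $s_j$ form a near-uniform partition of $(0,\pi)$ with mesh $4\pi/(4m+1)$. Since $|\E^{\I s_j}|=1$, each summand can be rewritten as the logarithm of
$$
\frac{\left|\E^{-A/(4m+1)} - \E^{-\I s_j}\right|}{\left|\E^{-\bar A/(4m+1)} - \E^{-\I s_j}\right|}.
$$
Set $w_j := 1-\E^{-\I s_j}$ and Taylor expand $\E^{-A/(4m+1)} = 1 - A/(4m+1) + O(m^{-2})$, so this is essentially $\log|w_j - A/(4m+1)| - \log|w_j - \bar A/(4m+1)|$.

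Next, use the identity $\log|z-a| - \log|z-\bar a| = \Re\log\tfrac{z-a}{z-\bar a}$ together with $\bar a - a = -2\I\,\Im a$ and a direct expansion in $a/w_j$. Combined with $\Im w_j/|w_j|^2 = \tfrac12\cot(s_j/2)$, which follows from $|w_j|^2 = 4\sin^2(s_j/2)$ and $\Im w_j = \sin s_j$, one expects the leading contribution
$$
\log\frac{|w_j - A/(4m+1)|}{|w_j - \bar A/(4m+1)|} = -\frac{\Im A\,\cot(s_j/2)}{4m+1} + \text{error}.
$$
Substituting this and using $(2j-1)/(4m+1) = s_j/(2\pi) + O(m^{-1})$, the sum becomes a Riemann sum for $-\tfrac{\Im A}{8\pi^2}\int_0^\pi s\cot(s/2)\,ds$. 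By the substitution $u=s/2$ and integration by parts, using the classical identity $\int_0^{\pi/2}\log\sin u\,du = -\tfrac{\pi}{2}\log 2$, this integral equals $2\pi\log 2$, yielding the desired limit $-\tfrac{\log 2}{4\pi}\,\Im A$.

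The main obstacle is uniform control of the Taylor expansion for small indices $j$, where $|w_j|$ is only of order $j/m$ and may be comparable to $|A/(4m+1)|$; in this regime the leading-term formula above is not accurate and the log-ratio can be as large as $O(\log m)$ in absolute value. The standard fix is to split the sum at $j = \lfloor m^{1/4}\rfloor$: below this threshold the crude bound $O(\log m)$ for the log-ratio combines with the small prefactor $(2j-1)/(4m+1) = O(j/m)$ to give a total contribution of $O(m^{-1/2}\log m) = o(1)$; above the threshold one has $|w_j|\geq c\,m^{-3/4}$ and the Taylor expansion applies uniformly, with total summed error also $o(1)$. This concludes the analysis for $n = 4m+1$; the case $n=4m+3$ follows by the verbatim argument with the cosmetic substitutions indicated above.
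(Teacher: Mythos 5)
Your proposal is correct, and its overall strategy coincides with the paper's: rewrite the sum as a Riemann sum whose limit is (after a change of variable) the same integral, $-\frac{\Im A}{8\pi^2}\int_0^\pi s\cot(s/2)\,ds = -2\Im A\int_0^{1/4}t\cot(2\pi t)\,dt$, and then evaluate it to get $-\frac{\log 2}{4\pi}\Im A$. Where you genuinely diverge is in the treatment of the delicate small-$j$ regime and in the evaluation of the integral. The paper's proof spends most of its effort on a sublemma asserting that the \emph{weighted} terms $(2j-1)\log\bigl(\cdots\bigr)$ are uniformly bounded in $j$ and $m$, proved by contradiction in three cases (limit point in $(0,\tfrac14]$, bounded $j_k$, and $j_k\to\infty$ with $j_k/m_k\to 0$); it then cuts the sum at $j=\lfloor\epsilon(4m+1)\rfloor$, bounds the head by $O(\epsilon)$, and lets $\epsilon\to 0$. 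You instead cut at $j=\lfloor m^{1/4}\rfloor$ and use a crude $O(\log m)$ bound on the unweighted log-ratio below the threshold together with uniform Taylor expansion above it; this is a perfectly valid alternative, is more quantitative (it yields an $o(1)$ rate for the discarded block rather than an $O(\epsilon)$-then-limit argument), and avoids the contradiction argument entirely. Two small points to make explicit if you write this up: the crude bound below the threshold still uses the hypothesis $\E^A\neq -1$, since it rests on $\inf_{j}\lvert -A+(4j-1)\pi\I\rvert>0$ (and likewise with $\bar A$, and with $4j+1$ in the $n=4m+3$ case) to get $\lvert\E^{w}-1\rvert\gtrsim 1/m$; and above the threshold the summed second-order errors are $O(\sum_{j\geq m^{1/4}} j^{-2})=o(1)$, which is the estimate implicitly needed for your ``total summed error $o(1)$'' claim. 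Finally, you evaluate the integral elementarily via $\int_0^{\pi/2}\log\sin u\,du=-\frac\pi2\log 2$, whereas the paper integrates by parts into the dilogarithm and uses $\li(1)=\frac{\pi^2}6$, $\li(-1)=-\frac{\pi^2}{12}$; both give $\frac{\log 2}{8\pi}$ for $\int_0^{1/4}t\cot(2\pi t)\,dt$, so this difference is purely cosmetic.
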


\begin{proof} We will focus attention on the first case, coming from (\ref{eqn:FormulaLogDq4m+1}). The other case will essentially be identical. 

We can write the sum as
$$
\frac1{4m+1} \sum_{j=1}^m f_m\left( \frac j{4m+1} \right)
$$
with
$$
f_m(t) = \left( 4m+1 \right) \left( 2t- \frac1{4m+1} \right)  \log \frac{\left| \E^{\frac {- A - \pi \I} {4m+1}+ 4 \pi \I t} -1 \right| } {\left|  \E^{\frac {- \bar A - \pi \I}{4m+1} + 4 \pi\I t} -1 \right| }  .
$$

\begin{sublem}
\label{sublem:FirstSumDqFarFrom0}
For every $\epsilon>0$, 
 $$
\lim_{m\to \infty}  \frac1{4m+1} \sum_{j=\left\lfloor \epsilon (4m+1) \right\rfloor}^m f_m \left( \frac j{4m+1} \right) =-2 \Im A  \int_\epsilon^\frac14 t  \cot 2\pi t \,  dt.
 $$
\end{sublem}
\begin{proof}
 For every $t$ with $0<t\leq \frac14$, 
 $$
 \log\left|  \E^{u + 4 \pi \I t} -1\right| = \log\left|  \E^{4 \pi \I t} -1\right| + \frac12 \Re u + \frac12 \cot 2\pi t \, \Im u + O\left( \left| u\right|^2 \right)
 $$
 by linear approximation. Therefore,
 $$
 f_m(t) = -2t \cot 2\pi t \, \Im A  +O\left( \frac1{m} \right)
 $$
 where the constant hidden in the symbol $O\left( \ \right)$ depends only on a compact subset of $\left]0, \frac14 \right]$ containing $t$. In particular, the function $f_m(t)$ converges to $-2t \cot 2\pi t \, \Im A $ as $m\to \infty$, and this uniformly on compact subsets of $\left]0, \frac14 \right]$. 
 
 The result then immediately follows by a Riemann sum approximation. 
\end{proof}

Based on Sublemma~\ref{sublem:FirstSumDqFarFrom0}, we now expect that 
 $$
\lim_{m\to \infty}  \frac1{4m+1} \sum_{j=1}^m f_m \left( \frac j{4m+1} \right) =-2 \Im A  \int_0^\frac14 t  \cot 2\pi t \,  dt.
 $$
 As a preliminary observation, note that this improper integral is indeed convergent as the integrand has a continuous extension to $\left[ 0, \frac14 \right]$ (this is why we split the equations (\ref{eqn:FormulaLogDq4m+1}--\ref{eqn:FormulaLogDq4m+3}) and the products of Lemma~\ref{lem:AlgebraicManipDq} the way we did). To fully justify the above limit, we need to control the terms $ f_m\left( \frac j{4m+1} \right)$ with $1\leq j < \epsilon(4m+1)$. This is based on the following sublemma. 

\begin{sublem}
\label{sublem:FirstSumDqTermsBounded}
 For a given $A$ with $\E^A \neq -1$, the absolute values of the terms 
 $$
 f_m\left( \frac j{4m+1} \right) =   (2j-1)
\log \frac
{ \left|  \E^{\frac { -A + 4\pi \I j - \pi\I} {4m+1}  } -1 \right| }
{ \left|   \E^{\frac { -\bar A + 4\pi \I j - \pi\I} {4m+1}  } -1 \right| }
 $$
 with $j \in \{1, 2, \dots, m\}$ are uniformly bounded, independently of $j$ and $m$. 
\end{sublem}

Note that, when $\Re A =0$, the function $f_m(t)$ itself is unbounded on the interval $\left[ 0, \frac14\right]$, as it has a vertical asymptotes at $t = \frac{\pi \pm  \Im A }{4\pi(4m+1)}$. 

\begin{proof}
It is easier to use a proof by contradiction. Suppose that the property fails, namely that there exists a sequence of integers $m_k$ and $j_k$, with $1\leq j_k \leq m_k$, such that
$$
\lim_{k \to \infty} \left| f_{m_k } \left( \frac{j_k}{4m_k +1 } \right)  \right| = +\infty.
$$

Passing to a subsequence if necessary, we can arrange that  the sequence $\frac{j_k}{4m_k +1 }$ has a limit $L \in \left[ 0, \frac14\right]$ as $k\to \infty$. We will then distinguish three cases, and reach a contradiction in each of them.

\medskip
\noindent\textsc{Case 1.} The limit $L=\lim_{k \to \infty} \frac{j_k}{4m_k +1 }$ is different from $0$. 

In this case, the points $ \frac{j_k}{4m_k +1 }$ stay in a compact subset of $\left]0,\frac14, \right]$. In the proof of Sublemma~\ref{sublem:FirstSumDqFarFrom0} we saw that,  as $m$ tends to $\infty$, the function $f_m(t)$ converges to $-2t \cot 2\pi t \, \Im A $ uniformly on this compact subset. Therefore, 
$$
\lim_{k \to \infty} \left| f_{m_k } \left( \frac{j_k}{4m_k +1 } \right)  \right| = \left| 2L \cot 2\pi L \, \Im A  \right| \neq + \infty,
$$
which provides the contradiction in this case. 

\medskip
\noindent\textsc{Case 2.} The integers $j_k$ are uniformly bounded. 

After passing to a subsequence, we can assume that the $j_k$ are all equal to some integer $j_0$. Then,
\begin{align*}
  f_{m_k} \left( \frac{j_k}{4m_k +1 } \right) 
  &= (2j_0-1) \log \frac{ \left|  \E^{\frac { -A + 4\pi \I j_0 - \pi\I} {4m_k+1}  } -1 \right| }
{ \left|   \E^{\frac { -\bar A + 4\pi \I j_0 - \pi\I} {4m_k+1}  } -1 \right| }
   = (2j_0-1)\log \frac {\left| \frac { -A + 4\pi \I j_0 - \pi\I} {4m_k+1} + O \left( \frac1{m_k ^2} \right) \right| } {\left|  \frac { - \bar A + 4\pi \I j_0 - \pi\I} {4m_k+1}  + O \left( \frac1{m_k^2} \right) \right|  } 
  \\
  &= (2j_0-1)\log \frac {\left|  { -A + 4\pi \I j_0 - \pi\I} + O \left( \frac1{m_k } \right) \right| } {\left|   { - \bar A + 4\pi \I j_0 - \pi\I}  + O \left( \frac1{m_k} \right) \right|  } 
  \\
  &\to  (2j_0-1)\log \frac {\left|  { -A + 4\pi \I j_0 - \pi\I} \right| } {\left|   { - \bar A + 4\pi \I j_0 - \pi\I}   \right|  }  \text{ as } k \to \infty.
\end{align*}
This limit is finite by our hypothesis that $\E^A \neq -1$, and this again contradicts the unboundedness of the sequence $ \left| f_{m_k} \left( \frac{j_k}{4m_k +1} \right)  \right|$. 

\medskip
When neither Case~1 nor Case~2 hold, we can again pass to a subsequence to be in the following remaining case. 

\medskip
\noindent\textsc{Case 3.} $\lim_{k\to \infty} \frac{j_k}{4m_k +1}=0$ and $\lim_{k\to \infty} j_k = + \infty$. 

Let us write
$$
 f_{m_k} \left( \frac {j_k}{4m_k +1} \right) =   (2j_k-1)  \Re \log \frac{  \E^{\frac { -A + 4\pi \I j_k - \pi\I} {4m_k+1}  } -1 }
{   \E^{\frac { -\bar A + 4\pi \I j_k - \pi\I} {4m_k+1}  } -1  }.
 $$
 
 Note that, as $k \to \infty$,  $\frac1{4m_k +1}$ converges to 0 faster than $\frac{j_k}{4m_k +1}$. Then, 
\begin{align*}
\frac{  \E^{\frac { -A + 4\pi \I j_k - \pi\I} {4m_k+1}  } -1 }
{   \E^{\frac { -\bar A + 4\pi \I j_k - \pi\I} {4m_k+1}  } -1  }
 &= 
 \frac
 { \E^{\frac {4\pi \I j_k} {4m_k+1}} \left( 1 - \frac {A + \pi\I}{4m_k +1} + O\left( \frac1{m_k^2} \right)\right) -1 }
{ \E^{\frac {4\pi \I j_k} {4m_k+1}} \left( 1- \frac { \bar A + \pi\I}{4m_k +1} + O\left( \frac1{m_k^2} \right)\right) -1 }
 \\
 &=
  \frac
 { \left( \E^{\frac {4\pi \I j_k} {4m_k+1}}-1 \right) -  \frac {A + \pi\I}{4m_k +1}  \E^{\frac {4\pi \I j_k} {4m_k+1}}+ O\left( \frac1{m_k^2} \right) }
{ \left( \E^{\frac {4\pi \I j_k} {4m_k+1}} -1\right)  -  \frac { \bar A + \pi\I}{4m_k +1}  \E^{\frac {4\pi \I j_k} {4m_k+1}}+ O\left( \frac1{m_k^2} \right) }
 \\
  &=
 \frac
 {1 - \frac {A + \pi\I}{4m_k +1 }  \left( 1- \E^{-\frac {4\pi \I j_k} {4m_k+1}}   \right)^{-1} + O\left( \frac1{j_k m_k} \right) }
  {1 - \frac {\bar A + \pi\I}{4m_k +1 }  \left(1-  \E^{-\frac {4\pi \I j_k} {4m_k+1}}   \right)^{-1} + O\left( \frac1{j_k m_k} \right) }
 \end{align*}
 using the fact that $\E^{ \frac{4\pi\I j_k}{4m_k +1}} -1 \asymp \frac{\pi \I  j_k}{m_k}$. Another application of the same estimate gives that
 $$
 \frac {\bar A + \pi\I}{4m_k +1 }  \left(1-  \E^{-\frac {4\pi \I j_k} {4m_k+1}}   \right)^{-1}  = O\left( \frac1{j_k} \right),
 $$
from which we conclude that
\begin{align*}
\frac{  \E^{\frac { -A + 4\pi \I j_k - \pi\I} {4m_k+1}  } -1 }
{   \E^{\frac { -\bar A + 4\pi \I j_k - \pi\I} {4m_k+1}  } -1  }
 &=
1+  \frac {\bar A - A}{4m_k +1 }  \left(1-  \E^{\frac {-4\pi \I j_k} {4m_k+1}}   \right)^{-1} + O\left( \frac1{ j_k^2} \right) 
  \\
 &= 1 - \frac{\Im A }{2\pi j_k} + O\left( \frac1{ m_k } \right)  + O\left( \frac1{ j_k^2} \right)  .
 \end{align*}
 It follows that
\begin{align*}
  f_{m_k} \left( \frac {j_k}{4m_k +1} \right) 
  &=  (2j_k-1) \Re \log \left( 1 - \frac{\Im A }{2\pi j_k} + O\left( \frac1{ m_k} \right)  + O\left( \frac1{ j_k^2} \right) \right)
  \\
  &=  - \frac{\Im A }{\pi} + O\left( \frac{j_k}{ m_k} \right)  + O\left( \frac1{ j_k} \right) 
  \\
  &\to  - \frac{\Im A }{\pi}  \text{ as } k \to \infty.
\end{align*}

This provides our final contradiction with the unboundedness of the sequence $\left|   f_{m_k} \left( \frac {j_k}{4m_k +1} \right)  \right|$, and therefore completes the proof of Sublemma~\ref{sublem:FirstSumDqTermsBounded}. 
\end{proof}

We are now ready to complete the proof of Lemma~\ref{lem:FirstSumDq}. 

Indeed, for every $\epsilon>0$, Sublemma~\ref{sublem:FirstSumDqFarFrom0} shows that 
$$
\lim_{m\to \infty}  \frac1{4m+1} \sum_{j=\left\lfloor \epsilon (4m+1) \right\rfloor}^m f_m \left( \frac j{4m+1} \right) =-2 \Im A  \int_\epsilon^\frac14 t  \cot 2\pi t \,  dt.
 $$
 while Sublemma~\ref{sublem:FirstSumDqTermsBounded} proves that 
 $$
  \frac1{4m+1} \sum_{j=1}^{\left\lfloor \epsilon (4m+1) \right\rfloor -1} f_m \left( \frac j{4m+1} \right)= O(\epsilon). 
 $$
 It follows that
  $$
\lim_{m\to \infty}  \frac1{4m+1} \sum_{j=0}^m f_m \left( \frac j{4m+1} \right) =-2 \Im A  \int_0^\frac14 t  \cot 2\pi t \,  dt 
 $$
 after observing that this improper integral converges.
 
 It turns out that the value of the above integral can be explicitly determined.

\begin{sublem}
 $$
 \int_0^\frac14 t  \cot 2\pi t \,  dt = \frac{\log 2}{8\pi}. 
 $$
\end{sublem}
\begin{proof}
 By an algebraic manipulation followed by an integration by parts, 
\begin{align*}
  \int_0^\frac14 t  \cot 2\pi t \,  dt
  &=  \int_0^\frac14  \I t + 2 \I t \frac{\E^{-4\pi\I t}}{1-\E^{-4\pi\I t} }\,  dt
  \\
 &=  \int_0^\frac14  \I t \, dt + \frac1{2\pi} \left[  t \log\left( 1- \E^{-4\pi\I t}\right) \right]_0^{\frac14} -   \frac1{2\pi} \int_0^\frac14  \log\left( 1- \E^{-4\pi\I t}\right)  \,  dt  
 \\
&= \frac\I2 \left[ t^2 \right]_0^{\frac14}+ \frac1{2\pi} \left[  t \log\left( 1- \E^{-4\pi\I t}\right) \right]_0^{\frac14} -   \frac1{8\pi^2 \I} \left[ \li \left( \E^{-4\pi \I t} \right) \right]_0^{\frac14}
\\
&= \frac{\I}{32}+ \frac{\log 2}{8\pi} - 0 +  \frac1{8\pi^2 \I}  \frac{\pi^2}{12} +  \frac1{8\pi^2 \I}  \frac{\pi^2}6 = \frac{\log 2}{8\pi}, 
\end{align*}
using the special values $\li(1)= \frac{\pi^2}6$ and $\li(-1)= -\frac{\pi^2}{12}$ of the dilogarithm function. 
\end{proof}

This computation concludes the proof of Lemma~\ref{lem:FirstSumDq} for the case when $n=4m+1$. 

The case where $n = 4m+3$ is almost identical.  
\end{proof}

\subsection{The limit of the remaining sums in (\ref{eqn:FormulaLogDq4m+1}--\ref{eqn:FormulaLogDq4m+3})}
\label{subsect:SecondSumDq}

We now tackle the last terms of the expressions (\ref{eqn:FormulaLogDq4m+1}--\ref{eqn:FormulaLogDq4m+3}) for $\left| D_n(u) \right|^{\frac1n}$. 

\begin{lem}
\label{lem:SecondSumDq}
\begin{align*}
 \lim_{m\to \infty} 
    \sum_{j=1}^m \frac{m-j+1}{4m+1}
\log\frac{ \left|  \E^{\frac { -A + 4\pi \I j - \pi\I} {4m+1}  }  -1 \right|   \left|  \E^{\frac { -\bar A + 4\pi \I j -3 \pi\I} {4m+1}  } -1\right| }
{ \left|   \E^{\frac { -\bar A + 4\pi \I j - \pi\I} {4m+1}  }  -1 \right|   \left|  \E^{\frac { -A + 4\pi \I j - 3\pi\I} {4m+1}  } -1\right|}
&= \frac14 \log \left| \frac{ \cosh \frac{A-\pi\I}4}{\cosh \frac{A+\pi\I}4} \right|
\\
 \lim_{m\to \infty} 
\sum_{j=1}^m \frac{m-j+1}{4m+3}
\log\frac{ \left|   \E^{\frac { - A + 4\pi\I j + \pi\I }{4m+3}  }   -1 \right|   \left|  \E^{\frac { - \bar A + 4\pi\I j - \pi\I }{4m+3}  }  -1\right| }
{ \left|   \E^{\frac { -\bar  A + 4\pi\I j + \pi\I }{4m+3}  }  -1 \right|  \left|  \E^{\frac { - A + 4\pi\I j - \pi\I }{4m+3}  }  -1\right| }
&= \frac14 \log\left|  \frac {(A+\pi \I)  \sinh  \frac{A-\pi \I}{4} } { (A-\pi \I)  \sinh \frac{A+\pi \I}{4} } \right|.
\end{align*}
\end{lem}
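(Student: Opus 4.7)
The plan is to show that the sum concentrates on a boundary layer near $j=0$ and to evaluate the resulting infinite series via the partial-fraction expansion of the cotangent.

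First I would analyze each term: for fixed $j$, the weight $(m-j+1)/(4m+1) \to \frac14$, and the exponentials in the log ratio all have exponents that tend to $0$, so using $\E^x - 1 \approx x$ the $j$-th term converges to
\begin{equation*}
\ell_j := \frac{1}{4} \log \frac{|(4j-1)\pi\I - A|\,|(4j-3)\pi\I + A|}{|(4j-1)\pi\I + A|\,|(4j-3)\pi\I - A|}.
\end{equation*}
A Taylor expansion at infinity shows $\ell_j = O(1/j^2)$, so $\sum_j \ell_j$ converges. To handle the remaining indices (those $j$ of order $m$, for which $\E^{4\pi\I j/(4m+1)}$ is bounded away from $1$) I would expand the log ratio to first order in $1/m$; the leading correction vanishes thanks to the algebraic identity $(-A - \pi\I) + (-\bar A - 3\pi\I) - (-\bar A - \pi\I) - (-A - 3\pi\I) = 0$, leaving a contribution of size $O(1/m^2)$ per term and $O(1/m)$ in total. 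A standard $\epsilon$-$N$ argument then identifies the limit of the full sum as $\sum_{j \geq 1} \ell_j$.

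Second, I would evaluate the infinite product $P := \prod_{j \geq 1} \exp(4\ell_j)$, so that the target limit equals $\frac14 \log P$. Writing each modulus squared as $|w + C|^2 = (w+C)(w+\bar C)$ with $C = \I\bar A$ factors $P^2 = Q(C)\, Q(\bar C)$, where
\begin{equation*}
Q(C) := \prod_{k \geq 1} \frac{((4k-3)\pi + C)((4k-1)\pi - C)}{((4k-1)\pi + C)((4k-3)\pi - C)}
\end{equation*}
is a convergent product with $Q(0) = 1$. A termwise logarithmic derivative produces two digamma-like series, which combine via the reflection formula $\psi(1-x) - \psi(x) = \pi \cot(\pi x)$ to give $Q'(C)/Q(C) = \frac{1}{4} \bigl[ \cot\bigl(\tfrac{\pi + C}{4}\bigr) + \cot\bigl(\tfrac{\pi - C}{4}\bigr) \bigr] = \frac{1}{2 \cos(C/2)}$. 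Integrating yields the closed form $Q(C) = \sec(C/2) + \tan(C/2)$. Substituting $C = \I\bar A$ and exploiting the identity $|\cosh((A \pm \pi\I)/4)|^2 = \frac12 (\cosh(\Re A/2) \mp \sin(\Im A/2))$ identifies $|Q(\I \bar A)|$ with $|\cosh((A - \pi\I)/4) / \cosh((A + \pi\I)/4)|$, yielding the first formula.

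The second case (for $n = 4m+3$) is parallel: the boundary-layer analysis yields an analogous product $P'$, which after an index shift differs from $P^{-1}$ only by the boundary factor $|\pi\I + A|/|\pi\I - A|$ left over from the missing $k=1$ term in one of the sub-products. The identity $|\sinh((A \pm \pi\I)/2)| = |\cosh(A/2)|$, a consequence of $\sinh((A \pm \pi\I)/2) = \pm \I \cosh(A/2)$, then converts the resulting $\cosh$ ratio into the $\sinh$ ratio stated in the lemma. The main obstacle is the explicit identification of $Q(C)$; once the logarithmic-derivative computation and digamma reflection are in place, the rest is mechanical algebra with hyperbolic identities.
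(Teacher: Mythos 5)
Your proposal is correct, and its overall architecture --- termwise limits concentrated at small $j$, a uniformly summable bound for the remaining terms, then a closed-form evaluation of the limiting series --- is the same as the paper's, but the two main steps are carried out differently. For the tail, the paper splits each term via $(\E^{a+\I\theta}-1)(\E^{a-\I\theta}-1)=(\E^{a}-1)^2+2\E^{a}(1-\cos\theta)$ into quantities of the form $\Re\log\bigl(1+O(1/j^2)\bigr)$ with a bound uniform over all $1\leq j\leq m$, and then applies dominated convergence; your expansion using the cancellation $(-A-\pi\I)+(-\bar A-3\pi\I)-(-\bar A-\pi\I)-(-A-3\pi\I)=0$ accomplishes the same thing, but be aware that besides $j\asymp m$ you must also control the regime $1\ll j\ll m$, where $\E^{4\pi\I j/(4m+1)}$ is \emph{not} bounded away from $1$; there the same cancellation yields a per-term bound $O(1/j^2)$ uniform in $m$, which is what your $\epsilon$--$N$ argument actually needs, so this is only a matter of wording. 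For the evaluation, the paper rewrites the limit terms as $\frac14\log\bigl|1+\frac{(A\mp\pi\I)^2}{16\pi^2(j-\frac12)^2}\bigr|$ (resp.\ with $j^2$) and quotes Euler's products for $\cos z$ and $\frac{\sin z}{z}$, which is shorter; your route through $Q(C)$, its logarithmic derivative, the cotangent reflection and the antiderivative $\log(\sec+\tan)$ is heavier but valid, since indeed $Q(C)=\sin\frac{\pi+C}4\big/\sin\frac{\pi-C}4$ and $Q(-\I A)=\cosh\frac{A-\pi\I}4\big/\cosh\frac{A+\pi\I}4$ exactly. Your handling of the $4m+3$ case by an index shift, giving $P'=P^{-1}\,|A+\pi\I|/|A-\pi\I|$ after the boundary factor disappears, is genuinely different from (and slicker than) the paper's parallel computation with the sine product; just note that the identity you really need there is at quarter arguments, $\sinh\frac{A\mp\pi\I}4=\mp\I\cosh\frac{A\pm\pi\I}4$, not the half-argument version you quoted --- a harmless slip.
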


\begin{proof} As in the proof of Lemma~\ref{lem:FirstSumDq}, we focus attention on the case where $n=4m+1$. The other case is almost identical.

We first rewrite
\begin{align*}
\log\frac{ \left|  \E^{\frac { -A + 4\pi \I j - \pi\I} {4m+1}  }  -1 \right|   \left|  \E^{\frac { -\bar A + 4\pi \I j -3 \pi\I} {4m+1}  } -1\right| }
{ \left|   \E^{\frac { -\bar A + 4\pi \I j - \pi\I} {4m+1}  }  -1 \right|   \left|  \E^{\frac { -A + 4\pi \I j - 3\pi\I} {4m+1}  } -1\right|}
  &=
\Re  \log\frac{ \left(  \E^{\frac { -A + 4\pi \I j - \pi\I} {4m+1}  }  -1 \right)   \left(  \E^{\frac { - A - 4\pi \I j +3 \pi\I} {4m+1}  } -1\right) }
{ \left(   \E^{\frac { - A - 4\pi \I j + \pi\I} {4m+1}  }  -1 \right)   \left(  \E^{\frac { -A + 4\pi \I j - 3\pi\I} {4m+1}  } -1\right) }.
\end{align*}

Let us identify the leading terms of the numerator, namely
\begin{align*}
 \left(  \E^{\frac { -A + 4\pi \I j - \pi\I} {4m+1}  }  -1 \right)   \left(  \E^{\frac { - A - 4\pi \I j +3 \pi\I} {4m+1}  } -1\right) 
 &=\textstyle  \E^{\frac { -2A + 2\pi\I }{4m+1} } -2 \E^{\frac { -A + \pi\I }{4m+1} } \cos \frac{4\pi j - 2\pi}{4m+1}   +1
 \\
 &= \textstyle \left( \E^{\frac { -A + \pi\I }{4m+1} } -1 \right) ^2  +2 \E^{\frac { -A + \pi\I }{4m+1} } \left(1- \cos \frac{4\pi j-2\pi}{4m+1}  \right).
\end{align*}
We see that, as $m$ tends to $\infty$,  the second term
$$\textstyle
2 \E^{\frac { -A + \pi\I }{4m+1} } \left(1- \cos \frac{4\pi j-2\pi}{4m+1}  \right) \asymp  \left( \frac{ 4\pi j -2\pi }{4m+1}  \right)^2
$$
dominates the first term
$$
 \textstyle \left( \E^{\frac { -A + \pi\I }{4m+1} } -1 \right) ^2  \asymp \left( \frac { A - \pi\I }{4m+1}  \right)^2 .
$$

A similar property holds for the denominator. 

It is therefore natural to split
\begin{align}
 \log\frac{ \left|  \E^{\frac { -A + 4\pi \I j - \pi\I} {4m+1}  }  -1 \right|   \left|  \E^{\frac { -\bar A + 4\pi \I j -3 \pi\I} {4m+1}  } -1\right| }
{ \left|   \E^{\frac { -\bar A + 4\pi \I j - \pi\I} {4m+1}  }  -1 \right|   \left|  \E^{\frac { -A + 4\pi \I j - 3\pi\I} {4m+1}  } -1\right|}
  &=
\Re   \log \frac{ \left(  \E^{\frac { -A + 4\pi \I j - \pi\I} {4m+1}  }  -1 \right)   \left(  \E^{\frac { - A - 4\pi \I j +3 \pi\I} {4m+1}  } -1\right) } 
{2 \E^{\frac { -A + \pi\I }{4m+1} } \left(1- \cos \frac{4\pi j-2\pi}{4m+1}  \right) } \notag
\\
  &\qquad
  \label{eqn:SplitSecondTermDq4m+1}
- \Re   \log \frac{ \left(  \E^{\frac { -A - 4\pi \I j + \pi\I} {4m+1}  }  -1 \right)   \left(  \E^{\frac { - A + 4\pi \I j -3 \pi\I} {4m+1}  } -1\right) } 
{2 \E^{\frac { -A - \pi\I }{4m+1} } \left(1- \cos \frac{4\pi j-2\pi}{4m+1}  \right) } 
 \\
&\qquad\qquad + \Re   \log \frac{\E^{\frac { -A + \pi\I } {4m +1}} }{\E^{\frac { -A - \pi\I } {4m +1}} }.  \notag
\end{align}

The last term is trivial since
$$
 \Re   \log \frac{\E^{\frac { -A + \pi\I } {4m +1}} }{\E^{\frac { -A - \pi\I } {4m +1}} } =  \Re   \log  \E^ {\frac{2\pi\I}{4m +1} }   = \log \left| \E^ {\frac{2\pi\I}{4m +1} }  \right|=  0. 
$$

For the first term, our earlier computation shows that 
\begin{align*}
\Re   \log &\frac{ \left(  \E^{\frac { -A + 4\pi \I j - \pi\I} {4m+1}  }  -1 \right)   \left(  \E^{\frac { - A - 4\pi \I j +3 \pi\I} {4m+1}  } -1\right) } 
{2 \E^{\frac { -A + \pi\I }{4m+1} } \left(1- \cos \frac{4\pi j-2\pi}{4m+1}  \right) } 
  = 
\Re   \log\left(  1+  
\frac
{\left( \E^{\frac { -A + \pi\I }{4m+1} }   -1 \right)^2   } 
{2 \E^{\frac { -A + \pi\I }{4m+1} } \left(1- \cos \frac{4\pi j-2\pi}{4m+1}  \right) }  
\right)
\\
&= \Re   \log\bigg( \textstyle 1+ 
\frac12  \left( 1 + O \left( \frac1m \right) \right)  
 \left( \frac{-A+\pi\I}{4m+1} + O\left( \frac1{m^2} \right)\right)^2
  \left( \frac{2(4m+1)^2}{(4\pi j -2\pi)^2} + O\left(1 \right) \right)
\bigg)
\\
&= \textstyle \Re   \log\left(  1+  O\left( \frac1{j^2} \right) \right) = O\left( \frac1{j^2} \right) ,
\end{align*}
using the property that
$$
\frac1 {1-\cos \frac{4\pi j-2\pi}{4m+1}} = \frac{2(4m+1)^2}{(4\pi j -2\pi)^2} + O\left(1 \right)
$$
when  $1 \leq j \leq m$. (This estimate would clearly fail if $j$ was allowed to get  close to $2m$.) 

 As a consequence, if we define 
 $$
 a_m(j) = 
\begin{cases}
\frac {m-j+1}{4m+1} \Re   \log \frac{ \left(  \E^{\frac { -A + 4\pi \I j - \pi\I} {4m+1}  }  -1 \right)   \left(  \E^{\frac { - A - 4\pi \I j +3 \pi\I} {4m+1}  } -1\right) } 
{2 \E^{\frac { -A + \pi\I }{4m+1} } \left(1- \cos \frac{4\pi j-2\pi}{4m+1}  \right) } 
&\text{if } 1\leq j \leq m
\\
0 &\text{if } j > m,
\end{cases}
 $$
then the absolute value of each term $a_m(j)$ is bounded (independently of $m$) by a term $a(j)$ such that the series $\sum_{j=1}^\infty a(j)$ is convergent. In addition, the above estimates show that, for each $j$, $a_m(j)$ converges to 
$$
a_\infty(j)  = \frac14 \Re \log \left( 1 + \frac{\left( -A+\pi\I \right)^2}{16 \pi^2 \left( j-\frac12 \right)^2} \right)
$$
 as $m$ tends to $\infty$. By Lebesgue dominated convergence (or an elementary argument), it follows that 
 $$
 \lim_{m\to \infty} \sum_{j=1}^\infty a_m(j) =  \sum_{j=1}^\infty a_\infty(j) =  \sum_{j=1}^\infty \frac14 \log \left| 1+ \frac{\left( -A+\pi\I \right)^2}{16 \pi^2 \left( j-\frac12\right)^2}  \right| .
 $$
Using Euler's formula that
 $$
 \prod_{j=1}^\infty \left( 1 - \frac {z^2}{\pi^2 \left( j-\frac12\right)^2} \right) =\cos z,
 $$
 this infinite sum is equal to 
 $$
  \frac14 \log\left|  \cosh  \frac{A-\pi \I}{4}  \right| .
 $$
 
 Therefore,
 $$
 \lim_{m\to \infty} \sum_{j=1}^m  \frac{m-j+1}{4m+1}  \Re   \log \frac{ \left(  \E^{\frac { -A + 4\pi \I j - \pi\I} {4m+1}  }  -1 \right)   \left(  \E^{\frac { - A - 4\pi \I j +3 \pi\I} {4m+1}  } -1\right) } 
{2 \E^{\frac { -A + \pi\I }{4m+1} } \left(1- \cos \frac{4\pi j-2\pi}{4m+1}  \right) } =     \frac14 \log\left|  \cosh  \frac{A-\pi \I}{4}  \right|  .
 $$

A similar argument shows that
  $$
 \lim_{m\to \infty} \sum_{j=1}^m  \frac{m-j+1}{4m+1} \Re   \log \frac{ \left(  \E^{\frac { -A - 4\pi \I j + \pi\I} {4m+1}  }  -1 \right)   \left(  \E^{\frac { - A + 4\pi \I j -3 \pi\I} {4m+1}  } -1\right) } 
{2 \E^{\frac { -A - \pi\I }{4m+1} } \left(1- \cos \frac{4\pi j-2\pi}{4m+1}  \right) } =    \frac14 \log\left|  \cosh  \frac{A+\pi \I}{4}  \right|  .
 $$

Combining these two limits with (\ref{eqn:SplitSecondTermDq4m+1}), this proves the first statement of Lemma~\ref{lem:SecondSumDq}, namely that 
$$  \lim_{m\to \infty} \sum_{j=1}^m   \frac{m-j+1}{4m+1}
 \log\frac{ \left|  \E^{\frac { -A + 4\pi \I j - \pi\I} {4m+1}  }  -1 \right|   \left|  \E^{\frac { -\bar A + 4\pi \I j -3 \pi\I} {4m+1}  } -1\right| }
{ \left|   \E^{\frac { -\bar A + 4\pi \I j - \pi\I} {4m+1}  }  -1 \right|   \left|  \E^{\frac { -A + 4\pi \I j - 3\pi\I} {4m+1}  } -1\right|}
=
 \frac14 \log\left|  \frac { \cosh  \frac{A-\pi \I}{4} } {  \cosh \frac{A+\pi \I}{4} } \right| . 
$$

The second statement of Lemma~\ref{lem:SecondSumDq} is proved by a very similar argument. We will just point out the main steps. As in (\ref{eqn:SplitSecondTermDq4m+1}), we first split
\begin{align}
 \log\frac{ \left|  \E^{\frac { -A + 4\pi \I j + \pi\I} {4m+1}  }  -1 \right|   \left|  \E^{\frac { -\bar A + 4\pi \I j - \pi\I} {4m+1}  } -1\right| }
{ \left|   \E^{\frac { -\bar A + 4\pi \I j + \pi\I} {4m+1}  }  -1 \right|   \left|  \E^{\frac { -A + 4\pi \I j - \pi\I} {4m+1}  } -1\right|}
  &=
\Re   \log \frac{ \left(  \E^{\frac { -A + 4\pi \I j + \pi\I} {4m+1}  }  -1 \right)   \left(  \E^{\frac { - A - 4\pi \I j + \pi\I} {4m+1}  } -1\right) } 
{2 \E^{\frac { -A + \pi\I }{4m+1} } \left(1- \cos \frac{4\pi j}{4m+1}  \right) } \notag
\\
  &\qquad
  \label{eqn:SplitSecondTermDq4m+3}
- \Re   \log \frac{ \left(  \E^{\frac { -A - 4\pi \I j - \pi\I} {4m+1}  }  -1 \right)   \left(  \E^{\frac { - A + 4\pi \I j - \pi\I} {4m+1}  } -1\right) } 
{2 \E^{\frac { -A - \pi\I }{4m+1} } \left(1- \cos \frac{4\pi j}{4m+1}  \right) } 
 \\
&\qquad\qquad + \Re   \log \frac{\E^{\frac { -A + \pi\I } {4m +1}} }{\E^{\frac { -A - \pi\I } {4m +1}} }  \notag
\end{align}
and observe that $ \Re   \log \frac{\E^{\frac { -A + \pi\I } {4m +1}} }{\E^{\frac { -A - \pi\I } {4m +1}} }=0$.

The same estimates as before then show that the limit
 $$
 \lim_{m\to \infty} \sum_{j=1}^m  \frac{m-j+1}{4m+3} \Re     \log \frac{ \left(  \E^{\frac { -A + 4\pi \I j + \pi\I} {4m+1}  }  -1 \right)   \left(  \E^{\frac { - A - 4\pi \I j + \pi\I} {4m+1}  } -1\right) } 
{2 \E^{\frac { -A + \pi\I }{4m+1} } \left(1- \cos \frac{4\pi j}{4m+1}  \right) } 
 $$
 is equal to
 $$
  \sum_{j=1}^\infty \frac14 \log \left| 1+ \frac{\left( -A+\pi\I \right)^2}{16 \pi^2 j^2}  \right|  = \frac14 \log \left| \frac{\sinh \frac{A-\pi\I}4}{ \frac{A-\pi\I}4} \right|,
 $$
 using the other Euler formula that
 $$
 \prod_{j=1}^\infty \left( 1 - \frac {z^2}{\pi^2 j^2} \right) =\frac{\sin z}z.  
 $$
 
 Similarly,
 $$
 \lim_{m\to \infty} \sum_{j=1}^m  \frac{m-j+1}{4m+3}  \Re   \log \frac{ \left(  \E^{\frac { -A - 4\pi \I j - \pi\I} {4m+1}  }  -1 \right)   \left(  \E^{\frac { - A + 4\pi \I j - \pi\I} {4m+1}  } -1\right) } 
{2 \E^{\frac { -A - \pi\I }{4m+1} } \left(1- \cos \frac{4\pi j}{4m+1}  \right) } 
= \frac14 \log \left| \frac{\sinh \frac{A+\pi\I}4}{ \frac{A+\pi\I}4} \right|.
 $$

Combining these two limits with (\ref{eqn:SplitSecondTermDq4m+3}) then gives the expected limit 
\begin{equation*}
  \lim_{m\to \infty} 
\sum_{j=1}^m \frac{m-j+1}{4m+3}
\log\frac{ \left|   \E^{\frac { - A + 4\pi\I j + \pi\I }{4m+3}  }   -1 \right|   \left|  \E^{\frac { - \bar A + 4\pi\I j - \pi\I }{4m+3}  }  -1\right| }
{ \left|   \E^{\frac { -\bar  A + 4\pi\I j + \pi\I }{4m+3}  }  -1 \right|  \left|  \E^{\frac { - A + 4\pi\I j - \pi\I }{4m+3}  }  -1\right| }
=
 \frac14 \log\left|  \frac {(A+\pi \I)  \sinh  \frac{A-\pi \I}{4} } { (A-\pi \I)  \sinh \frac{A+\pi \I}{4} } \right|.
 \qedhere
\end{equation*}
\end{proof}

\subsection{The limit of $\left| D^q(u) \right|^{\frac1n}$}

We can now determine the limit of the terms $\left| D^q(u) \right|^{\frac1n}$ as $n$ tends to $\infty$ in a given congruence class modulo 4. 

\begin{prop}
\label{prop:AsympDq}
Let $A\in \C$ be given, with $\E^A \neq -1$. For every odd $n$, set $q = \E^{\frac{2\pi\I}n}$ and $u=q \E^{\frac1n A}$. Then,
\begin{align*}
  \lim_{\substack{n\to \infty \\ n=1\, \mathrm{mod}\, 4}}  \left| D^q(u )  \right| ^{\frac1n}
 &=2^{-\frac{\Im A}{4\pi}} \left| \frac{\cosh\frac{A-\pi\I}4 }{\cosh\frac{A+\pi\I}4  }  \right|^{\frac14}
\\
 \lim_{\substack{n\to \infty \\ n=3\, \mathrm{mod}\, 4}}  \left| D^q(u )  \right| ^{\frac1n} 
 &=2^{-\frac{\Im A}{4\pi}} \left| \frac{\sinh\frac{A-\pi\I}4 }{\sinh\frac{A+\pi\I}4  }  \right|^{\frac14} .
\end{align*}
\end{prop}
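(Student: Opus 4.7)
The proposition at this stage is essentially a bookkeeping exercise: the hard analytic work has already been carried out in Lemmas~\ref{lem:AlgebraicManipDq}, \ref{lem:IsolatedTermDq4m+3}, \ref{lem:FirstSumDq}, and \ref{lem:SecondSumDq}. The plan is to just pass to logarithms via the formulas (\ref{eqn:FormulaLogDq4m+1}) and (\ref{eqn:FormulaLogDq4m+3}), invoke these limits piece by piece, and exponentiate.

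First I would treat the case $n=4m+1$. The identity (\ref{eqn:FormulaLogDq4m+1}) expresses $\log |D^q(u)|^{1/n}$ as the sum of exactly two terms. By Lemma~\ref{lem:FirstSumDq} the first term converges to $-\frac{\log 2}{4\pi}\,\Im A$, and by Lemma~\ref{lem:SecondSumDq} the second term converges to $\frac14 \log \left| \cosh\tfrac{A-\pi\I}{4}\big/\cosh\tfrac{A+\pi\I}{4} \right|$. Adding and exponentiating immediately gives the claimed formula
$$
2^{-\Im A/(4\pi)}\, \left| \frac{\cosh\frac{A-\pi\I}{4}}{\cosh\frac{A+\pi\I}{4}} \right|^{1/4}.
$$

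Next I would handle $n=4m+3$. Now (\ref{eqn:FormulaLogDq4m+3}) decomposes $\log|D^q(u)|^{1/n}$ into three terms, and the three corresponding limits are supplied by Lemmas~\ref{lem:IsolatedTermDq4m+3}, \ref{lem:FirstSumDq}, and \ref{lem:SecondSumDq}. The only thing to notice is the algebraic cancellation: since $|\bar A - \pi\I| = |\overline{A+\pi\I}| = |A+\pi\I|$, the isolated term from Lemma~\ref{lem:IsolatedTermDq4m+3} equals $\frac14 \log \frac{|A-\pi\I|}{|A+\pi\I|}$, which exactly cancels the factor $\frac14\log\frac{|A+\pi\I|}{|A-\pi\I|}$ hidden in the limit provided by Lemma~\ref{lem:SecondSumDq}. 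After this cancellation, one is left with $-\frac{\log 2}{4\pi}\,\Im A + \frac14\log\left| \sinh\tfrac{A-\pi\I}{4}\big/\sinh\tfrac{A+\pi\I}{4} \right|$, and exponentiating gives the second claim.

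I do not expect any real obstacle; every limit required has already been isolated in the preceding lemmas. The only mild care needed is to keep track of the complex conjugations (to rewrite $|\bar A - \pi\I|$ as $|A+\pi\I|$) so that the cancellation in the $n=4m+3$ case becomes visible, and to check consistency of the normalization between the formulas of Lemma~\ref{lem:AlgebraicManipDq} and the definition $u = q\,\E^{-\frac1n A}$ adopted at the start of \S\ref{sect:AsymptoticsDq} so that the sign of $\Im A$ in the factor $2^{-\Im A/(4\pi)}$ comes out correctly.
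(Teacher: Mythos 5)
Your proposal is correct and follows essentially the same route as the paper: the paper's proof likewise combines (\ref{eqn:FormulaLogDq4m+1}) and (\ref{eqn:FormulaLogDq4m+3}) with Lemmas~\ref{lem:IsolatedTermDq4m+3}, \ref{lem:FirstSumDq} and \ref{lem:SecondSumDq}, uses the same cancellation of the $\frac14\log\frac{|A-\pi\I|}{|\bar A-\pi\I|}$ term against the $\frac14\log\frac{|A+\pi\I|}{|A-\pi\I|}$ factor in the $n=3\bmod 4$ case, and exponentiates. Your closing remark about reconciling the sign convention $u=q\,\E^{-\frac1n A}$ of Lemma~\ref{lem:AlgebraicManipDq} with the statement of the proposition is a sensible check and does not affect the argument.
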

\begin{proof}
 If we let $n$ tend to $\infty$ with $n=3 \ \mathrm{mod}\ 4$, the combination of (\ref{eqn:FormulaLogDq4m+3}) with Lemmas~\ref{lem:IsolatedTermDq4m+3}, \ref{lem:FirstSumDq} and \ref{lem:SecondSumDq} shows that $\log  \left| D^q(u )  \right| ^{\frac1n} $ converges to 
 $$
 \frac14\log \frac{\left|  A - \pi\I   \right| }{\left|  \bar A - \pi\I   \right| } 
  -  \frac{\log 2}{4\pi}  \Im A 
 + \frac14 \log\left|  \frac {(A+\pi \I)  \sinh  \frac{A-\pi \I}{4} } { (A-\pi \I)  \sinh \frac{A+\pi \I}{4} } \right|
  =  -  \frac{\log 2}{4\pi}  \Im A + \frac14 \log \left| \frac{\sinh\frac{A-\pi\I}4 }{\sinh\frac{A+\pi\I}4  }  \right| .
 $$
 Therefore, $ \left| D^q(u )  \right| ^{\frac1n} $ converges to $2^{-\frac{\Im A}{4\pi}} \left| \frac{\sinh\frac{A-\pi\I}4 }{\sinh\frac{A+\pi\I}4  }  \right|^{\frac14}$. 
 
 The argument is very similar for the case where $n$ tend to $\infty$ with $n=1 \ \mathrm{mod}\ 4$.
\end{proof}

\section{The asymptotics of $\Tr \Lambda_{\phi, r}^{q_n}$}

We now have all the ingredients needed to complete our asymptotic analysis of $\Tr \Lambda_{\phi, r}^{q_n}$. 

Remember from the setup of \S \ref{sect:ComputeTrace} that we are considering  the diffeomorphism $\phi=LR=\left(
\begin{smallmatrix}
 2&1\\1&1
\end{smallmatrix}
 \right)$ of the one-puncture torus $S_{1,1}$. 
Let $[r]\in \XX(S_{1,1})$ be a $\phi$--invariant character associated to the periodic edge weight system $(a_0, b_0, c_0)$, $(a_1, b_1, c_1)$, $(a_2, b_2, c_2)=(a_0, b_0, c_0)$, and let  $\theta_v\in \C$ be such that $\E^{\theta_v}= a_0 b_0 c_0$. A choice of ``logarithms''  $A_1$, $A_2$, $V_1$, $V_2 \in \C$ as in  \S \ref{sect:ComputeTrace}  determines correction factors $\widehat l_0$, $\widehat m_0$, $\widehat n_0\in \Z$. In addition, we set $U_1 = 2\pi\I - A_1$ and $U_2 = 2\pi\I - A_2$. 

For every odd integer $n$,  let $\Lambda_{\phi, r}^q$ be the intertwiner to the data of $\phi$, $[r]$, $q=\E^{\frac{2\pi\I}n}$ and $p_v =  \E^{\frac1n \theta_v}  +\E^{-\frac1n \theta_v} $, as in Conjecture~{\upshape \ref{con:Conjecture}}. 
The following estimate immediately follows by application of Propositions~\ref{prop:AsymptoticsSn} and \ref{prop:AsympDq} to the expression of $\Tr \Lambda_{\phi, r}^{q_n}$ given in (\ref{eqn:FormulaTrace}). 

\begin{thm}
 \label{thm:MainTheorem}
For the above data, 
$$
\left| \Tr \Lambda_{\phi, r}^{q_n} \right| \asymp 
\frac{ \Big\lvert c_n(U_1, V_1, \widehat l_0) \Big\rvert \Big\lvert c_n(U_2, V_2, \widehat m_0) \Big\rvert}{ d_n(A_1) d_n(A_2)}\ 
\E^{\frac n\pi \Lambda \left( \frac\pi6 \right)}
$$
as $n$ odd tends to $\infty$, where
$$
\left| c_n(U,V, \widehat k) \right|=
\begin{cases}
 3^{-\frac14}
2^{\frac{\Im U-2\pi}{4\pi}}\ 
 \E^{ - \frac {\Re V}{6}}  
 \E^{\frac{\Re U}{12}}
&\text{ if } \widehat k \text{ is odd,}
 \\
 3^{-\frac14}
2^{\frac{\Im U-2\pi}{4\pi}}\ 
  \E^{ - \frac {2 \Re V}{3}}\ 
  \E^{\frac{\Re U}{12}}\ 
\left| 1 - \I^n \E^{ \frac 12 U }  \right|
&\text{ if } \widehat k \text{ is even},
\end{cases}
$$
and
\begin{equation*}
d_n(A)
=
\begin{cases}
2^{-\frac{\Im A}{4\pi}} \left| \frac{\cosh\frac{A-\pi\I}4 }{\cosh\frac{A+\pi\I}4  }  \right|^{\frac14}
&\text{ if } n=1\mod 4
\\
2^{-\frac{\Im A}{4\pi}} \left| \frac{\sinh\frac{A-\pi\I}4 }{\sinh\frac{A+\pi\I}4  }  \right|^{\frac14}
&\text{ if } n=3\mod 4
\end{cases}
\end{equation*}
depend only on $n$ modulo $4$. \qed
\end{thm}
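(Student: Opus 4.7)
The plan is to derive Theorem~\ref{thm:MainTheorem} as a direct combination of Propositions~\ref{prop:AsymptoticsSn} and \ref{prop:AsympDq} with the factored formula (\ref{eqn:FormulaTrace}). First I would take moduli in (\ref{eqn:FormulaTrace}) to write
$$
\bigl|\Tr \Lambda_{\phi,r}^{q_n}\bigr|
=
\frac{\bigl|\Sigma_n^{(1)}\bigr|\,\bigl|\Sigma_n^{(2)}\bigr|}
{n\,\bigl|D^{q_n}(u_1)\bigr|^{\frac1n}\,\bigl|D^{q_n}(u_2)\bigr|^{\frac1n}},
$$
where $u_k = q_n \E^{-\frac1n A_k}$ and $\Sigma_n^{(k)} = \sum_{i=1}^n \QDL^{q_n}(u_k, v_k \vbar 2i)\,q_n^{2i^2 - \widehat k_0 i}$, with $\widehat k_0 = \widehat l_0$ for $k=1$ and $\widehat k_0 = \widehat m_0$ for $k=2$. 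The four asymptotic estimates supplied by our two propositions apply directly to these four factors.

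Next I would apply Proposition~\ref{prop:AsymptoticsSn} to each $\Sigma_n^{(k)}$, using the parameter identification $U_k = 2\pi\I - A_k$ already noted at the end of Section~\ref{sect:ComputeTrace}, to obtain $\bigl|\Sigma_n^{(k)}\bigr| \asymp \bigl|c_n(U_k, V_k, \widehat k_0)\bigr|\,\sqrt n\,\E^{\frac n{2\pi}\Lambda(\frac\pi6)}$. Multiplying these two asymptotics, the product $\sqrt n \cdot \sqrt n = n$ cancels exactly against the $\frac1n$ in the denominator, and the two exponentials merge into a single $\E^{\frac n\pi \Lambda(\frac\pi6)}$. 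Simultaneously, Proposition~\ref{prop:AsympDq} gives $\bigl|D^{q_n}(u_k)\bigr|^{\frac1n} \to d_n(A_k)$ along each congruence class of $n$ modulo~$4$, with the convention $u = q\E^{-\frac1n A}$ from Lemma~\ref{lem:AlgebraicManipDq} matching $u_k$ at $A = A_k$. Assembling these four estimates produces exactly the claimed asymptotic equivalence.

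The last task is to render $|c_n(U,V,\widehat k)|$ explicitly from Proposition~\ref{prop:AsymptoticsSn}. This is routine: all unimodular phases $\E^{-\widehat k\pi\I/6}$, $\E^{-n\pi\I/4}$, $\E^{-2\widehat k\pi\I/3}$ drop out; the modulus of $2^{(U-2\pi\I)/(4\pi\I)}$ equals $2^{(\Im U - 2\pi)/(4\pi)}$ via $|2^z| = 2^{\Re z}$; the factor $\bigl((1 - \I\sqrt 3)/2\bigr)^{(2\pi\I - U)/(4\pi\I)}$, written through $(1 - \I\sqrt 3)/2 = \E^{-\I\pi/3}$, contributes modulus $\E^{\Re U/12}$; and $\E^{-V/6}$ or $\E^{-2V/3}$ yield $\E^{-\Re V/6}$ or $\E^{-2\Re V/3}$ according to parity of $\widehat k$.

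There is no genuine obstacle remaining at this stage; all of the difficulty has been absorbed into the saddle-point-style analysis of Section~\ref{sect:AsymptoticsSn} and the Euler-product manipulations of Section~\ref{sect:AsymptoticsDq}. The only point that requires mild attention is the interplay between the two congruence dependencies: for even $\widehat k$, $|c_n|$ depends on $n$ modulo~$4$ through the factor $\bigl|1 - \I^n \E^{\frac12 U}\bigr|$, while $d_n$ depends on $n$ modulo~$4$ through the cosh/sinh dichotomy. Together, these two congruence dependencies produce the two-mode convergence $(K_1, K_3)$ announced in the introduction.
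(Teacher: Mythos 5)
Your proposal is correct and follows essentially the same route as the paper, whose proof of Theorem~\ref{thm:MainTheorem} is exactly the direct combination of Propositions~\ref{prop:AsymptoticsSn} and \ref{prop:AsympDq} with the factored formula (\ref{eqn:FormulaTrace}), with the $\sqrt n\cdot\sqrt n$ cancelling the $\frac1n$ and the constants' moduli computed as you do. Your explicit verification of $\bigl|2^{(U-2\pi\I)/(4\pi\I)}\bigr|=2^{(\Im U-2\pi)/(4\pi)}$ and of the $\E^{\Re U/12}$ factor is a welcome bit of detail that the paper leaves implicit.
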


\begin{cor}
\label{cor:VolConfForLR}
For the above data, 
  $$
 \lim_{n \,\text{odd} \,\to \infty} \frac1n \log \left| \Tr \Lambda_{\phi, r}^{q_n} \right| = \frac1{4\pi} \vol_\hyp (M_\phi), 
 $$
 where $ \vol_\hyp (M_\phi)$ is the volume of the complete hyperbolic metric of the mapping torus $M_\phi$. 
\end{cor}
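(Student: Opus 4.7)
The plan is to obtain Corollary \ref{cor:VolConfForLR} as an immediate logarithmic consequence of Theorem \ref{thm:MainTheorem}. The theorem provides the sharp asymptotic
$$
\left| \Tr \Lambda_{\phi, r}^{q_n} \right| \asymp \frac{|c_n(U_1, V_1, \widehat l_0)|\,|c_n(U_2, V_2, \widehat m_0)|}{d_n(A_1)\,d_n(A_2)}\ \E^{\frac{n}{\pi} \Lambda(\pi/6)},
$$
and the corollary concerns only the exponential growth rate, so nothing sharper is required.

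First I would observe that the prefactor on the right-hand side stays bounded above and away from zero as $n$ runs through odd integers. Inspecting the explicit formulas of Theorem \ref{thm:MainTheorem}, each $d_n(A_j)$ assumes only two possible values according to the residue of $n$ modulo $4$, while each $|c_n(U_j, V_j, \widehat k_j)|$ is either constant in $n$ (when $\widehat k_j$ is odd) or runs through a finite list indexed by $n \bmod 8$ (when $\widehat k_j$ is even). The only factor that could conceivably vanish is $|1 - \I^n \E^{U_j/2}|$, but for the character $[r]$ under consideration this is easily seen to be nonzero for every $n$. Consequently the prefactor ratio is $\Theta(1)$ in each residue class modulo $8$, its logarithm is $O(1)$, and dividing by $n$ kills its contribution. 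Applying $\frac{1}{n}\log$ to both sides of the asymptotic then yields
$$
\lim_{n\text{ odd}\to\infty} \frac{1}{n} \log\left|\Tr \Lambda_{\phi, r}^{q_n}\right| = \frac{1}{\pi} \Lambda\left(\tfrac{\pi}{6}\right).
$$

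The remaining, and only really substantive, obstacle is to identify this constant with $\frac{1}{4\pi}\vol_\hyp(M_\phi)$. Since the introduction records that $\vol_\hyp(M_\phi) = 6\Lambda(\pi/3)$, the required equality reduces to the classical Lobachevsky identity $2\Lambda(\pi/6) = 3\Lambda(\pi/3)$. I would derive it from the duplication formula
$$
\Lambda(2\theta) = 2\Lambda(\theta) - 2\Lambda\left(\tfrac{\pi}{2} - \theta\right),
$$
itself obtained by integrating $\log|2\sin 2t| = \log|2\sin t| + \log|2\sin(\tfrac{\pi}{2} - t)|$ from $0$ to $\phi$ (with the substitution $u = 2t$ on the left) and using $\Lambda(\pi/2) = 0$. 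Specializing to $\theta = \pi/6$ gives $\Lambda(\pi/3) = 2\Lambda(\pi/6) - 2\Lambda(\pi/3)$, which rearranges to $2\Lambda(\pi/6) = 3\Lambda(\pi/3)$ and completes the proof.
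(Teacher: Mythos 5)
Your proof is correct and follows essentially the same route as the paper's, which simply cites that $\vol_\hyp(M_\phi) = 6\Lambda(\tfrac\pi3) = 4\Lambda(\tfrac\pi6)$ as a known fact and applies Theorem~\ref{thm:MainTheorem}. The two small additions you make — explicitly verifying that the prefactor is $\Theta(1)$ along each residue class (in particular that $|1 - \I^n\E^{U_j/2}|$ does not vanish, which amounts to $\E^{A_j}\neq -1$, i.e., the running hypothesis of Proposition~\ref{prop:AsympDq}), and deriving $2\Lambda(\tfrac\pi6)=3\Lambda(\tfrac\pi3)$ from the Lobachevsky duplication formula rather than citing it — are sound and make the corollary's proof self-contained, but do not change the overall argument.
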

\begin{proof}
 This is an immediate consequence of Theorem~\ref{thm:MainTheorem}, and of the fact that the hyperbolic volume $ \vol_\hyp (M_\phi)$ is in our case equal to $6\Lambda\left( \frac\pi 3\right) = 4 \Lambda\left( \frac\pi 6\right)$. 
\end{proof}

\section{Afterword}

For the diffeomorphism $\phi = LR = \left( 
\begin{smallmatrix}
 2&1\\1&1
\end{smallmatrix}
 \right)$ of the one-puncture torus $S_{1,1}$, we greatly benefitted from the phase cancellation mentioned in Remark~\ref{rem:NoPhase}, which enabled us to analyze the contributions of the  leading terms in the double sum expression of $ \Tr \Lambda_{\phi, r}^{q_n}$ given in (\ref{eqn:FormulaTrace}). 
 
 The  left-hand side of Figure~\ref{fig:LRAndLLR} is a good illustration of the cancellations in this double sum, for a specific example where $n=801$, $A_0 \approx -0.0253997 + 34.3024 \,\I$, $B_0 \approx -2.58581 - 0.229299 \,\I$, $C_0 \approx 5.33887 - 2.45979 \,\I$.  This picture, analogous to the one we encountered for a single sum in Figure~\ref{fig:SingleSum},  provides a plot of all terms in the double sum. We can here observe a nice pattern consisting of 7 ``petals'', 6 of which occur in three pairs of two petals sitting opposite to each other. It follows from our analysis in \S \ref{sect:AsymptoticsSn} that, in such a pair of opposite petals, the contributions of the two petals essentially cancel out; at least, their sum is negligible compared to the contribution of the remaining petal that has no opposite. Note that, in this example, the leading contributions to $ \Tr \Lambda_{\phi, r}^{q_n}$ come from the smallest petal. 

\begin{figure}[htbp]

\includegraphics[width=.4\textwidth]{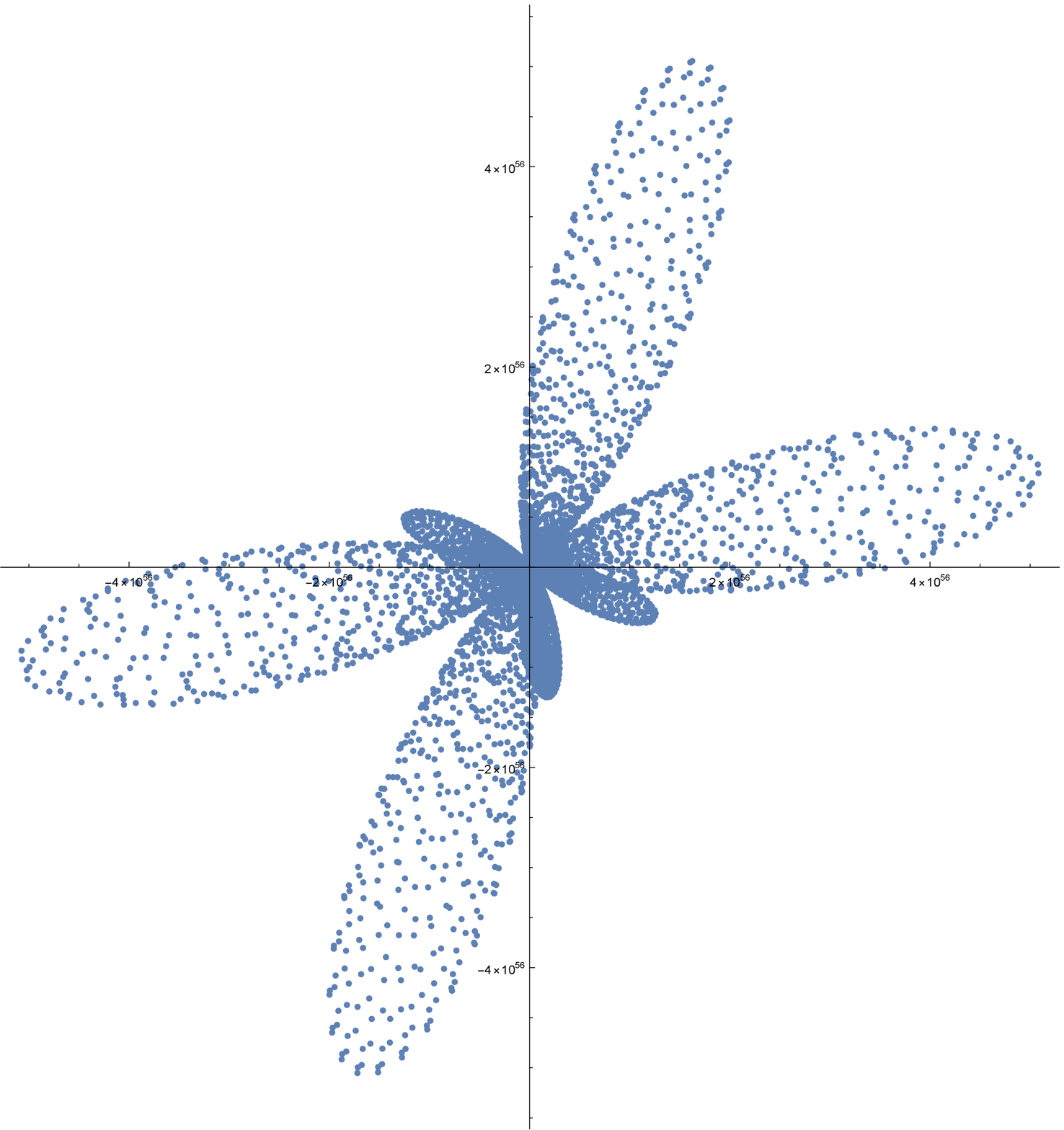}
\hskip .05\textwidth
\includegraphics[width=.4\textwidth]{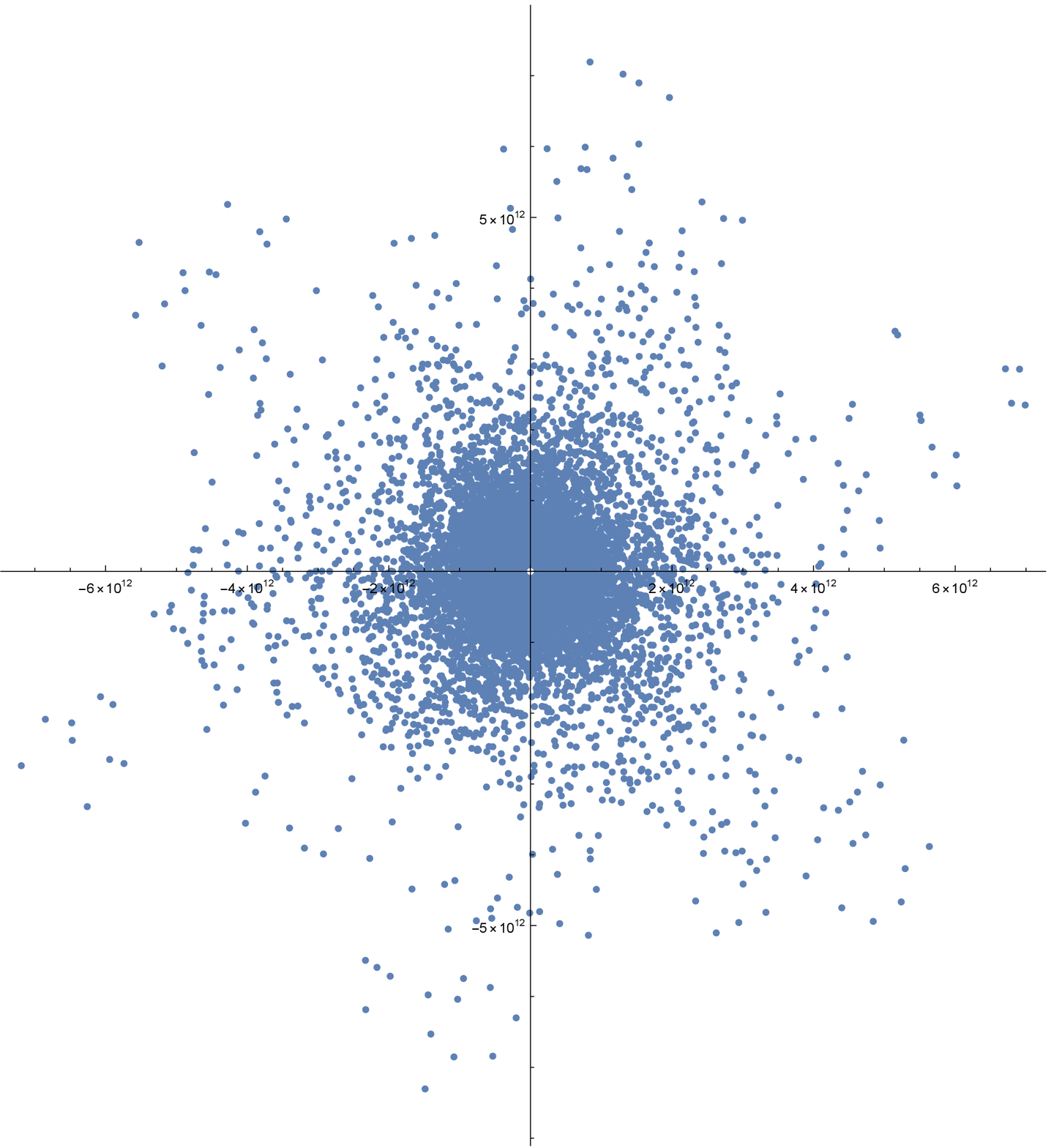}
\caption{The terms of the sums expressing $ \Tr \Lambda_{\phi, r}^{q_n}$ for explicit examples involving the diffeomorphisms $\phi = LR = \left( 
\begin{smallmatrix}
 2&1\\1&1
\end{smallmatrix}
 \right)$ and $\phi = LLR = \left( 
\begin{smallmatrix}
 3&2\\1&1
\end{smallmatrix}
 \right)$}
\label{fig:LRAndLLR}
\end{figure}

There is no nice pattern of this type for a general diffeomorphism. For instance, the right-hand side of Figure~\ref{fig:LRAndLLR} illustrates an example for the next simplest diffeomorphism $\phi = LLR = \left( 
\begin{smallmatrix}
 3&2\\1&1
\end{smallmatrix}
 \right)$, by plotting the terms of the triple sum expressing $ \Tr \Lambda_{\phi, r}^{q_n}$ in the case where $n=111$, $A_0 \approx -0.0223073 + 3.93489 \,\I$, $B_0 \approx 0.790951 + 2.38093 \,\I$, $C_0 \approx -0.42207 + 0.752766 \,\I$. It is clearly hard to see any logic in this picture. In fact,  the  elementary techniques that we used in the current paper are here ineffective, and we will need the more sophisticated harmonic analysis and differential topology methods of \cite{BWY3} to introduce some order in the chaos. Interestingly enough, this analysis will lead us to an outcome that is similar to the one we observed for $LR$, with a grouping of leading terms into blocks that, either cancel out in pairs, or add up to provide the leading contributions.

\bibliographystyle{amsalpha}
\bibliography{BWY2}
 
\end{document}